\documentclass[12pt]{amsart}
\usepackage[a4paper, left=3.1cm, right=3.1cm, bottom=3cm,top=3cm]{geometry}
\usepackage{mathscinet}
\usepackage{microtype}
\usepackage{amsmath, amsfonts, amssymb, amsthm}
\usepackage{xcolor}
\definecolor{ocean}{rgb}{0,0.5,0.5}
\definecolor{blue}{rgb}{0.00,0.26,0.50}
\usepackage[colorlinks = true, linkcolor=ocean, citecolor=blue]{hyperref}
\newtheorem{theorem}{Theorem}[section]
\newtheorem{proposition}[theorem]{Proposition}
\newtheorem{definition}[theorem]{Definition}
\newtheorem{remark}[theorem]{Remark}

\newtheorem{example}[theorem]{Example}
\newtheorem{question}[theorem]{Question}

\newtheorem{corollary}[theorem]{Corollary}
\newtheorem{lemma}[theorem]{Lemma}

\def\R{\mathbb R}  
\def\diag{{\rm diag}}

\def\Q{\mathbb Q}

\def\SL{\mathrm {SL}}

\def\<{\,<\!}
\def\>{\!>\,}

\def\R{\mathbb{R}}

\begin{document}
	
%\title[Uniform Approximate Lattices and Cartan subgroups in semisimple Lie group]{Uniform Approximate Lattices arising as a regular model set and Cartan subgroups in semisimple Lie group}

\title[Determining $\mathbb R$-Rank in Semisimple Lie Groups via uniform approximate Lattice ]{Determining $\mathbb R$-Rank in Semisimple Lie Groups via uniform approximate Lattice arising as  Regular Model Sets}
	
\author{Arunava Mandal and Shashank Vikram Singh}

\address{Department of Mathematics,
Indian Institute of Technology Roorkee,
Uttarakhand 247667, India}
\email{arunava@ma.iitr.ac.in}

\address{Department of Mathematical Sciences,
Indian Institute of Science Education and Research Mohali,
Punjab 140306, India}
\email{shashank@iisermohali.ac.in}

	\begin{abstract}
Let $G$ be a linear semisimple Lie group without compact factors. We show that uniform approximate lattices $\Lambda$ arising as regular model sets in $G$ determine the ambient group $G$ in a strong sense. Specifically, for every non-compact Cartan subgroup $C$ of $G$, there exists $g \in G$ such that the intersection $gCg^{-1} \cap \Lambda^2$
is non-empty and itself forms a uniform approximate lattice, extending a classical result of Mostow for lattices. The proof relies on a Moore-type ergodicity theorem for the hull of a strong approximate lattice, proved here as a key tool. Moreover, we prove that such approximate lattices determine the $\mathbb{R}$-rank of the ambient group $G$, drawing on ideas from the work of Prasad and Raghunathan on lattices. 
\end{abstract}

\maketitle
    \subjclass{\it 2020 Mathematics Subject Classification:} Primary 22E40, 22E15; Secondary 20N99, 22E25.

\keywords{\bf Keywords:} Uniform approximate lattice, Strong approximate lattice, Regular Model set, Cartan subgroup, Regular element, $\mathbb R$-Regular element, $\mathbb R$-rank of a semisimple Lie group.
	
\setcounter{tocdepth}{1}
\tableofcontents

\section{Introduction}
Approximate subgroups were introduced by Terence Tao \cite{Tao08} to extend ideas from additive combinatorics. A fundamental result of Breuillard, Green, and Tao \cite{BGT} shows that finite approximate subgroups admit a strong algebraic
structure. In the infinite setting, however, the class of approximate subgroups is too large to allow a comparable general structure theory. This has led to the study of more rigid subclasses, among which
\emph{uniform approximate lattices}, introduced by
Björklund and Hartnick \cite{bh18}, play a crucial role. Uniform approximate lattices are approximate subgroups of locally compact groups that are both uniformly discrete and relatively dense, and can be viewed as a non-commutative analogue of mathematical quasicrystals. In Euclidean spaces, they coincide with the classical Meyer sets, which arise from \emph{cut-and-project schemes}.
One of the central themes in the subject is to determine for which classes of groups approximate lattices in fact arise from such constructions. Some of the most significant breakthroughs in the subject have emerged from the study of such questions. Simon Machado extended Meyer’s original ideas to various classes of Lie groups, especially to nilpotent \cite{M20}, solvable \cite{M22}, and 
$S$-adic Lie groups \cite{M25S-adic}, while Hrushovski’s work \cite{h22}, from a geometrical point of view, provides deep structural results in the semisimple setting, providing an aperiodic analogue of Margulis’ arithmeticity theory, also proved independently by Machado \cite{M23} from an algebraic point of view.

In this article, our aim is to study the other way: how uniform approximate lattices reflect structural properties of linear semisimple Lie groups (without compact factors), motivated by the classical results of Mostow \cite{M70} on uniform lattices and of Prasad--Raghunathan \cite{pr} on lattices and Cartan subgroups in semisimple Lie groups. In particular, our main goal is to understand how the presence of a uniform approximate lattice determines the \emph{rank} of such groups. To approach this problem, we study the interaction between Cartan subgroups and uniform approximate lattices that arise as regular model sets in linear semisimple Lie groups. An approximate lattice that arises as a regular model set is automatically a \emph{strong approximate lattice}, a notion different from that of a uniform approximate lattice. Strong approximate lattices are defined via the existence of invariant probability measures on the hull under the natural group action. Using ergodic theory and homogeneous dynamics, we study the interaction of regular elements and Cartan subgroups with strong approximate lattices that arise as regular model sets; some of these results may also be of independent interest.

\subsection{Moore’s Ergodicity type theorem for hull of a strong approximate lattice}
Strong approximate lattices were introduced by Björklund and Hartnick \cite{bh18} as a
dynamical strengthening of approximate lattices, defined via the existence
of a $G$-invariant probability measure on the hull of the set. A large and
important class of examples arises as \emph{model sets} over
cut-and-project schemes.
More precisely, let $(G,H,\Gamma)$ be a cut-and-project scheme, where $G$
and $H$ are locally compact groups and $\Gamma$ is a lattice in $G\times H$
such that the projection of $\Gamma$ to $G$ is injective and its projection
to $H$ is dense. Given a compact subset $W\subset H$, called the
\emph{window}, the associated \emph{model set} is defined by
\[
\Lambda=\{\pi_G(\gamma): \gamma\in\Gamma,\ \pi_H(\gamma)\in W\},
\]
where $\pi_G$ and $\pi_H$ denote the natural projections from $G\times H$
onto $G$ and $H$, respectively. Equivalently, one may write
$\Lambda=P_0(\Gamma,W)$. When the window $W$ satisfies suitable regularity
conditions, the resulting set is called a \emph{regular model set} (see §2.3 for details). Such
sets are known to produce strong approximate lattices and form a central
class of examples in the theory. 

Moore's ergodicity theorem is a fundamental result in homogeneous dynamics.
It states that if $\Gamma$ is an irreducible lattice in a semisimple Lie group
$G$ without compact factors, then the action of any subgroup of $G$ with
non-compact closure on the homogeneous space $G/\Gamma$ is ergodic with
respect to the $G$-invariant probability measure. In the setting of strong
approximate lattices, a natural analogue arises through the study of the
\emph{hull} of the approximate lattice, defined as the closure of
$\{g\Lambda : g \in G\}$
in the Chabauty--Fell topology. When a strong approximate lattice arises
as a regular model set over a cut-and-project scheme, its hull admits a
unique $G$-invariant probability measure. This allows one to study the
dynamical system given by the $G$-action on the hull in a way that parallels the classical action of $G$ on $G/\Gamma$, and we obtain Moore’s Ergodicity-type theorem for the hull of a strong approximate lattice in Theorem \ref{theorem-moore’s ergodicity-type theorem for hull}. Theorem \ref{theorem-moore’s ergodicity-type theorem for hull} plays a crucial role in our work. It provides a key tool for establishing a relation between Cartan subgroups and strong approximate lattices (see Theorem \ref{theorem-non-compact-cartan}).

\begin{theorem}[Moore’s Ergodicity-type theorem for hull of a strong approximate lattice]\label{theorem-moore’s ergodicity-type theorem for hull}
    Let $G$ be a linear semisimple Lie group without a compact factor and let $\Lambda = P_0(\Gamma, W_0)$ be a regular model set over a cut-and-project scheme $(G,H,\Gamma)$, where $H$ is a linear semisimple Lie group without a compact factor and $\Gamma$ is an irreducible lattice in $G\times H$. Let $\Omega_{\Lambda}$ be the hull of $\Lambda$. 
    If $M$ is a non-compact subgroup of $G$, then it acts ergodically on the hull $\Omega_{\Lambda}$.
\end{theorem}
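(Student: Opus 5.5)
The plan is to transfer the problem from the hull $\Omega_\Lambda$ to the homogeneous space $Y=(G\times H)/\Gamma$ and there apply the classical Moore ergodicity theorem. For a regular model set, the standard theory of cut-and-project schemes provides a $G$-equivariant measurable map $\beta\colon \Omega_\Lambda\to Y$ (the ``torus parametrization''). It is continuous onto its image on the generic part, and injective off a null set. Under $\beta$, the unique $G$-invariant probability measure $\nu$ on $\Omega_\Lambda$ is pushed forward to the Haar probability measure $m_Y$. Concretely, for $(g,h)\Gamma\in Y$ one considers the set $g\,P_0(\Gamma, h^{-1}W_0)$, which equals $\pi_G\big((g,h)\Gamma\cap (G\times W_0)\big)$. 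When $\partial W_0$ has Haar measure zero in $H$ (regularity), this set lies in $\Omega_\Lambda$ for $m_Y$-almost every point. This gives a $G$-equivariant map $\iota\colon Y'\to\Omega_\Lambda$ defined on a conull $G$-invariant set $Y'\subset Y$. By uniqueness of the invariant measure on the hull, $\iota_*m_Y=\nu$. I will take these facts, namely unique ergodicity of the hull and the measure isomorphism $(\Omega_\Lambda,\nu)\cong(Y,m_Y)$ as $G$-spaces, from the regular model set theory recalled in §2.3 (Björklund--Hartnick, Schlottmann). If the paper only gives them in a weaker form, I will reprove the almost-everywhere injectivity from the aperiodicity of $W_0$ implicit in regularity.

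Granting this, the argument is short. Let $A\subset\Omega_\Lambda$ be a measurable $M$-invariant set. Then $\iota^{-1}(A)\subset Y'$ is $M$-invariant, where $M$ is viewed as the subgroup $M\times\{e\}$ of $G\times H$, because $\iota$ is $G$-equivariant. Moreover $m_Y(\iota^{-1}(A))=\nu(A)$. The closure of $M\times\{e\}$ in $G\times H$ is non-compact, since $M$ is non-compact in $G$; to be safe, replace $M$ by its closure, which changes neither the invariance nor the non-compactness. Also, $G\times H$ is a linear semisimple Lie group without compact factors, and $\Gamma$ is irreducible in it. Moore's ergodicity theorem (in the form quoted in the introduction, valid for irreducible lattices in semisimple groups without compact factors) therefore says that $M\times\{e\}$ acts ergodically on $(Y,m_Y)$. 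Hence $m_Y(\iota^{-1}(A))\in\{0,1\}$, so $\nu(A)\in\{0,1\}$.

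One must interpret the irreducibility hypothesis correctly. Moore's theorem for a non-compact closed subgroup $L$ requires that $L$ have non-compact projection to every simple factor through which $\Gamma$ does not split. Since $\Gamma$ is irreducible in $G\times H$ as a whole, it suffices that $L$ be non-compact, and $M\times\{e\}$ is non-compact. If one worries that $G$ itself has several simple factors and $M$ is non-compact only in some of them, this is still fine. Irreducibility means that every proper normal factor $N$ of $G\times H$ acts ergodically, hence by Howe--Moore matrix coefficient decay on $L^2_0(Y)$ applied factor by factor, any non-compact subgroup does. I will cite the Howe--Moore theorem for $L^2_0((G\times H)/\Gamma)$, which needs exactly that no simple factor has nonzero invariant vectors, and that is equivalent to irreducibility.

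The main obstacle is the first step: establishing that $\iota$ is a measure-theoretic isomorphism, or at least a factor map with $\iota_*m_Y=\nu$ in the right direction. Only the factor direction $Y\to\Omega_\Lambda$ is needed for the argument above, since ergodicity passes to factors. So I will aim just to show that $\iota$ is defined, measurable and equivariant on a conull $G$-invariant set, and that $\iota_*m_Y$ is $G$-invariant; unique ergodicity of the hull then identifies it with $\nu$. Measurability follows by writing $\iota$ as a pointwise limit of continuous maps into the Chabauty--Fell space on the set of points $y$ with $y\cap (G\times\partial W_0)=\emptyset$. This set is conull because $\partial W_0$ is Haar-null, as one checks by a Siegel--Weil type integral formula over $Y$.
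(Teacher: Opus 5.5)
Your proposal is correct and is essentially the paper's argument: you transfer the problem to $Y=(G\times H)/\Gamma$ through the $G$-equivariant correspondence between the punctured hull and $Y$, then apply Moore's ergodicity theorem (via Howe--Moore and irreducibility of $\Gamma$) to $M\times\{e\}$. The only difference is the direction of the map: the paper pushes the $M$-invariant set $Z\cap X^{ns}$ forward along $\beta$, using that $\beta|_{X^{ns}}$ is a bijection onto the conull set $Y^{ns}$, whereas you pull back along the parametrization $Y'\to\Omega_\Lambda$ (essentially $\beta^{-1}$ on the generic set), which only needs to be a factor map with $\iota_*m_Y=\nu$.
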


\subsection{Strong approximate lattice and regular elements}
Let $G$ be a connected semisimple linear Lie group without compact factors. 
A classical theorem of Prasad and Raghunathan asserts that if $\Gamma$ is a lattice in $G$, 
then $\Gamma$ contains a \emph{regular element}. Recall that an element $g \in G$ is called 
\emph{regular} if its centralizer in $G$ has minimal dimension; equivalently, $g$ lies in a 
unique Cartan subgroup of $G$. Regular elements play a fundamental role in the structure 
theory of semisimple groups and in the study of lattices, as they detect Cartan 
subgroups and provide important information about the algebraic and dynamical properties 
of the group.
In Theorem \ref{theorem-regular element-strong approximate lattice}, we extend the theorem of Prasad and Raghunathan from lattices to the broader 
setting of \emph{strong approximate lattices}. More precisely, we prove that if 
$\Lambda \subset G$ is a strong approximate lattice, then $\Lambda^2$ contains a regular 
element. 
%Our proof relies on dynamical properties of the hull associated with a strong approximate lattice. 
We will use Theorem \ref{theorem-regular element-strong approximate lattice} in the proof of Theorem \ref{theorem-non-compact-cartan} and in later sections.

\begin{theorem}\label{theorem-regular element-strong approximate lattice}
 Let $G$ be a linear semisimple Lie group without a compact factor. Let $\Lambda$ be an irreducible strong approximate Lattice in $G$. Then $\Lambda^2$ contains a regular element.   
\end{theorem}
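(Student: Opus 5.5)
The plan is to adapt the Prasad--Raghunathan argument: find an open set of regular elements in $G$, then show that $\Lambda^2$ must meet it. We use the recurrence supplied by the invariant probability measure on the hull $\Omega_\Lambda$. Since the regular elements are Zariski open and dense, it is enough to produce an element of $\Lambda^{-1}\Lambda$ lying in a suitable set of regular elements. Note that $\Lambda^{-1}\Lambda\subset\Lambda^2$, as $\Lambda$ is symmetric for an approximate subgroup.

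First fix a regular element, say $a\in A$, where $A$ is the split part of a maximal $\R$-split Cartan subgroup. Take $a$ $\R$-regular, so that $\langle a\rangle$ is non-compact. The key point is a perturbation lemma: there is a relatively compact neighbourhood $U$ of the identity such that every element of $U a^n U$ is regular for all sufficiently large $n$. To see this, write $g\in Ua^nU$ as $u_1a^nu_2$, which is conjugate to $a^n u_2u_1$. For $n$ large, $a^n$ is very regular: its eigenvalues in each representation are well separated. Eigenvalue separation is stable under bounded multiplicative perturbation, so $a^n u$ with $u\in U^2$ has distinct eigenvalues on the adjoint representation, restricted to the directions not in the Cartan. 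Hence it is regular. This is the standard step in Prasad--Raghunathan and carries over verbatim, since it involves only $G$.

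Second, use recurrence. Let $\nu$ be the $G$-invariant probability measure on $\Omega_\Lambda$ from the strong approximate lattice hypothesis. Consider $\mathcal{U}=\{P\in\Omega_\Lambda : P\cap U\neq\emptyset\}$. By relative density of elements of the hull, finitely many translates $g_i\mathcal U$ cover $\Omega_\Lambda$, so $\nu(g\mathcal U)>0$ for some $g$; replacing $U$ by a conjugate, we may assume $\nu(\mathcal U)>0$. Poincaré recurrence for $a$ acting on $(\Omega_\Lambda,\nu)$ then gives infinitely many $n$ with $\nu(\mathcal U\cap a^{-n}\mathcal U)>0$. For such $n$, pick $P$ in this intersection with $P\in$ the orbit closure, and elements $x,y\in P$ with $x\in U$ and $a^n y\in U$. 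This yields $a^n\in Uy^{-1}\cdots$; more precisely, $y x^{-1}\in a^{-n}U U^{-1}$, which is regular. Any $P\in\Omega_\Lambda$ satisfies $P^{-1}P\subset\overline{\Lambda^{-1}\Lambda}=\Lambda^{-1}\Lambda$, because $\Lambda^2$ is uniformly discrete, hence closed, and the Chabauty--Fell limits of translates $g\Lambda$ have difference sets inside $\Lambda^{-1}\Lambda$. So $yx^{-1}$, or its inverse, lies in $\Lambda^2$ and is regular. Irreducibility is used only so that we may take $a$ in any simple factor direction without issue, or to invoke ergodicity (Theorem~\ref{theorem-moore’s ergodicity-type theorem for hull} in the model set case) to ensure recurrence is genuinely realized by infinitely many large $n$. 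Plain Poincaré recurrence already gives this.

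The main obstacle is step one's uniformity: we need a single neighbourhood $U$ (with $UU^{-1}$ in place of $U$) working for all large $n$, including the conjugation issue when $U$ is replaced by $g^{-1}Ug$. I would handle this by choosing the conjugate first and then applying the eigenvalue-gap estimate to $a^n$ times a fixed compact set. A second subtlety is checking that $P^{-1}P\subset\Lambda^{-1}\Lambda$ for hull elements, using closedness of the uniformly discrete set $\Lambda^{-1}\Lambda$ in the Chabauty--Fell topology.
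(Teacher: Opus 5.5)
Your architecture is the paper's: a set $Ua^nU$ of regular elements obtained via \cite[Proposition 2.2]{pr}, combined with Poincar\'e recurrence on $\mathcal{O}_U\cap\Omega_\Lambda$ as in Lemma~\ref{lemma-two topology}. However, step one fails in general, because an element of $A$ is usually not regular. Indeed, $\mathrm{Ad}(a)$ is the identity on $\mathfrak{m}\oplus\mathfrak{a}$, where $\mathfrak{m}=\mathfrak{z}_{\mathfrak{k}}(\mathfrak{a})$, so $n_1(a)\ge\dim\mathfrak{m}+r$. Regular elements have $n_1=\dim\mathfrak{t}+r$, with $\mathfrak{t}$ a maximal torus of $\mathfrak{m}$.

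So unless $\mathfrak{m}$ is abelian (i.e.\ $G$ is quasi-split), there is no regular $a\in A$. For an $\mathbb{R}$-regular $a\in A$ your perturbation lemma is then false, since $a^n\in Ua^nU$ is itself non-regular. A concrete case is $G=\mathrm{SO}_0(4,1)$, where $\mathfrak{m}=\mathfrak{so}(3)$. The eigenvalue gaps of $\mathrm{Ad}(a^n)$ separate restricted root spaces but say nothing on $\mathfrak{m}$. Even in the quasi-split case, ``distinct eigenvalues'' is not the right criterion, since restricted root spaces can be multi-dimensional. What is actually used is the conjugacy $Ua^nU\subset\bigcup_g gVa^ng^{-1}$, with $V$ a neighbourhood of $e$ in the Cartan subgroup.

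The paper's proof makes the same choice (``fix a regular element $a\in A$''), so as written it also covers only the quasi-split case. The proof of Theorem~\ref{theorem-non-compact-cartan} repairs this by using $ta$ with $t$ in the compact part $T$. But getting a point of $\Lambda^2$ in $Uta^nU$ then requires moving from $\mathcal{O}_U$ to $\mathcal{O}_{t^{-1}U}$ under $a^n$. That is ergodicity, which the paper has only for regular model sets (Section 3). Recurrence of $ta$ itself gives $(ta)^n=t^na^n$, with nothing keeping $t^n$ away from the singular set of $T$.

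For an arbitrary strong approximate lattice, the clean fix is the one you half-state at the start. Regular elements form a nonempty Zariski-open subset of $G$, and $\Lambda$ is Zariski dense by \cite[Theorem 3]{bhs19} (the paper uses this in Lemma~\ref{lemma-generating regular and R-regular elements}). Hence $\Lambda\subset\Lambda^2$ already contains a regular element, with no dynamics needed.

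There are two smaller problems in your recurrence step.
\begin{enumerate}
\item The element $yx^{-1}$ lies in $PP^{-1}$. For $P=\lim g_k\Lambda$ this is a limit of conjugates $g_k\Lambda^2g_k^{-1}$, and it is not contained in $\Lambda^2$ (take $\Lambda$ a lattice and $P=g\Lambda$). The element you want is $x^{-1}y\in P^{-1}P\subset\Lambda^2$, which lies in $U^{-1}a^{-n}U$. This is why the perturbation statement must be two-sided, exactly as in the paper's $u'^{-1}a^lu$.
\item Strong approximate lattices need not be relatively dense, and you do not need to conjugate $U$. Note that $g\mathcal{U}=\{Q\in\Omega_\Lambda : Q\cap gU\neq\varnothing\}$, and countably many such left translates cover $\Omega_\Lambda\setminus\{\varnothing\}$. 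Properness of $\nu$ and $G$-invariance then give $\nu(\mathcal{U})>0$ directly. You are right to insist on this positivity: the paper's Lemma~\ref{lemma-two topology} only observes that $\mathcal{O}_U(\Omega_\Lambda)$ is a nonempty open set, which by itself does not give positive measure.
\end{enumerate}
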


To prove Theorem \ref{theorem-regular element-strong approximate lattice}, a key ingredient is 
Lemma \ref{lemma-two topology}, which can be interpreted as a recurrence phenomenon detected 
through local neighborhoods. In the classical setting, when $G$ is a connected Lie group and 
$\Gamma$ is a lattice in $G$, the group $G$ acts on the homogeneous space $G/\Gamma$. 
In our situation, when $\Lambda$ is a strong approximate lattice, the group $G$ instead acts 
on the hull $\Omega_\Lambda$, endowed with the Chabauty--Fell topology. By making use of this 
topology on the hull together with Poincaré recurrence, we show in 
Lemma \ref{lemma-two topology} that for every $a \in G$ and every identity neighborhood 
$U \subset G$, one has
$Ua^lU \cap \Lambda \neq \varnothing$ for infinitely many integers $l$. This lemma is the crucial step in the proof.

\subsection{Strong approximate lattice and non-compact Cartan subgroups}
A classical result of Prasad and Raghunathan \cite{pr} states that, in a connected semisimple Lie group $G$,  suitable conjugates of a Cartan subgroup $C$ intersect a lattice $\Gamma$, and in fact contain a regular element. Moreover, $gCg^{-1}\cap\Gamma$ is uniform lattice in $gCg^{-1}$; if $\Gamma$ is uniform, it was proved by Mostow.
A Cartan subgroup $C$ of a connected Lie group $G$ is a maximal nilpotent subgroup $C$ of $G$ with the property that 
if $L$ is any closed normal subgroup of finite index in $C$, then $L$ has finite index in 
its normalizer in $G$. For algebraic groups, Cartan subgroups are the centralizers of their maximal tori. 
For a semisimple algebraic group, they coincide with maximal tori. For more detailed and recent results on the structure of Cartan subgroups in Lie groups, we refer \cite{MS21}, \cite{MS26}, \cite{KMS26}, and their applications see \cite{DM17}, \cite{BM18}, \cite{KM26}, for instance.

In Theorem \ref{theorem-non-compact-cartan}, we establish an analogue of the result stated above for strong approximate lattices arising as regular model sets. More precisely, we show that every non-compact Cartan subgroup of a linear semisimple Lie group without compact factors intersects a strong approximate lattice and, moreover, contains a regular element.
Our proof relies crucially on ergodic techniques for the $G$-action on the hull of a strong approximate lattice, described in Theorem \ref{theorem-moore’s ergodicity-type theorem for hull}. This requirement motivates the additional assumption that the strong approximate lattice arises as a regular model set. The argument of Prasad and Raghunathan inspires the general strategy; however, in our setting, it is necessary to relate two different topologies: the Chabauty–Fell topology on the hull and the usual topology on $G$. 
%(cf. Lemma \ref{lemma-two topology}).

%Results of this type provide further evidence that strong approximate lattices share deep structural features with lattices, and they contribute to the growing parallel between classical rigidity phenomena in Lie groups and their counterparts in the theory of approximate lattices.

\begin{theorem}\label{theorem-non-compact-cartan}
  Let $G$ be a linear semisimple Lie group without a compact factor and let $\Lambda = P_0(\Gamma, W_0)$ be a regular model set over a cut-and-project scheme $(G,H,\Gamma)$, where $H$ is a linear semisimple Lie group without a compact factor and $\Gamma$ is an irreducible lattice in $G\times H$. Let $C$ be a non-compact Cartan of $G$ and $C^0$ be its connected component of the identity.
   Then we have the following:
   \begin{enumerate}
       \item $\Lambda^2\cap gC^0g^{-1}\neq \emptyset$ for some $g\in G$.

       \item If $\Gamma$ is uniform, then $\Lambda^2 \cap gCg^{-1}$ is an uniform approximate lattice in $gCg^{-1}$.

        \item If $\Gamma$ is uniform, then $\Lambda^2 \cap gC^0g^{-1}$ is an uniform approximate lattice in $gC^0g^{-1}$. In particular, $\Lambda^2 \cap gC^0g^{-1}$ is a Meyer set.

   \end{enumerate}
  \end{theorem}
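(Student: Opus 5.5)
We follow the Prasad--Raghunathan strategy, with the ergodicity of Theorem \ref{theorem-moore’s ergodicity-type theorem for hull} replacing Moore's theorem on $G/\Gamma$.

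For (1), write $C^0=TA$, with $A$ the split part and $T$ the compact part. Since $C$ is non-compact, $A$ is a non-trivial vector group. Choose $a\in A$ which is $\mathbb R$-regular in $A$, i.e.\ the closure of $\langle a\rangle$ is non-compact and its centralizer has the same identity component as that of $A$. Let $G_{\rm reg}\cap C^0$ denote the regular elements in $C^0$. The map
\[
\phi: G\times (C^0)_{\rm reg}\to G,\qquad (g,c)\mapsto gcg^{-1},
\]
is a submersion, so its image $\mathcal O$ is open, and $\mathcal O$ is invariant under conjugation. By Theorem \ref{theorem-moore’s ergodicity-type theorem for hull}, the non-compact cyclic group $\langle a\rangle$ acts ergodically on $(\Omega_\Lambda,\mu)$. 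Combined with Poincaré recurrence, in the form of Lemma \ref{lemma-two topology}, this gives: for every identity neighbourhood $U$ there are infinitely many $l$ with $Ua^lU\cap\Lambda\neq\emptyset$. We use ergodicity to make this hold for a.e.\ point of the hull, and hence for a translate $g\Lambda\in\Omega_\Lambda$. Pulling back to $\Lambda$ then costs one extra factor, which is why the conclusion is about $\Lambda^2$.

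The main obstacle is to choose $U$ so that every element of $Ua^lU$, for large $l$, lies in a conjugate of $C^0$ by an element close to the identity. Our first attempt uses the Prasad--Raghunathan contraction lemma. Since $a$ is regular in $A$, for $u,v\in U$ we write $ua^lv=u(a^lva^{-l})a^l$. After conjugating, the element $ua^lv$ has eigenstructure governed by $a^l$ on the expanding and contracting directions and by a near-identity element of $Z(A)$ on the rest. A hyperbolic fixed-point argument then puts it in $h\,Z_G(A)\,h^{-1}$ with $h$ near $1$. Inside $Z_G(A)$, we invoke Theorem \ref{theorem-regular element-strong approximate lattice} (regular elements in $\Lambda^2$), applied to open sets of regular elements, to land in a conjugate of $C^0$. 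If this proves too delicate, we instead fix the open set $V=\phi(W\times B)$, where $W$ is a small neighbourhood of $1$ and $B\subset C^0_{\rm reg}$ is a neighbourhood of $a$. We then apply recurrence to the measure-positive set $\{\omega\in\Omega_\Lambda:\omega\cap V\neq\emptyset\}$, using that the hull of a regular model set has full-measure set of points of the form $gP_0(\Gamma,W_0)$ after a generic translation. Either way we obtain $\lambda\in\Lambda^2$ with $\lambda=gcg^{-1}$, $c\in C^0$ regular, for a suitable $g$.

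For (2) and (3), assume $\Gamma$ is uniform, and put $C'=gCg^{-1}$. Lift $\lambda$ to $\gamma\in\Gamma$ with $\pi_G(\gamma)=\lambda$. Regularity of $\lambda$ gives $C'=Z_G(\lambda)$ up to finite index; similarly $Z_{G\times H}(\gamma)$ has first factor $C'$. Since $\Gamma$ is a uniform lattice, Mostow's theorem (or Selberg's argument that centralizers of elements of a uniform lattice meet it cocompactly) shows $Z_{G\times H}(\gamma)\cap\Gamma$ is a uniform lattice in $Z_{G\times H}(\gamma)=C'\times Z_H(\pi_H\gamma)$. Projecting, with a window containing $W_0^{-1}W_0$ (which is how $\Lambda^2$ arises), $\Lambda^2\cap C'$ contains a model set over the scheme $(C',Z_H(\pi_H\gamma),\Gamma\cap Z(\gamma))$. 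That model set is relatively dense in $C'$, provided the $H$-projection restricted to the window has non-empty interior; here regularity of $W_0$ and the density of the projection are used. Uniform discreteness and the approximate-subgroup property of $\Lambda^2\cap C'$ are inherited from $\Lambda^2\subset\Lambda^4$, a finite union of translates of $\Lambda$. The $C^0$ statement follows because $C'^0$ has finite index in $C'$, since $C$ is algebraic. Finally, $C'^0$ is abelian and isomorphic to a compact torus times $\R^k$, so the approximate lattice projects to one in $\R^k$, and Meyer's theorem identifies it as a Meyer set.
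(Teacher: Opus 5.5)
\textbf{The gap is in part (1)}, at exactly the step you call the main obstacle.

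An element $a\in A$, however generic, need not be regular in $G$. As soon as some root of $(\mathfrak g_{\mathbb C},\mathfrak c_{\mathbb C})$ vanishes on $\mathfrak a$, one has $Z_G(A)^0\supsetneq C^0$. Then no neighbourhood $Ua^lU$ lies in $\bigcup_g gC^0g^{-1}$. Concretely, in $\SL_3(\R)$ take $C^0=SO(2)\times A$ with $A=\{\diag(e^s,e^s,e^{-2s})\}$. Every element of $C^0$ has eigenvalues $e^{s}e^{\pm i\theta},e^{-2s}$, so if all its eigenvalues are real, one of them is repeated. By contrast, $\diag(e^{\epsilon},e^{-\epsilon},1)\,a^l\in Ua^lU$ has three distinct real eigenvalues for small $\epsilon\neq 0$. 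So recurrence near $a^l$ (Lemma~\ref{lemma-two topology}) cannot by itself land you in a conjugate of $C^0$.

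Your repair does not close this. Theorem~\ref{theorem-regular element-strong approximate lattice} concerns strong approximate lattices in a semisimple group, and nothing says $\Lambda^2\cap hZ_G(A)h^{-1}$ is such an object in the reductive group $Z_G(A)$. That theorem also gives no control over which conjugacy class of Cartan subgroups the regular element lies in; its proof only produces conjugates of elements of a maximally split Cartan. The fallback fails as well. $B\subset (C^0)_{\rm reg}$ cannot be a neighbourhood of a non-regular $a$. Recurrence of the single set $\{\omega:\omega\cap V\neq\emptyset\}$ only yields elements $v'^{-1}a^lv\in\Lambda^2$ with $v,v'$ ranging over the large conjugation-invariant set $V$, and nothing can be concluded about them. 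Note also that ergodicity is never genuinely used in your sketch; recurrence alone already gives Lemma~\ref{lemma-two topology}.

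\textbf{The missing idea is to aim at $ta^l$ rather than $a^l$.}

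Choose $t\in T$ with $\chi(t)\neq\pm1$ for every root $\chi$ trivial on $A$, and $a\in A$ with $\chi(a)\neq 1$ for the remaining roots. Then $ta^l$ is regular for all $l\neq 0$, and \cite[Proposition 2.2]{pr} gives a symmetric $U$ with $Uta^lU\subset\bigcup_g gVta^lg^{-1}$, where $V$ is a neighbourhood of $e$ in $C^0$. Producing points of $\Lambda^2$ near $ta^l$ requires linking two \emph{different} sets in the hull. Recurrence cannot do this; ergodicity can. Since $\langle a\rangle$ acts ergodically (Theorem~1.1), the set $\bigcup_{l\geq 1}a^{-l}\mathcal O_{t^{-1}U}(\Omega_\Lambda)$ is conull. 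Hence there is $Q\in\mathcal O_U(\Omega_\Lambda)$ with $a^lQ\in\mathcal O_{t^{-1}U}(\Omega_\Lambda)$ for some $l\geq 1$. Unwinding Chabauty--Fell convergence then gives $u'^{-1}ta^lu\in\Lambda^2\cap Uta^lU$. This is the paper's argument.

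\textbf{On (2) and (3).} Your Selberg-type route is a reasonable self-contained alternative to the paper's appeal to \cite[Corollary 6.6]{M25}. It uses cocompactness of $\Gamma\cap Z_{G\times H}(\gamma)$ in $Z_{G\times H}(\gamma)$ for uniform $\Gamma$, then sandwiches $\Lambda^2\cap C'$ between model sets of the restricted scheme. Two points need correcting.
\begin{enumerate}
\item One has $\Lambda^2\subseteq P_0(\Gamma,W_0^2)$. What $\Lambda^2$ contains, by density of $\Gamma_H$, is the model set with any compact window inside the open identity neighbourhood $\mathrm{int}(W_0)^{-1}\mathrm{int}(W_0)$. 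It does not contain one whose window contains $W_0^{-1}W_0$.
\item $Z_H(\pi_H\gamma)$ must be replaced by the closure of the $H$-projection of $\Gamma\cap Z_{G\times H}(\gamma)$ to get a genuine cut-and-project scheme.
\end{enumerate}
Both parts still rest on the regular element from (1).
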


   The
$\mathbb{R}$-\emph{rank} of a semisimple Lie group $G$ is defined as the dimension of a
maximal $\mathbb{R}$-split torus in $G$. A theorem of Prasad and Raghunathan shows
that if $\Gamma$ is a lattice in $G$, then $\Gamma$ contains a free abelian subgroup
of rank $r=\mathbb{R}\text{-rank}(G)$ \cite{pr}. Thus the lattice necessarily contains a
subgroup isomorphic to $\mathbb{Z}^r$, where $r$ is determined by the $\mathbb{R}$-rank
of the ambient group. It is natural to ask whether an analogous structural property
holds for approximate lattices. The following result shows that this is indeed the
case for uniform approximate lattices arising as regular model sets.

\begin{corollary}\label{cor-free-abelian-group}
  Let $G$ be a linear semisimple Lie group without compact factors. Let $\Lambda$ be a
uniform approximate lattice arising as a regular model set. Then $\langle \Lambda^2
\rangle$ contains a free abelian subgroup of rank
$r=\mathbb{R}\text{-rank}(G)$.  
\end{corollary}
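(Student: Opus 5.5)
The plan is to apply Theorem \ref{theorem-non-compact-cartan} to a Cartan subgroup containing a maximal $\mathbb{R}$-split torus. Write $\Lambda = P_0(\Gamma, W_0)$ over a cut-and-project scheme $(G,H,\Gamma)$ with $\Gamma$ uniform. By the structure theory of regular model sets (Hrushovski, Machado), we may reduce to the setting of Theorem \ref{theorem-non-compact-cartan}: $H$ linear semisimple without compact factors and $\Gamma$ irreducible. Any compact factor of $H$ can be absorbed into the window, and reducibility can be handled by passing to factors. If $r=0$ there is nothing to prove, so assume $r\ge 1$.

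First choose the Cartan subgroup. Fix a maximal $\mathbb{R}$-split torus $A$ of $G$, so $\dim A = r$. Let $C$ be a Cartan subgroup containing $A$, for instance the centralizer of a maximal torus that contains $A$. Its identity component has the form $C^0 = T A$ with $T$ compact and $A \cong \mathbb{R}^r$. Since $r\ge1$, $C$ is non-compact. Theorem \ref{theorem-non-compact-cartan}(3) then gives $g\in G$ such that $\Delta := \Lambda^2\cap gC^0g^{-1}$ is a uniform approximate lattice in the connected abelian group $gC^0g^{-1}\cong T\times\mathbb{R}^r$.

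Next, extract a free abelian group of rank $r$. The subgroup $\langle\Delta\rangle \subset \langle\Lambda^2\rangle \cap gC^0g^{-1}$ is abelian. Consider its image under the projection $p\colon gC^0g^{-1}\to gAg^{-1}\cong\mathbb{R}^r$, whose kernel is the compact group $gTg^{-1}$. Because $\Delta$ is relatively dense in $gC^0g^{-1}$ and $T$ is compact, $p(\Delta)$ is relatively dense in $\mathbb{R}^r$. A relatively dense subset of $\mathbb{R}^r$ spans $\mathbb{R}^r$ as a real vector space. So we can pick $\delta_1,\dots,\delta_r\in\Delta$ with $p(\delta_1),\dots,p(\delta_r)$ linearly independent over $\mathbb{R}$. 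The $\delta_i$ commute, and any relation $\prod\delta_i^{n_i}=e$ projects to $\sum n_i p(\delta_i)=0$, which forces all $n_i=0$. Hence $\langle\delta_1,\dots,\delta_r\rangle\cong\mathbb{Z}^r$, and this group lies in $\langle\Lambda^2\rangle$.

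The main obstacle is the reduction in the first paragraph. The corollary assumes only that $\Lambda$ is a uniform approximate lattice arising as a regular model set. Theorem \ref{theorem-non-compact-cartan} needs $H$ semisimple without compact factors, $\Gamma$ irreducible, and $\Gamma$ uniform, so each of these has to be justified. I would first try to cite the arithmeticity and structure results for approximate lattices in semisimple groups, which identify $H$ up to compact factors. Uniformity of $\Gamma$ follows from $\Lambda$ being uniform with a regular compact window, by the standard correspondence between uniform model sets and cocompact $\Gamma$. The remaining steps are elementary.
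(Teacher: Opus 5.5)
Your core argument is the paper's: take a Cartan subgroup $C$ with $C^0=TA$ and $\dim A=r$, apply Theorem~\ref{theorem-non-compact-cartan}(3) to get $g$ with $\Lambda^2\cap gC^0g^{-1}$ relatively dense in $gC^0g^{-1}$, then extract $\mathbb{Z}^r$. Your projection step is correct. A relatively dense set in $\mathbb{R}^r$ spans, so you can pick $r$ elements with independent images, and since $p$ is a homomorphism there are no relations among them. The paper asserts this step in one line ("$gC^0g^{-1}\cap\langle\Lambda^2\rangle$ is cocompact, hence contains $\mathbb{Z}^r$"), and your version makes it explicit. Note that only relative density is used.

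Be careful with your reduction paragraph, which the paper does not attempt. The paper reads ``arising as a regular model set'' in the standing setting of Theorem~\ref{theorem-non-compact-cartan}: $H$ linear semisimple without compact factors and $\Gamma$ irreducible. It then simply invokes that theorem. Of the three hypotheses, only uniformity of $\Gamma$ follows from relative density of $\Lambda$, by Proposition~\ref{proposition-uniformly discrete-app latices from model set}(4), as you say. The other reductions you sketch do not work as stated:
\begin{enumerate}
\item If $H=H_1\times K$ with $K$ compact, projecting $\Gamma$ to $G\times H_1$ does not make $\Lambda$ a regular model set over the new scheme unless $W_0$ is $K$-saturated. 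In general the condition $\pi_H(\gamma)\in W_0$ depends on the $K$-coordinate.
\item Similarly, a reducible $\Gamma$ with a non-product window does not split $\Lambda$ into model sets over the factors.
\item A priori $H$ is only an lcsc group, possibly with $p$-adic factors. The Hrushovski--Machado results (cf.\ Proposition~\ref{proposition-huruska}) then yield only a model set $\Lambda'$ \emph{commensurable} to $\Lambda$. You would still need an argument transferring a relatively dense subset of $\Lambda'^2\cap gC^0g^{-1}$ back into $\langle\Lambda^2\rangle$.
\end{enumerate}
Either state the hypotheses of Theorem~\ref{theorem-non-compact-cartan} explicitly, in which case your proof is complete and matches the paper's, or supply such a transfer argument.
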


For $\Lambda$ and $G$ as in Theorem \ref{theorem-non-compact-cartan}, by using ergodicity, we show that $\Lambda^2$ contains  $\mathbb{R}$-regular elements in Proposition \ref{proposition-R-regular-element}, in a similar spirit to Theorem \ref{theorem-regular element-strong approximate lattice}.

\subsection{Characterization of $\mathbb R$-rank of linear semisimple group by Uniform approximate Lattice}
In the classical theory of lattices in semisimple Lie groups, a central theme is that discrete subgroups reflect structural properties of the ambient group. Let $G$ be a semisimple Lie group and let $\Gamma \subset G$ be a lattice. The lattice $\Gamma$ is called \emph{uniform} if $G/\Gamma$ is compact.
A fundamental result due to Prasad and Raghunathan shows that a lattice determines the ambient group in a very strong sense. More precisely, if two connected semisimple Lie groups contain isomorphic lattices, then their $\mathbb{R}$-ranks coincide \cite{pr}. In the uniform case, this result is due to J. A. Wolf \cite{W62}.

In the following theorem, we prove that uniform approximate lattices arising as regular model sets in linear semisimple Lie groups without compact factors determine the $\mathbb{R}$-rank of the ambient group.

\begin{theorem}\label{theorem-determine-rank}
Let $G_1$ and $G_2$ be connected linear semisimple Lie groups without
compact factors. Let $\Lambda_1 \subset G_1$ and $\Lambda_2 \subset G_2$
be uniform approximate lattices arising as regular model sets. 
If the groups $\langle \Lambda_1^2 \rangle$ and $\langle \Lambda_2^2 \rangle$
are isomorphic (as a topological group), then
$\mathbb{R}\text{-rank}(G_1)=\mathbb{R}\text{-rank}(G_2).$
\end{theorem}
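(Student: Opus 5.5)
We follow the Prasad--Raghunathan strategy and express $\mathbb R$-rank$(G)$ as a purely group-theoretic invariant of $\Delta=\langle\Lambda^2\rangle$, which is then preserved by any isomorphism $\langle\Lambda_1^2\rangle\cong\langle\Lambda_2^2\rangle$. The candidate invariant is
\[
r(\Delta)=\max\{\, k : \Delta \text{ contains a free abelian subgroup } A\cong\mathbb Z^k \text{ whose centralizer } Z_\Delta(A) \text{ contains } A \text{ with finite index, up to a finite normal subgroup}\},
\]
or, more robustly, the maximal rank of a free abelian subgroup $A\subset\Delta$ such that $Z_\Delta(A)$ is virtually abelian of the same rank. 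The proof has two inequalities: $r(\Delta)\ge \mathbb R\text{-rank}(G)$ and $r(\Delta)\le \mathbb R\text{-rank}(G)$.

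For the lower bound we use Theorem \ref{theorem-non-compact-cartan} and Corollary \ref{cor-free-abelian-group}. Choose a Cartan subgroup $C$ of $G$ containing a maximal $\mathbb R$-split torus $S$, so $C^0=T\cdot S$ with $T$ compact. Since $\Lambda$ comes from a uniform $\Gamma$, part (3) gives that $\Lambda^2\cap gC^0g^{-1}$ is a uniform approximate lattice in the abelian group $gC^0g^{-1}$. The subgroup it generates projects, modulo the compact part $T$, onto a cocompact discrete subgroup of $S\cong\mathbb R^r$; this uses the Meyer property, so the generated group is still discrete modulo $T$. This yields $A\cong\mathbb Z^r$ inside $\Delta$. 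Proposition \ref{proposition-R-regular-element} shows that $\Lambda^2$ contains $\mathbb R$-regular elements, so we may take $A$ to contain an $\mathbb R$-regular element. Then $Z_G(A)\subset Z_G(S')$ for the split part, and $Z_G(S')$ is compact $\times S$. Hence $Z_\Delta(A)$ lies in $Z_G(S)\cap\Delta$, which is virtually $A$ modulo a compact part. This compact part is where uniform discreteness of $\Lambda$ enters, but $\Delta$ itself need not be discrete, and this is a concern.

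For the upper bound, let $A\cong\mathbb Z^k\subset\Delta$ have the stated property. Take its Zariski closure in $G$: an abelian algebraic group whose split part has dimension at most $r$. The aim is to show that an unbounded rank exceeding $r$ forces a nontrivial compact-torus contribution, which would contradict the centralizer condition. I expect this to be the main obstacle. $\Delta=\langle\Lambda^2\rangle$ is generally dense in $G$ (it is the projection of $\langle\Gamma\rangle$), so the discreteness used by Prasad--Raghunathan is lost. The fix I would try is to work in $G\times H$: $\Delta$ is isomorphic to the subgroup of $\Gamma$ generated by the relevant elements, via the injective projection. An isomorphism of $\Delta$'s is then an isomorphism of finite-index-type subgroups of the lattices $\Gamma_i$, or of the lattices $\Gamma_i$ themselves if $\Lambda^2$ generates $\pi_G(\Gamma)$. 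Prasad--Raghunathan would then give $\mathbb R\text{-rank}(G_1\times H_1)=\mathbb R\text{-rank}(G_2\times H_2)$. To isolate $\mathbb R$-rank$(G)$, the topological-group hypothesis must be used. The isomorphism is a homeomorphism for the subspace topology from $G$, so it detects which abelian subgroups have relatively compact versus unbounded images in $G$. We then define $r(\Delta)$ as the maximal rank of $A$ whose closure in $G$ is unbounded in $k$ independent directions, i.e. $\overline{A}$ has noncompact part of dimension $k$. This is preserved by topological isomorphisms, is $\le r$ because $\overline{A}$ is abelian in $G$, and is $\ge r$ by the construction above.
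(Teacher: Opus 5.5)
Your final invariant is the largest $k$ such that some $A\cong\mathbb{Z}^k$ in $\Delta=\langle\Lambda^2\rangle$ has closure $\overline{A}\subset G$ with non-compact part of dimension $k$. This is a genuinely different route from the paper's. You were right to drop the earlier candidates: typically $\Delta=\Gamma_G\cong\Gamma$ as abstract groups, so centralizer or abelian-rank invariants only see the cocompact lattice $\Gamma<G\times H$ and return $r_G+r_H$. The topology has to be used. However, each of the three properties you assert for the final invariant is, as written, unsupported or false.

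First, ``$\le r$ because $\overline{A}$ is abelian'' is false. The group $\{\left(\begin{smallmatrix} I_2 & X\\ 0 & I_2\end{smallmatrix}\right): X\in M_2(\R)\}\cong\R^4$ is a closed abelian subgroup of $\SL_4(\R)$, which has $\R$-rank $3$. The missing idea is that every element of $\Delta$ is semisimple. $\Gamma$ is cocompact (forced by Proposition~\ref{proposition-uniformly discrete-app latices from model set}(4)), so conjugacy classes of elements of $\Gamma$ in $G\times H$ are closed. Hence the $G$-conjugacy classes of their $G$-components are closed too. The Zariski closure of $A$ is then an $\R$-group of multiplicative type. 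Its real points are, up to finite index, a compact torus times $\R^s$ with $s\le r$, and $\overline{A}$ lies inside them.

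Second, the lower bound rests on the claim that the group generated by $\Lambda^2\cap gC^0g^{-1}$ is discrete modulo $T$. This is false in general: Meyer sets typically generate dense subgroups (think of $\mathbb{Z}[\sqrt2]\subset\R$), and here that group typically has rank $>r$. The same objection defeats your claim that $Z_\Delta(A)$ is virtually $A$ modulo a compact part. You only need $r$ elements. By relative density (Theorem~\ref{theorem-non-compact-cartan}(3)), pick $x_1,\dots,x_r\in\Lambda^2\cap gC^0g^{-1}$ whose images in $gC^0g^{-1}/gTg^{-1}\cong\R^r$ are linearly independent. Then $A=\langle x_1,\dots,x_r\rangle\cong\mathbb{Z}^r$, and $\overline{A}$ is compact-by-$\mathbb{Z}^r$.

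Third, invariance needs more than ``detecting relatively compact images''. You need $\varphi$ to extend to a topological isomorphism $\overline{A}\to\overline{\varphi(A)}$. This holds because the closure of a subgroup in the locally compact, hence complete, group $G_i$ is its two-sided completion, and a topological isomorphism is a uniform isomorphism.

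With these repairs your argument is complete, and it differs substantially from the paper's. The paper characterizes the rank by covering $\Delta$ with finitely many translates of $\mathcal{S}_t$ (Theorem~\ref{theorem-RR}), using regular and $\R$-regular elements, Borel density and Theorem~\ref{theorem-R-regular-Raghunathan}. Your non-discreteness worry is exactly where that route is delicate. Proposition~\ref{prop-P}(2)--(3) asserts that $\langle\Lambda^2\cap C_G(\gamma)^0\rangle$ is virtually $\mathbb{Z}^r$ and that $C_\Delta(\gamma)$ is compact modulo a $\mathbb{Z}^r$. The same $\mathbb{Z}[\sqrt2]$ phenomenon contradicts this in general, whereas a closure-based invariant is immune to it. Finally, the completion argument applied to $\Delta$ itself already gives $\overline{\langle\Lambda_1^2\rangle}\cong\overline{\langle\Lambda_2^2\rangle}$. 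These closures are typically all of $G_1$ and $G_2$, which shows how much of the work the topological hypothesis is doing.
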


\begin{remark}
  If the groups $\langle \Lambda_1^2 \rangle$ and $\langle \Lambda_2^2 \rangle$, as defined in Theorem~\ref{theorem-determine-rank}, are discrete and isomorphic as abstract groups, then it follows that $\mathbb{R}\text{-rank}(G_1) = \mathbb{R}\text{-rank}(G_2)$.
\end{remark}

The paper is organized as follows. In Section 2, we fix notation, recall key definitions, and provide examples of strong approximate lattices arising as regular model sets. In Section 3, we prove Moore’s ergodicity-type theorem for the hull of a strong approximate lattice in Theorem \ref{theorem-moore’s ergodicity-type theorem for hull}. In Section 4, we establish a result on the interaction between regular elements and strong approximate lattices in Theorem \ref{theorem-regular element-strong approximate lattice}. In Section 5, assuming the strong approximate lattice arises as a regular model set, we prove a strong interaction with any non-compact Cartan subgroup in Theorem \ref{theorem-non-compact-cartan}, Corollary \ref{cor-free-abelian-group} and Proposition \ref{proposition-R-regular-element}. Finally, in Section 6, we prove Theorem \ref{theorem-RR} for a characterisation of $\mathbb R$-rank of a semisimple Lie group, and deduce Theorem \ref{theorem-determine-rank}.

\section{Preliminaries}
In this section, we review basic definitions and provide explicit examples and results that will be used throughout the paper. Unless otherwise specified, all Lie groups considered are connected.

\subsection{Uniform approximate lattices}

Let $G$ be a locally compact, second-countable group. Let $A,B$ be subsets of $G$, then we denote by $AB=\{ab \in G:a \in A, b\in B $ and $A^{-1}=\{a^{-1}\in G : a \in A \}$, their product and inverse set, respectively.

\begin{definition}[Approximate subgroup]

An approximate subgroup of a group $G$ is a subset $\Lambda \subset G$ such
that $\Lambda$ is symmetric (i.e.\ $\Lambda = \Lambda^{-1}$), contains the identity
element $e \in G$, and there exists a finite subset $F \subset G$ satisfying
\[
\Lambda^2 := \{ \lambda_1 \lambda_2 \in G \mid \lambda_1, \lambda_2 \in \Lambda \}
\subset F \Lambda .
\] 

\end{definition}

We recall that a subset $P \subset G$ is called {\it relatively dense} if there exists a compact subset $K \subset G$ such that $G=KP$. Further, $P$ is called uniformly discrete if the identity $e \in G$ is not an accumulation point of $P^{-1}P$, i.e.\ $P P^{-1} \cap U = \{e\}$ for some neighbourhood $U$ of the identity.

\begin{definition}[Uniform approximate lattice]
    An approximate subgroup $\Lambda$ of a group $G$ is called a \emph{uniform approximate lattice} if it is uniformly discrete and relatively dense.
\end{definition}

\begin{definition}[Approximate lattice]
    A discrete approximate subgroup $\Lambda$ of a group $G$ is called an \emph{approximate lattice} if there exists a subset $F \subset G$ with finite Haar-measure such that $G=FP$.
\end{definition}

\subsection{Chabauty-Fell Topology and Strong Approximate Lattices}
In this section, recall Chabauty-Fell Topology and some basis definitions which are relevant for this article, see for example \cite{bhs19}, and \cite{bh18}. Let $G$ be a locally compact second countable group and let $\mathcal{C}(G)$ denote
the set of closed subsets of $G$. The \emph{Chabauty--Fell topology} on $\mathcal{C}(G)$
is defined by the subbasis of open sets
\[
\mathcal{O}_U := \{ Q \in \mathcal{C}(G) \mid Q \cap U \neq \varnothing \}
\quad\text{and}\quad
\mathcal{O}_K := \{ Q \in \mathcal{C}(G) \mid Q \cap K = \varnothing \},
\]
where $U \subset G$ ranges over open subsets and $K \subset G$ over compact subsets.

We recall here that the map
\[
G \times \mathcal{C}(G) \longrightarrow \mathcal{C}(G),
\qquad
(g,Q) \longmapsto gQ,
\]
defines a continuous action of $G$ on $\mathcal{C}(G)$, and that $\mathcal{C}(G)$
is a compact metrizable space (see~\cite[Propositions 1.7 and 1.8]{f07}).
Convergence in the Chabauty--Fell topology can be characterised as follows:
a sequence $(Q_i)_{i \ge 0}$ in $\mathcal{C}(G)$ converges to $Q \in \mathcal{C}(G)$
if and only if the following two conditions hold:
\begin{enumerate}
  \item for every $x \in Q$ there exist $x_i \in Q_i$ such that $x_i \to x$ as
  $i \to \infty$;
  \item if $x_i \in Q_i$ for all $i \in \mathbb{N}$, then every accumulation point
  of the sequence $(x_i)_{i \ge 0}$ belongs to $Q$.
\end{enumerate}

Given a closed subset $Q \subset G$, its $G$-orbit in $\mathcal{C}(G)$ is the set
$G \cdot Q := \{ gQ \mid g \in G\},$
where
$gQ := \{\, gq \mid q \in Q\}.$
The invariant hull $\Omega_Q$ is defined as the closure of this orbit in
$\mathcal{C}(G)$ with respect to the Chabauty--Fell topology, that is,
\[
\Omega_Q := \overline{\{\, gQ \mid g \in G \,\}} \subset \mathcal{C}(G).
\]

Note that if $H \leq G$ is a closed subgroup,
then $\Omega_H$ is $G$-equivariantly homeomorphic to the one-point compactification
of $G/H$ (see~\cite[Lemma~6]{bhs19}).

\begin{definition}[Strong approximate lattice] %\cite{bh18}
 Let $\Lambda$ be a discrete approximate subgroup of a locally compact second countable group $G$. We say that $\Lambda$ is a \emph{strong approximate lattice} if there exists a $G$-invariant Borel probability measure $\nu$ on $\Omega_\Lambda$
  such that $\nu(\{\varnothing\}) = 0$ (in which case we say that $\nu$ is
  \emph{proper}).

\end{definition}

\subsection{Model set}
Model sets were introduced in the seminal work of Meyer (1970). The model set is used to describe special point patterns, especially in the study of quasicrystals. A \textit{model set} is a set of points obtained by projecting a higher-dimensional lattice onto a lower-dimensional space and selecting points using a window. These sets help explain non-periodic but highly ordered structures in mathematics and physics. To define model sets formally, we use the \textit{cut-and-project scheme}. We recall definitions from Subsection 2.3 of \cite{bh18}.

 \begin{definition}[Cut-and-project scheme]
A cut-and-project scheme is a triple $(G,H,\Gamma)$ where $G$ and $H$ are locally compact second countable (lcsc) groups and $\Gamma < G \times H$ is a lattice which projects injectively to $G$ and densely to $H$. A cut-and-project scheme is called uniform if $\Gamma$ is a uniform lattice.

\end{definition}

Given a cut-and-project scheme $(G,H,\Gamma)$ we denote by $\pi_G,\pi_H$ the coordinate projections
of $G \times H$ on $G$ and $H$, respectively and set
$\Gamma_G := \pi_G(\Gamma),\;\Gamma_H := \pi_H(\Gamma).$
We then define a map $\tau \colon \Gamma_G \to H$ by
$\tau := \pi_H \circ (\pi_G|_\Gamma)^{-1}.$
Note that the image of $\tau$ is precisely $\Gamma_H$. If $G$ and $H$ are abelian, then $\tau$ is known as \emph{$\ast$-map}.

\begin{definition}[Regular model set]
Let $(G,H,\Gamma)$ be a cut-and-project scheme with associated $\tau$-map
$\tau \colon \Gamma_G \to H$. Given a compact subset $W_0 \subset H$, the pre-image
$P_0(\Gamma,W_0) := \tau^{-1}(W_0) \subset G$
is called a \emph{weak model set}, and $W_0$ is called its \emph{window}. A weak model set is called
a \emph{model set} if its window has non-empty interior. It is called \emph{regular} if the window
$W_0$ is Jordan-measurable (i.e the boundary of $W_0$, $\partial W_0$, has Haar measure zero) with dense interior, aperiodic (i.e.\ $\operatorname{Stab}_H(W_0)=\{e\}$),
and satisfies
$\partial W_0 \cap \Gamma_H = \varnothing.$
 %A \emph{Meyer set} is a relatively dense subset of a uniform model set.
\end{definition}

\begin{definition}[Uniform model set]
    A model set is called a \emph{uniform model set} if the underlying cut-and-project scheme is uniform.
\end{definition}

\begin{definition}[Meyer set]
    A relatively dense subset of a model set is called \emph{Meyer Set}.
\end{definition}

Now we recall \cite[Proposition 2.13]{bh18}, which states that under suitable assumptions on the window, uniform model sets are uniform approximate lattices.

\begin{proposition} \label{proposition-uniformly discrete-app latices from model set}
Let $\Lambda = P_0(\Gamma, W_0)$ be a model set over $(G,H,\Gamma)$.
\begin{enumerate}
\item $\Lambda$, and in fact $\Lambda^{-1}\Lambda$, is uniformly discrete. % In particular, $\Lambda$ has finite local complexity.
\item $\Lambda$ is an approximate subgroup if its window
is symmetric and contains the identity.
 \item If $\Gamma$ is uniform, then $\Lambda$ is relatively dense.
 \item If $\Gamma$ is non-uniform, then $\Lambda$ is not relatively dense.
\end{enumerate}
In particular, a uniform model set is a uniform approximate lattice if its window is symmetric and
contains the identity.
\end{proposition}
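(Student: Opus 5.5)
The statement to prove is Proposition~\ref{proposition-uniformly discrete-app latices from model set}, quoted from \cite[Proposition 2.13]{bh18}. I would prove it directly from the cut-and-project structure, using that $\pi_G|_\Gamma$ is injective and $\Gamma$ is discrete in $G\times H$. Throughout, $\tau$ is a homomorphism on the group $\Gamma_G$, since it is the composite of the group isomorphism $(\pi_G|_\Gamma)^{-1}$ with $\pi_H$. Consequently
\[
\Lambda^{-1}\Lambda \subset \tau^{-1}(W_0^{-1}W_0) = P_0(\Gamma, W_0^{-1}W_0),
\]
and $W_0^{-1}W_0$ is again compact.

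\textbf{Part (1).} It suffices to show that any weak model set $P_0(\Gamma,W)$ with compact window $W$ is uniformly discrete; applying this to $W_0^{-1}W_0$ handles $\Lambda^{-1}\Lambda$. Since $\Gamma$ is discrete, pick an identity neighbourhood $V$ in $G$ with $\overline V$ compact and $\Gamma\cap(\overline{V}\times W'')=\{e\}$, where $W''=(W^{-1}W)^{-1}(W^{-1}W)$ is compact; this is possible because $\Gamma$ meets the compact set $\overline V_1\times W''$ in finitely many points, and we shrink $V$ to exclude the nontrivial ones. If $x,y\in P_0(\Gamma,W^{-1}W)$ with $x y^{-1}\in V$, then the lift $(xy^{-1},\tau(x)\tau(y)^{-1})$ lies in $\Gamma\cap(V\times W'')$. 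Hence $x=y$. This gives uniform discreteness of $\Lambda$ and of $\Lambda^{-1}\Lambda$ (one can use $W''$ enlarged as needed).

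\textbf{Part (2).} Symmetry follows because $\tau(x^{-1})=\tau(x)^{-1}$, and $e\in\Lambda$ because $\tau(e)=e\in W_0$. For the finite set $F$, note $\Lambda^2\subset P_0(\Gamma,W_0^2)$. Since $\Gamma_H$ is dense and $W_0$ has nonempty interior $U$, the compact set $W_0^2$ is covered by finitely many translates $\tau(f_i)U^{-1}$, hence by finitely many $\tau(f_i)W_0$ with $f_i\in\Gamma_G$. Now if $\tau(x)\in\tau(f_i)W_0$, then $\tau(f_i^{-1}x)\in W_0$, so $x\in f_i\Lambda$. Thus $\Lambda^2\subset F\Lambda$ with $F=\{f_i\}$. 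Nonempty interior is exactly what this step needs, and the model-set hypothesis supplies it.

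\textbf{Parts (3) and (4).} These are the main steps. For (3), take $\Gamma$ uniform with $G\times H=K_0\Gamma$ for some compact $K_0$, and let $U$ be an open set with compact closure inside $W_0$. Since $\Gamma_H$ is dense, finitely many translates $U\tau(\gamma_j)^{-1}$ cover the compact set $\pi_H(K_0)^{-1}$. Given $g\in G$, write $(g,e)=k\gamma$ with $k\in K_0$. Then choose $j$ so that $\pi_H(\gamma)\tau(\gamma_j)\in U$; this is arranged because $\pi_H(\gamma)=\pi_H(k)^{-1}$. It follows that $\pi_G(\gamma)\gamma_j\in\Lambda$ and $g\in\pi_G(K_0)\gamma_j^{-1}\cdot\Lambda$. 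Hence $G=K\Lambda$ with $K=\bigcup_j\pi_G(K_0)\gamma_j^{-1}$ compact.

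For (4), argue by contradiction: suppose $G=K\Lambda$ with $K$ compact. Then $K\times W_0$ maps onto $(G\times H)/\Gamma$ modulo the $H$-direction, and I would upgrade this to full cocompactness. For $(g,h)$, density of $\Gamma_H$ first lets me multiply by some $\gamma$ so that the $H$-coordinate lands in a compact set $W_0 C$; relative density in $G$ then controls the $G$-coordinate. The resulting compact fundamental set contradicts non-uniformity. The delicate point is arranging both coordinates simultaneously, and I expect this bookkeeping to be the main obstacle. My approach is to use that $\Lambda$ relatively dense implies $\Gamma\cap(G\times W_0)$ meets every set $K^{-1}g\times W_0$, so that $(G\times H)=(K\times W_0 W_0^{-1}C')\Gamma$ for a suitable compact $C'$.

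The final ``in particular'' statement combines (2) and (3) with part (1).
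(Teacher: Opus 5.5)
The paper gives no proof of this proposition; it is quoted from \cite[Proposition 2.13]{bh18}, so your direct cut-and-project argument has to stand on its own. Parts (1) and (2) are fine. In (1), shrinking $V$ removes the finitely many nontrivial points of $\Gamma\cap(\overline{V_1}\times W'')$ because injectivity of $\pi_G|_\Gamma$ forces their $G$-coordinates to differ from $e$. In (2), replacing $\tau(f_i)U^{-1}$ by $\tau(f_i)W_0$ uses $U^{-1}\subset W_0$, i.e.\ symmetry of $W_0$. Two points need repair.

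\textbf{Part (3): the final inclusion is wrong as written.} From $\pi_H(\gamma)\tau(\gamma_j)\in U$ you correctly get $\lambda:=\pi_G(\gamma)\gamma_j\in\Lambda$. But then
\[
g=\pi_G(k)\pi_G(\gamma)=\pi_G(k)\,\lambda\,\gamma_j^{-1}\in \pi_G(K_0)\,\Lambda\,\gamma_j^{-1},
\]
not $\pi_G(K_0)\gamma_j^{-1}\Lambda$. Since $G$ is non-abelian, this does not yet give $G=K\Lambda$.

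The cleanest fix is to translate on the other side. By density, the sets $\tau(\gamma')^{-1}U$ with $\gamma'\in\Gamma_G$ cover $H$, since $\tau(\gamma')\in Uh^{-1}$ is solvable. So finitely many $\tau(\gamma_j)^{-1}U$ cover $\pi_H(K_0)^{-1}$. Choosing $j$ with $\tau(\gamma_j)\pi_H(\gamma)\in U$ gives $\lambda:=\gamma_j\pi_G(\gamma)\in\Lambda$ and $g=\pi_G(k)\gamma_j^{-1}\lambda$, so $K=\bigcup_j\pi_G(K_0)\gamma_j^{-1}$ works. Alternatively, $\Lambda\gamma_j^{-1}\subset P_0(\Gamma,W_0\tau(\gamma_j)^{-1})$ is covered by finitely many left translates of $\Lambda$ by the covering argument of (2).

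\textbf{Part (4): the ``obstacle'' you anticipate is not there, but you did not finish the argument.} Suppose $G=K\Lambda$ and fix a compact identity neighbourhood $C\subset H$.
\begin{enumerate}
\item Given $(g,h)$, density of $\Gamma_H$ gives $\gamma_1\in\Gamma$ with $\pi_H(\gamma_1)\in(\operatorname{int}C)^{-1}h$. Then $(g',h'):=(g,h)\gamma_1^{-1}$ has $h'\in C$.
\item Write $g'=k\lambda$ with $k\in K$ and $\lambda\in\Lambda$, and set $\gamma_2:=(\lambda,\tau(\lambda))\in\Gamma$.
\item Then $(g',h')\gamma_2^{-1}=(k,h'\tau(\lambda)^{-1})\in K\times CW_0^{-1}$, since $\tau(\lambda)\in W_0$.
\end{enumerate}
Thus $G\times H=(K\times CW_0^{-1})\Gamma$, so $(G\times H)/\Gamma$ is compact, contradicting non-uniformity. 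The simultaneous control of both coordinates is automatic: the second step moves the $H$-coordinate only by an element of the compact set $W_0^{-1}$.
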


The following result (see \cite[corollary 3.5]{BHP}) gives a way to construct strong approximate lattices from a regular model set.

\begin{corollary}[Regular model sets as strong approximate lattices]\label{corollary-Regular model sets as strong approximate lattices}
A regular model set is a strong approximate lattice provided its window is symmetric and contains
the identity.
\end{corollary}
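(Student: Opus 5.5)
The approach is to check that $\Lambda=P_0(\Gamma,W_0)$ is a discrete approximate subgroup, and then to produce a proper $G$-invariant probability measure on $\Omega_\Lambda$ by pushing forward the invariant measure on $Y:=(G\times H)/\Gamma$ along an equivariant "parametrization" map. The first part is immediate from Proposition~\ref{proposition-uniformly discrete-app latices from model set}. Item (1) gives that $\Lambda$ is uniformly discrete, hence discrete. Item (2) gives that $\Lambda$ is an approximate subgroup, since $W_0$ is symmetric and contains $e$. So everything reduces to constructing $\nu$.

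\textbf{Step 1: the parametrization map.} Let $\mu$ be the $(G\times H)$-invariant probability measure on $Y$, which exists because $\Gamma$ is a lattice. For $y=(g,h)\Gamma\in Y$ set
\[
\beta(y):=\{\pi_G(x): x\in (g,h)\Gamma,\ \pi_H(x)\in W_0\}=\{g\gamma_G : h\tau(\gamma_G)\in W_0\}.
\]
This depends only on the coset $y$. It is a closed subset of $G$, because $(g,h)\Gamma$ is discrete and $G\times W_0$ is closed. The map $\beta$ is $G$-equivariant, $\beta(g'y)=g'\beta(y)$, and $\beta(e\Gamma)=\Lambda$.

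The map $\beta$ never takes the value $\varnothing$. Indeed, $\Gamma_H$ is dense in $H$ and $W_0$ has non-empty interior, so $h\Gamma_H\cap W_0\neq\varnothing$ for every $h\in H$.

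\textbf{Step 2: continuity at generic points.} Call $y=(g,h)\Gamma$ \emph{generic} if $h\Gamma_H\cap\partial W_0=\varnothing$; this condition does not depend on the representative. I would show that $\beta$ is Chabauty--Fell continuous at every generic $y$, using the sequential criterion recalled in \S2.2.

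For the accumulation-point condition, take $y_n\to y$ and $x_n\in\beta(y_n)$ accumulating at $x$. Lift to points of $(g_n,h_n)\Gamma$ lying in $G\times W_0$ with $(g_n,h_n)\to(g,h)$. Discreteness of $\Gamma$ lets one pass to lattice points $\gamma_n$ that are eventually constant along a subsequence. Closedness of $W_0$ then puts the limit in $\beta(y)$. This direction needs no genericity.

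For the approximation condition, take $x=g\gamma_G\in\beta(y)$. Genericity forces $h\tau(\gamma_G)\in\operatorname{int}W_0$. Hence $h_n\tau(\gamma_G)\in W_0$ for all large $n$, and the points $g_n\gamma_G\in\beta(y_n)$ converge to $x$.

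I expect this to be the main technical step. It is exactly where the regularity hypothesis on the boundary of the window is used.

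\textbf{Step 3: genericity is conull, and the image lies in the hull.} The non-generic set is $\bigcup_{\gamma\in\Gamma}\{(g,h)\Gamma: h\tau(\gamma_G)\in\partial W_0\}$. Each member comes from the Haar-null set $G\times\partial W_0\tau(\gamma_G)^{-1}$, so each is $\mu$-null. The union is countable, since $\Gamma$ is countable in the second-countable group $G\times H$. Hence the generic set $Y'$ is $G$-invariant and $\mu$-conull.

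The orbit $G\cdot e\Gamma$ is dense in $Y$, because $\pi_H(\Gamma)$ is dense in $H$. By hypothesis $\partial W_0\cap\Gamma_H=\varnothing$, so $e\Gamma$ is itself generic. For $y\in Y'$, choose $g_n$ with $g_n\Gamma\to y$; continuity of $\beta$ at the generic point $y$ gives $g_n\Lambda=\beta(g_n\Gamma)\to\beta(y)$. Thus $\beta(Y')\subset\Omega_\Lambda$.

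\textbf{Step 4: the measure.} The map $\beta|_{Y'}$ is a pointwise limit of continuous objects, and more precisely it is continuous on the Borel set $Y'$. It is therefore Borel, so $\nu:=\beta_*(\mu|_{Y'})$ is a Borel probability measure on $\Omega_\Lambda$. Equivariance of $\beta$ and $G$-invariance of $\mu$ and of $Y'$ give that $\nu$ is $G$-invariant. Finally, $\nu(\{\varnothing\})=\mu(\beta^{-1}(\varnothing))=0$ by Step 1, so $\nu$ is proper and $\Lambda$ is a strong approximate lattice.
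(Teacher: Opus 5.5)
Your proof is correct, but it runs in the opposite direction from the argument the paper relies on. The paper gives no proof of its own and cites \cite[Corollary 3.5]{BHP}. As recalled in Theorem~\ref{uniquemeasueonhull}, that argument builds the unique $G$-equivariant closed-graph Borel map $\beta\colon\Omega_\Lambda^\times\to Y$ from the hull to $Y$. It proves this map is a bijection $X^{ns}\to Y^{ns}$, uses that $Y^{ns}$ is $\mu$-conull, and defines $\nu$ by transporting $\mu|_{Y^{ns}}$ back through the inverse.

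You instead write down the map $Y\to\mathcal{C}(G)$, $(g,h)\Gamma\mapsto g\,P_0(\Gamma,h^{-1}W_0)$, prove Chabauty--Fell continuity on the generic locus $Y'$, and push $\mu$ forward. Your genericity condition says exactly that $\partial(h^{-1}W_0)\cap\Gamma_H=\varnothing$, so $Y'$ is their non-singular locus $Y^{ns}$. By the closed-graph property, their map inverts yours on $Y'$, so the two measures coincide.

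Your route is more elementary: it needs no closed-graph or Borel-inverse arguments. It uses only that $W_0$ is compact with non-empty interior and Haar-null boundary, plus density of $\Gamma_H$. In particular, aperiodicity and the hypothesis $\partial W_0\cap\Gamma_H=\varnothing$ are never used. Your remark that $e\Gamma$ is generic plays no role, since continuity is only needed at the limit point $y$. Your construction also exhibits $(\Omega_\Lambda,\nu)$ directly as a $G$-factor of $(Y,\mu)$, which already suffices to transfer ergodicity from $Y$ to the hull as in Section~3.

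What the BHP direction buys is an identification of the hull with $Y$ over the non-singular locus, which is where the remaining regularity hypotheses come in. The paper leans on this identification for uniqueness of the invariant measure and for the formula $\nu(Z)=\mu(\beta(Z\cap X^{ns}))$.

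One small repair: closedness of $\beta(y)$ in $G$ needs compactness of $W_0$ (projection along a compact factor is a closed map), not merely closedness of $G\times W_0$.
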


It is natural to ask how one can construct regular windows. The following example illustrates a method to obtain a regular window, in particular, producing a strong approximate lattice.

    \begin{example}\label{example-strong app.lattice}
Consider the cut-and-project scheme
\[
(G,H,\Gamma)
=
\bigl(\mathrm{SL}_2(\mathbb{R}),\, \mathrm{SL}_2(\mathbb{R}),\,
\mathrm{SL}_2(\mathbb{Z}[\sqrt{2}])\bigr),
\]
where $\Gamma$ is embedded diagonally via the two real Galois embeddings
of $\mathbb{Q}(\sqrt{2})$.
Let $\tau \colon \Gamma_G \to H$ be the associated $\tau$-map.
Fix $\varepsilon > 0$ sufficiently small and define
$W_0 := \overline{B_\varepsilon(e)} \subset H,$
where $B_\varepsilon(e)$ denotes the open $\varepsilon$-ball around the identity
with respect to a left-invariant Riemannian metric on $\mathrm{SL}_2(\mathbb{R})$.
\end{example}

\begin{proposition}
The set $W_0$ is a regular window and the model set $\Lambda := P_0(\Gamma,W_0) = \tau^{-1}(W_0) \subset \mathrm{SL}_2(\mathbb{R})$ is a strong approximate lattice.
\end{proposition}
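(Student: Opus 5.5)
The plan is to check the defining conditions of a regular window one at a time: $W_0$ is compact and Jordan-measurable with dense interior, it is aperiodic, and $\partial W_0\cap\Gamma_H=\varnothing$. Once these hold, $W_0$ is symmetric and contains $e$, so Corollary~\ref{corollary-Regular model sets as strong approximate lattices} gives that $\Lambda$ is a strong approximate lattice. Before that, I confirm that $(G,H,\Gamma)$ is a cut-and-project scheme. By Borel--Harish-Chandra, $\mathrm{SL}_2(\mathbb{Z}[\sqrt2])$ embedded by $(\sigma_1,\sigma_2)$ is a lattice in $\mathrm{SL}_2(\R)^2$. The projection to the first factor is injective, since $\sigma_1$ is injective. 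Density of the projection to the second factor follows from irreducibility: $\Gamma$ is irreducible because $\mathrm{SL}_2$ is $\Q$-simple after restriction of scalars. Alternatively, one can show directly that the closure of $\sigma_2(\Gamma)$ is a closed subgroup of $\mathrm{SL}_2(\R)$ containing the unipotents $\begin{pmatrix}1&t\\0&1\end{pmatrix}$ for $t$ ranging over the dense set $\sigma_2(\mathbb{Z}[\sqrt2])$, and likewise lower-triangular ones, so it is everything.

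For the window, I take $\varepsilon$ smaller than the injectivity radius at $e$. Then $\overline{B_\varepsilon(e)}$ is compact with smooth boundary the metric sphere $S_\varepsilon=\{h: d(e,h)=\varepsilon\}$, which is a Haar-null hypersurface; hence $W_0$ is Jordan-measurable and the open ball is dense in it. Symmetry holds because for a left-invariant metric $d(e,h^{-1})=d(h,e)=d(e,h)$. Aperiodicity: if $hW_0=W_0$, then $h$, being a left translation, is an isometry, so it preserves the set of points of $W_0$ at maximal distance from the "center". More simply, $hW_0=\overline{B_\varepsilon(h)}$, and two closed balls of radius $\varepsilon$ below the injectivity/convexity radius coincide only if their centers coincide (the center is the unique point whose distance to the boundary is maximal), so $h=e$.

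The main obstacle is $\partial W_0\cap\Gamma_H=\varnothing$, since $\Gamma_H$ is dense and an arbitrary radius may hit it. Here I use the freedom in $\varepsilon$: $\Gamma_H$ is countable, so the set $\{d(e,\gamma):\gamma\in\Gamma_H\}$ of radii is countable. Choosing $\varepsilon$ sufficiently small and outside this countable set, no point of $\Gamma_H$ lies on $S_\varepsilon$. (Within the injectivity radius, $\partial\overline{B_\varepsilon}=S_\varepsilon$ exactly.) This makes the window regular, and the proposition follows from Corollary~\ref{corollary-Regular model sets as strong approximate lattices}. Additionally, Proposition~\ref{proposition-uniformly discrete-app latices from model set} shows that $\Lambda$ is an approximate subgroup. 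I read "fix $\varepsilon$ sufficiently small" as permitting this generic choice.
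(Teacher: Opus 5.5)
Your proposal is correct and follows essentially the same route as the paper. You check the regularity conditions one at a time, use the countability of $\Gamma_H$ to choose $\varepsilon$ outside the countable set of radii $d(e,\gamma)$, and conclude with Corollary~\ref{corollary-Regular model sets as strong approximate lattices}. Your two additions tighten steps the paper treats loosely: you verify that $(G,H,\Gamma)$ really is a cut-and-project scheme, and you prove aperiodicity by noting that the center is the unique point of $W_0$ maximizing the distance to $\partial W_0$, which is more rigorous than the paper's locally-Euclidean-balls remark.
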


\begin{proof}
We verify the defining properties of a regular window. The set $\Gamma_H=\pi_H(\Gamma)$ is countable. Consider the set $\mathcal R := \{ d(e,h) : h \in \pi_H(\Gamma) \} \subset \mathbb R_{\ge 0}$; since $\pi_H(\Gamma)$ is countable, $\mathcal R$ is countable, hence $\mathbb R_{>0} \setminus \mathcal R \neq \varnothing$.
Choosing $\varepsilon$ from $\mathbb R_{>0} \setminus \mathcal R \neq \varnothing$ ensures $\partial W_0 \cap \Gamma_H =\varnothing.$
By construction, $W_0$ is compact and has non-empty interior $\operatorname{int}(W_0)=B_\varepsilon(e)$.
Since $B_\varepsilon(e)$ is symmetric, so is $W_0$, and clearly $e \in W_0$. The boundary $\partial W_0$ is a smooth codimension-one submanifold of
$\mathrm{SL}_2(\mathbb{R})$, hence has Haar measure zero. Therefore, $W_0$ is Jordan-measurable.
To show periodicity of $W_0$, let $h \in \mathrm{SL}_2(\mathbb{R})$ be such that $hW_0=W_0$. Then $h$ fixes the identity and preserves a sufficiently small neighborhood of it. For $\varepsilon$
small enough, this implies $h=e$ (Since $hW_0=W_0$ implies that $\overline{B_\varepsilon(e)}=\overline{B_\varepsilon(h)}$, locally these are two balls in the Euclidean domain, therefore centers will be the same). Hence
$\operatorname{Stab}_H(W_0)=\{e\}.$
Thus, $W_0$ is a regular window. Since the window is symmetric and contains the
identity, Proposition \ref{proposition-uniformly discrete-app latices from model set} and Corollary \ref{corollary-Regular model sets as strong approximate lattices}  imply that $\Lambda$ is a
strong approximate lattice in $\mathrm{SL}_2(\mathbb{R})$.
\end{proof}

\subsection{Regular and $\mathbb R$-regular elements}
Let $\bf{G}$ be a connected semisimple algebraic group defined over $\mathbb R$ and contained in $\mathrm{GL}(n,\mathbb{C})$ for some $n$. We assume that if $g \in \bf G$ then conjugate transpose of $g$, $\Bar{g}^t$, also belongs to $\bf{G}$, that is, $\bf{G}$ is self-adjoint. Let $\bf{G}_{\mathbb R}:= \bf{G} \cap \mathrm{GL}(n,\mathbb{R})$ be group of real points in $\bf G$. The connected component of the identity, $G$, of $\bf{G}_{\mathbb R}$ is a linear semisimple group. Moreover, any connected linear semisimple group can be realized in this way (see \cite{M55}).
Let $G$ be a connected semisimple Lie group defined over $\R$. Consider the adjoint representation $\mathrm{Ad}$ of $G$ on its Lie algebra $\mathfrak{g}$. Let $\sigma$ be the Cartan involution of $\mathfrak{g}$. Then 
    $\mathfrak{g}=\mathfrak{k}+ \mathfrak{p},$ where $\mathfrak{k}=\{X\in \mathfrak{g}: \sigma X=X\}$ and $\mathfrak{p}=\{X\in \mathfrak{g}: \sigma X=-X\}$ satisfying $[\mathfrak{k},\mathfrak{k}]\subset\mathfrak{k}$, $[\mathfrak{k},\mathfrak{p}]\subset \mathfrak{p}$ and $[\mathfrak{p},\mathfrak{p}] \subset \mathfrak{k}$. Any two Cartan decompositions are conjugate to each other under the action of $\mathrm{Ad}(G)$, where $\mathrm{Ad}(G)$ denotes the adjoint group of $\mathfrak{g}$. Let $K$ be the analytic subgroup of $ G$ with Lie algebra $\mathfrak{k}$. We fix a maximal abelian subalgebra (Cartan subspace) $\mathfrak{a} \subset \mathfrak{p}$. It is well known that any two Cartan subspaces of $\mathfrak{p}$ are conjugate under $K$. Note that the Lie algebra of the subgroup of real points in a maximal $\mathbb{R}$-split torus in $\bf{G}$ can be conjugated to a Cartan-subspace of $\mathfrak{p}$. The dimension of $\mathfrak{a}$ is the \emph{$\mathbb{R}$-rank} of $G$.

    For $g\in G$, let $n_1(g)$ denote the multiplicity with which 1 occurs as an eigenvalue of $\mathrm{Ad}(g)$ and $m_1(g)$ denote the number of eigenvalues (counted with multiplicity) of modulus 1 of $\mathrm{Ad}(g)$.

\begin{definition}[Regular and $\mathbb R$-regular elements] An element $g \in G$ is regular (resp. $\mathbb{R}$-regular) if for all $h \in G$ we have $n_1(g) \leq n_1(h)$ (resp. $m_1(g) \leq m_1(h)$) and $-1$ is not an eigen value of $\mathrm{Ad}(g)$.
    
\end{definition}

It is known that any regular element in $G$ is contained in a unique Cartan subgroup. A subalgebra $\mathfrak c$ is said to be a Cartan subalgebra of a Lie algebra $\mathfrak g$ if $\mathfrak c$ is a nilpotent Lie algebra, and its normalizer $N_\mathfrak g(\mathfrak c)=\mathfrak c$. Lie algebra of a Cartan subgroup is a Cartan subalgebra, and $\exp(\mathfrak c)=C^0$. Moreover, there is a one-to-one correspondence between the Cartan subalgebras of Lie algebras and the Cartan subgroups of Lie groups.

\section{Ergodic action on hull of a strong approximate lattice arising as  Regular Model Sets}
In this section, we prove Theorem \ref{theorem-moore’s ergodicity-type theorem for hull}, and we first recall Moore's Ergodicity \cite[Theorem 2.2.6]{Z84}.

\begin{theorem}[Moore's Ergodicity Theorem]\label{moores ergodicity}
Let $G = G_1 \times \cdots \times G_n$ be a direct product of connected,
simple, non-compact Lie groups with finite center, and let $\Gamma \le G$
be an irreducible lattice. If $H \le G$ is a closed subgroup and $H$
is not compact, then $H$ acts ergodically on $G/\Gamma$.
\end{theorem}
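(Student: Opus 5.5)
The plan is to argue spectrally, through the Howe--Moore vanishing theorem for matrix coefficients, rather than by any geometric argument on $G/\Gamma$. Let $\mu$ be the $G$-invariant probability measure on $G/\Gamma$, and let $\pi$ be the quasi-regular unitary representation of $G$ on $L^2_0(G/\Gamma,\mu)$, the orthogonal complement of the constants. Ergodicity of $H$ is equivalent to the statement that $\pi$ has no nonzero $H$-invariant vectors. So suppose $f\in L^2_0(G/\Gamma)$ is $H$-invariant.

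\textbf{Step 1: no factor has invariant vectors.} I would first show that no simple factor $G_i$ has nonzero invariant vectors in $L^2_0(G/\Gamma)$. If $f$ is $G_i$-invariant, then it descends to a measurable function on $G_i\backslash G/\Gamma$. Write $G'_i=\prod_{j\neq i}G_j$, so that $G_i\backslash G\cong G'_i$. The function then becomes a right-$\overline{p_i(\Gamma)}$-invariant function on $G'_i$, where $p_i\colon G\to G'_i$ is the projection. Irreducibility of $\Gamma$ means precisely that $p_i(\Gamma)$ is dense in $G'_i$. A measurable function invariant under a dense subgroup is a.e.\ invariant under its closure, since translation is continuous in $L^2_{\mathrm{loc}}$. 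Hence $f$ is a.e.\ constant, and therefore $f=0$ in $L^2_0$.

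\textbf{Step 2: apply Howe--Moore.} The Howe--Moore theorem says the following. If $G=\prod G_i$ is a product of connected simple non-compact Lie groups with finite center, and $\pi$ is a unitary representation in which no $G_i$ has nonzero invariant vectors, then $\langle \pi(g)v,w\rangle\to0$ as $g\to\infty$ in $G$. By Step 1, $\pi$ satisfies this hypothesis. Since $H$ is closed and non-compact, choose $h_n\in H$ with $h_n\to\infty$. Then
\[
\|f\|^2=\langle \pi(h_n)f,f\rangle\longrightarrow 0,
\]
so $f=0$, and $H$ acts ergodically.

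\textbf{Main obstacle: Howe--Moore itself.} I would prove it in three stages.
\begin{enumerate}
\item Use the Cartan decomposition $G=KA^+K$ together with compactness of $K$, which reduces the claim to sequences $a_n\in A^+$ with $a_n\to\infty$.
\item Pass to a weak limit $P$ of $\pi(a_n)$ along a subsequence. A computation shows that any weak limit vector is invariant under the unipotent subgroups contracted by $a_n$: for $u$ in such a subgroup, $a_n^{-1}ua_n\to e$. This is the Mautner phenomenon.
\item Apply the Mautner phenomenon again. The opposite unipotents, together with these, generate every simple factor $G_i$ in which the $a_n$-component diverges. In factors where that component stays bounded, one passes to a further subsequence. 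Conclude that the weak limit is $G_i$-invariant for some $i$, hence zero by hypothesis.
\end{enumerate}
The delicate point is the product case. There one must handle sequences that go to infinity in only some factors, and ensure that the invariance obtained is for a whole factor $G_i$. The hypothesis on each factor, verified in Step 1, is exactly what is needed there.
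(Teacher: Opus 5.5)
The paper gives no proof of this statement; it quotes it from Zimmer's book (Theorem 2.2.6). Your Steps 1 and 2 form a correct proof once Howe--Moore is cited, and this is the standard derivation. Irreducibility, in the form ``$\Gamma G_i$ is dense in $G$ for every $i$'', rules out $G_i$-invariant vectors in $L^2_0(G/\Gamma)$. (If irreducibility is instead defined by the absence of a virtual product decomposition, this density is itself a theorem.) A representation-theoretic result then rules out $H$-invariant vectors. The one difference from the textbook route is that you pass through decay of matrix coefficients, which is more than the statement needs. The weaker ``no $H$-invariant vectors'' version avoids weak-operator limits: an $H$-invariant $f$ and $h_n=k_na_nk_n'$ give norm convergence $\pi(a_n)\pi(k')f\to\pi(k^{-1})f$. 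It still needs the same $\SL_2$ input discussed next.

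If the Howe--Moore sketch is meant to be part of your proof, Stage 3 has a real gap. Stage 2 only gives $\operatorname{Im}P\subset\mathcal H^{U^+}$ with $U^+=\{u: a_n^{-1}ua_n\to e\}$, and dually $\operatorname{Im}P^*\subset\mathcal H^{U^-}$. ``Applying Mautner again'' cannot make the same vector $U^-$-invariant. Mautner transfers invariance from (limits along) elements of $A$ to the subgroups they contract, never from a unipotent group to its opposite. Moreover, $\pi(a_n^{-1})v_0$ converges weakly to $P^*v_0$, which is a different vector.

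What is missing is the $\SL_2$ lemma: in a unitary representation of $\SL_2(\R)$, a vector $v$ fixed by the upper unipotent group $N=\{n_x\}$ is fixed by the whole group. The proof goes as follows. The function $\phi(g)=\langle\pi(g)v,v\rangle$ is continuous and bi-$N$-invariant. Let $h_\epsilon$ be the lower unipotent matrix with entry $\epsilon$, and set $x=(a-1)/\epsilon$, $y=(a^{-1}-1)/\epsilon$. Then $n_xh_\epsilon n_y$ is the lower-triangular matrix with diagonal $(a,a^{-1})$ and lower-left entry $\epsilon$. Letting $\epsilon\to0$ gives $\phi(\diag(a,a^{-1}))=\phi(e)=\|v\|^2$, so $v$ is fixed by the diagonal group. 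Only now does Mautner produce the lower unipotents.

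In your setting, let $a_n^{(i)}$ be the $G_i$-component and take a restricted root $\alpha$ of $G_i$ with $\alpha(\log a_n^{(i)})\to+\infty$; the highest root works as soon as $a_n^{(i)}\to\infty$ in $A_i^+$. Take an $\mathfrak{sl}_2$-triple $(X,H_\alpha,Y)$ with $X\in\mathfrak g_\alpha$ and $Y\in\mathfrak g_{-\alpha}$. Vectors in $\operatorname{Im}P$ are fixed by $\exp(\R X)\subset U^+$, so by the lemma they are fixed by the non-compact group $\exp(\R H_\alpha)$. Mautner then gives invariance under every $\exp\mathfrak g_\beta$ with $\beta(H_\alpha)\neq 0$. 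These root spaces generate a nonzero ideal of the simple algebra $\mathfrak g_i$, hence all of $G_i$. With this lemma inserted, your outline of Howe--Moore is complete.
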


Now, we also recall some parts of \cite[Theorem 3.1]{BHP} for our purpose.

\begin{theorem}\label{uniquemeasueonhull}
Let $G$ be a connected locally compact second countable group, and let $\Lambda = P_0(\Gamma, W_0)$ be a regular model set over a cut-and-project scheme $(G,H,\Gamma)$. Denote $Y:=\frac{G \times H}{\Gamma} $ and the punctured hull $\Omega_{\Lambda}^\times:=\Omega_{\Lambda}\setminus \{\phi\}$. There exists a unique $G$-equivariant Borel map $\beta: \Omega_{\Lambda}^\times \to Y $ which maps $\Lambda$ to $(e,e)\Gamma$ and has a closed graph. This map has the following additional properties.
\begin{enumerate}
    \item If $Y^{ns}:= \{(g,h)\Gamma \in Y : h^{-1}W_0 \text{ is } \Gamma\text{-}regular\}$ and $X^{ns} := \beta^{-1}(Y^{ns})$, then
$\beta|_{X^{ns}} : X^{ns} \to Y^{ns}$ is bijective. 

\item If $\Gamma$ is co-compact, then $\beta$ is continuous.
\end{enumerate}
\end{theorem}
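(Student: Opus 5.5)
The plan is to construct $\beta$ through its graph. Let
\[
\mathcal G_0:=\{(g\Lambda,(g,e)\Gamma): g\in G\}\subset \Omega_\Lambda^\times\times Y,
\]
and let $\mathcal G$ be its closure. The goal is to show that $\mathcal G$ is the graph of a function defined on all of $\Omega_\Lambda^\times$; that function is then $\beta$.

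\textbf{Equivariance and uniqueness of $\beta$.} Two of the required properties are immediate from this definition.
\begin{enumerate}
\item $\mathcal G$ is closed by construction, and it is $G$-invariant because $\mathcal G_0$ is invariant under the diagonal $G$-action.
\item Any $G$-equivariant map with closed graph sending $\Lambda$ to $(e,e)\Gamma$ has a graph containing $\mathcal G_0$, hence containing $\mathcal G$. So once $\mathcal G$ is shown to be a graph, $\beta$ is unique.
\end{enumerate}

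\textbf{Every $P\in\Omega_\Lambda^\times$ has a point above it.} Write $P=\lim g_n\Lambda$ and pick $p\in P$. By the Chabauty--Fell convergence criterion there are $\lambda_n=\pi_G(\gamma_n)\in\Lambda$ with $g_n\lambda_n\to p$. Then
\[
(g_n,e)\Gamma=(g_n\lambda_n,\tau(\lambda_n))\Gamma ,
\]
and $\tau(\lambda_n)\in W_0$ is compact, so a subsequence converges to some $(p,h)\Gamma$. This shows that the projection of $\mathcal G$ to $\Omega_\Lambda^\times$ is onto.

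\textbf{Sandwich property of limits.} Next I will prove that whenever $(P,(g,h)\Gamma)\in\mathcal G$, we have
\[
\pi_G\big((g,h)\Gamma\cap (G\times \operatorname{int}W_0)\big)\subseteq P\subseteq \pi_G\big((g,h)\Gamma\cap (G\times W_0)\big).
\]
Both inclusions follow by passing to limits in the description $g_n\Lambda=\pi_G((g_n,e)\Gamma\cap(G\times W_0))$.
\begin{itemize}
\item The left inclusion uses that points with second coordinate in the open set $\operatorname{int}W_0$ persist under small perturbation.
\item The right inclusion uses that $W_0$ is closed, together with the fact that $\Gamma$ is discrete, so lattice points near a limit come from lattice points of the approximating translates.
\end{itemize}

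\textbf{Single-valuedness (main obstacle).} Suppose $(g,h)\Gamma$ and $(g',h')\Gamma$ both lie over the same $P$. Comparing the two sandwiches, and using that $W_0$ is Jordan measurable with dense interior, the $H$-coordinates of the two shifted lattices must carry the windows onto one another up to a null boundary. I would first try the following route.
\begin{enumerate}
\item Use the sandwich to show that $g^{-1}g'$ lies in the group generated by the return times of $\Lambda$.
\item Lift that element to $\Gamma$, which allows assuming $g=g'$.
\item Then derive $\operatorname{int}(h^{-1}W_0)\cap\Gamma_H$-type coincidences for all shifts. By density of $\Gamma_H$ this forces $h^{-1}W_0=h'^{-1}W_0$ up to boundary.
\item Aperiodicity ($\operatorname{Stab}_H(W_0)=\{e\}$) then gives $h=h'$, hence equal points of $Y$.
\end{enumerate}

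\textbf{Borel measurability.} $\mathcal G$ is a closed subset of a Polish space, and its fibres over $\Omega_\Lambda^\times$ are singletons. A Borel injective image argument (Lusin–Souslin), applied to the projection restricted to $\mathcal G$, shows $\beta$ is Borel.

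\textbf{Continuity when $\Gamma$ is cocompact.} In this case $Y$ is compact. A function into a compact Hausdorff space with closed graph is continuous, which gives part (2).

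\textbf{Bijectivity on $X^{ns}$.} Let $(g,h)\Gamma\in Y^{ns}$, so the window $h^{-1}W_0$ is $\Gamma$-regular. Then the boundary of $W_0$ contains no relevant lattice points, and the two sides of the sandwich coincide. Hence $P$ is uniquely determined by $\beta(P)$, which gives injectivity of $\beta|_{X^{ns}}$.

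For surjectivity onto $Y^{ns}$, take $(g,h)\in G\times H$ with $(g,h)\Gamma\in Y^{ns}$. By density of $\Gamma_H$ choose $\gamma_n\in\Gamma$ with $\pi_H(\gamma_n)\to h^{-1}$, and set $g_n=g\pi_G(\gamma_n)$. Then $(g_n,e)\Gamma\to(g,h)\Gamma$. The sets $g_n\Lambda$ converge, along a subsequence, to some $P$. Since the window is regular, the sandwich forces $P$ to be exactly the model set attached to $(g,h)\Gamma$, and in particular $P\neq\varnothing$. So $\beta(P)=(g,h)\Gamma$.
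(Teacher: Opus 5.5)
The paper gives no proof of this statement. It is quoted from \cite[Theorem 3.1]{BHP} and used only as a black box, to transport the measure on $Y$ to the hull. Judged on its own, your argument is correct, and it is the standard torus-parametrization argument:
\begin{enumerate}
\item take $\mathcal G=\overline{\{(g\Lambda,(g,e)\Gamma)\}}$;
\item prove the sandwich $\pi_G((g,h)\Gamma\cap(G\times\operatorname{int}W_0))\subseteq P\subseteq\pi_G((g,h)\Gamma\cap(G\times W_0))$ for limit pairs;
\item read off single-valuedness, uniqueness, Borelness, continuity and bijectivity over $Y^{ns}$.
\end{enumerate}
Borelness follows because $\mathcal G$ is a closed graph between Polish spaces; $\Omega_\Lambda^\times$ is Polish, being open in the compact metrizable hull. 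Continuity follows because the graph is closed and the target $Y$ is compact. The argument uses only discreteness of $\Gamma$, injectivity of $\pi_G|_\Gamma$, density of $\Gamma_H$, and compactness, aperiodicity and $W_0=\overline{\operatorname{int}W_0}$ for the window.

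Two steps should be tightened.

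\textbf{Single-valuedness.} The conclusion must be exact equality, not equality ``up to boundary''. Aperiodicity concerns the exact stabilizer, so equality modulo a null set would not let you apply it. A clean route:
\begin{enumerate}
\item If $(g,h)\Gamma$ and $(g',h')\Gamma$ both lie over $P$, pick $p\in P$. The right-hand inclusion of the sandwich lets you rewrite the two cosets as $(p,h_1)\Gamma$ and $(p,h_1')\Gamma$. This replaces your ``return times'' step.
\item Injectivity of $\pi_G|_\Gamma$ and the two sandwiches give $\Gamma_H\cap h_1^{-1}\operatorname{int}W_0\subseteq h_1'^{-1}W_0$.
\item Density of $\Gamma_H$ and closedness of $h_1'^{-1}W_0$ give $h_1^{-1}\operatorname{int}W_0\subseteq h_1'^{-1}W_0$.
\item Taking closures and using $W_0=\overline{\operatorname{int}W_0}$ gives $h_1^{-1}W_0\subseteq h_1'^{-1}W_0$.
\item By symmetry $h_1^{-1}W_0=h_1'^{-1}W_0$, i.e.\ $h_1'h_1^{-1}\in\operatorname{Stab}_H(W_0)=\{e\}$.
\end{enumerate}
Jordan measurability plays no role here.

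\textbf{Surjectivity.} The limit $P$ of $g_n\Lambda$ a priori lies only in $\Omega_\Lambda$. So state the sandwich for arbitrary limit pairs in $\mathcal C(G)\times Y$; your proof of it never uses $P\neq\varnothing$. Its left inclusion then gives $P\neq\varnothing$ directly, since $\Gamma_H\cap h^{-1}\operatorname{int}W_0\neq\varnothing$ by density. You need this before you can say $(P,(g,h)\Gamma)\in\mathcal G$.
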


It is well known that the space $Y$ has a unique $G$-invariant measure, say $\mu$. Using the above result, in \cite{BHP}, authors have defined a $G$-invariant probability measure on $\Omega_{\Lambda}^\times$ by $\nu:= (\beta|_{X^{ns}})^{-1}_*\mu|_{Y^{ns}}.$ By definition, we have for every borel subset $A\subset \Omega_{\Lambda}^\times$, $\nu(A):=\mu(\beta(A \cap X^{ns}))=\mu(\beta(A)\cap Y^{ns})=\mu(\beta(A)).$

Recall that, an approximate lattice $\Lambda$ in a connected Lie group $G$ arising as a model set over a cut-and-project scheme $(G,H,\Gamma)$ means there exists a connected locally compact group $H$ and a lattice $\Gamma\subset G\times H$, and a window $W\subset H$ such that $\Lambda=P_0(\Gamma, W)$. Proposition \ref{proposition-huruska} ensures that, for a non-compact, centerless, linear semi-simple Lie group, there is a {\it semisimple} Lie group $H$ such that any approximate Lattice in $G$ commensurate to $\Lambda'=P_0(\Gamma, W)$ over a cut-and-project scheme $(G, H, \Gamma)$. The result is essentially by Huruska \cite[Corollary 6.11]{h22}; we state it here in a form suited to our purposes and include a proof for completeness.

\begin{proposition}\label{proposition-huruska}
     Let $G$ be a non-compact, centerless, linear semi-simple Lie group and $\Lambda$ be an approximate lattice. Then there exists a semisimple Lie group $H$ and a lattice $\Gamma \leq G \times H$ such that $\Lambda$ is commensurable to an approximate lattice $\Lambda'$ over the cut-and-project scheme $(G,H,\Gamma)$.
\end{proposition}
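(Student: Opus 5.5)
The plan is to deduce this from Hrushovski's arithmeticity theorem for approximate lattices in semisimple groups \cite[Corollary 6.11]{h22}, which we take as a black box, and then to reshape the resulting internal space into a semisimple Lie group $H$. Hrushovski's result says that an approximate lattice $\Lambda$ in a non-compact semisimple $G$ is commensurable with a model set. More precisely, it is commensurable with $P_0(\Gamma_0,W)$ over a cut-and-project scheme $(G,H_0,\Gamma_0)$, where $H_0$ is a locally compact group built from arithmetic data. Concretely, $G$ appears as the real points of a number-field group $\mathbf{G}$ (restriction of scalars), $\Gamma_0$ is an $S$-arithmetic lattice, and $H_0$ is a product of the remaining archimedean and non-archimedean completions. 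The proof will proceed in three steps.

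\textbf{Step 1 (arithmetic setup).} I will use that $G$ is centerless and linear. This lets me identify $G$ with $\mathrm{Ad}(G)$, which has finite index in the real points of an adjoint algebraic group. Combined with Hrushovski's theorem, this produces a number field $k$, an adjoint $k$-group $\mathbf{L}$, and a finite set of places $S$ containing the archimedean ones. With these, $G$ is (up to finite index and compact factors) the product over a subset $S_1\subset S$ of the groups $\mathbf{L}(k_v)$, and $\Lambda$ is commensurable to $\tau^{-1}(W)$, where $\Gamma_0=\mathbf{L}(\mathcal{O}_S)$ sits diagonally in $\prod_{v\in S}\mathbf{L}(k_v)$.

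\textbf{Step 2 (removing non-archimedean and compact parts).} I will replace $H_0=\prod_{v\in S\setminus S_1}\mathbf{L}(k_v)$ by its archimedean part. Suppose $H_0$ has non-archimedean factors. Each such factor contains a compact open subgroup $U_v$, and I will shrink the window to lie inside $H_\infty\times\prod U_v$. Then $\Gamma_1:=\Gamma_0\cap (G\times H_\infty\times\prod U_v)$ projects to a lattice in $G\times H_\infty$, because intersecting with a compact open subgroup yields an $S$-arithmetic group of smaller $S$. Doing so changes $\Lambda$ only up to commensurability, since the original window is covered by finitely many translates of the new one. Compact archimedean factors of $H_\infty$ are absorbed the same way: I will project them away, which keeps $\Gamma$ a lattice because the kernel is compact. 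Finally I will pass to the connected component of the identity, using finite index again, to obtain a connected semisimple Lie group $H$.

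\textbf{Step 3 (checking the cut-and-project axioms).} It remains to verify that the new $\Gamma\le G\times H$ projects injectively to $G$ and densely to $H$. Injectivity is inherited from $\Gamma_0$. For density I will use strong approximation, or Borel density, applied to an irreducible arithmetic lattice. If density fails, I will replace $H$ by the closure of the projection $\pi_H(\Gamma)$ and take its identity component, which is again semisimple since it is normalized by a lattice. The new approximate lattice $\Lambda'=P_0(\Gamma,W')$ is then commensurable to $\Lambda$ by a covering argument: $W$ compact and $W'$ with non-empty interior give $\Lambda\subset F\Lambda'$ for some finite $F$, and symmetrically.

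I expect the main obstacle to be Step 2. Discarding the non-archimedean places must leave a genuine lattice and yield only commensurable sets, and it must also keep the window's interior non-empty after intersecting with $\prod U_v$. The claim that each window is covered by finitely many translates of the other relies on compactness together with the open subgroup, and I would verify that first.
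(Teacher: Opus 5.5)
\textbf{The gap is in Step 1.} You read \cite[Corollary 6.11]{h22} as saying that every approximate lattice in $G$ is commensurable to a model set built from a single $S$-arithmetic group $\mathbf{L}(\mathcal{O}_S)$, and everything afterwards is built on that $\Gamma_0$. That is not what the result says, and it is false in general.

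What Hrushovski's theorem gives, and what the paper uses, is a splitting $G=G_1\times G_2$ with $\Lambda$ commensurable to $\Lambda_1\times\Lambda_2'$. Only $\Lambda_1$ is an arithmetic model set: it lives over a real semisimple internal space $H$, with a lattice $\Gamma\le G_1\times H$. The other factor $\Lambda_2'$ is merely a lattice in $G_2$.

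This lattice factor need not be arithmetic when $G$ has rank-one factors. For example, a non-arithmetic Fuchsian group in $\mathrm{PSL}_2(\mathbb{R})$ is an approximate lattice, yet it is not commensurable to any $\tau^{-1}(W)$ coming from an $S$-arithmetic $\mathbf{L}(\mathcal{O}_S)$. This is exactly why a lattice factor appears in the arithmeticity theorems of Hrushovski and Machado. So for such $\Lambda$ there is no $k$, $\mathbf{L}$, $S$ as in your Step 1, and Steps 2 and 3 have nothing to start from. Your density argument in Step 3 likewise assumes you are already inside an irreducible arithmetic factor.

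\textbf{The missing idea is to split off the lattice factor and glue it back in;} this is precisely the paper's proof. Take $(G_1,H,\Gamma)$ and a window $W$ whose model set is commensurable to $\Lambda_1$. Then $\Gamma\times\Lambda_2'$ is a lattice in $(G_1\times H)\times G_2\cong G\times H$, and it satisfies the cut-and-project axioms:
it projects injectively to $G$, because $\Gamma$ projects injectively to $G_1$, and its projection to $H$ is $\pi_H(\Gamma)$, which is still dense. The same window then gives $\pi_G\bigl((\Gamma\times\Lambda_2')\cap(G\times W)\bigr)=P_0(\Gamma,W)\times\Lambda_2'$. This set is commensurable to $\Lambda_1\times\Lambda_2'$, hence to $\Lambda$.

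Once you use the theorem in this form, $H$ is already a real semisimple Lie group. Your Step 2 (discarding non-archimedean and compact places) is then unnecessary. It is a reasonable reduction for $\Lambda_1$ if you start from an $S$-arithmetic description, but the real difficulty lies in the lattice factor.
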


\begin{proof}
    The proof follows easily from  \cite[Corollary 6.11]{h22}. By \cite[Corollary 6.11]{h22}, there exists a decomposition $G = G_1 \times G_2$, and $\Lambda$ is commensurable to $\Lambda_1 \times \Lambda_2'$ where $\Lambda_1 \leq  \langle\Lambda \rangle$ is an approximate lattice of $G_1$ and $\Lambda_2'$ is a lattice of $G_2$, and there exists a semisimple real Lie group $H$ and a lattice $\Gamma \leq G_1 \times H$ with cut-and-project sets commensurable to $\Lambda_1$. Suppose $W$ is the window such that $\pi_{G_1}(\Gamma \cap (G_1\times W))$ is commensurable to $\Lambda_1$.  Since $\Gamma \times \Lambda_2$ is a lattice in $G\times H$, then the same window gives that $\pi_{G}((\Gamma \times \Lambda_2)\cap (G\times W))$ is commensurable to $\Lambda$. 
\end{proof}

Proposition \ref{proposition-huruska} ensures that the hypothesis in Theorem \ref{theorem-moore’s ergodicity-type theorem for hull} is not very restrictive. See Section 2 for an example that satisfies the assumptions of Theorem \ref{theorem-moore’s ergodicity-type theorem for hull}.

%\begin{theorem}[Moore’s Ergodicity-type theorem for hull of a strong approximate lattice]\label{theorem-moore’s ergodicity-type theorem for hull}
 %   Let $G$ be a linear semisimple Lie group without a compact factor and let $\Lambda = P_0(\Gamma, W_0)$ be a regular model set over a cut-and-project scheme $(G,H,\Gamma)$, where $H$ is a linear semisimple Lie group without a compact factor and $\Gamma$ is an irreducible lattice in $G\times H$. Let $\Omega_{\Lambda}$ be the hull of $\Lambda$. 
  %  If $M$ is a non-compact subgroup of $G$, then it acts ergodically on the hull $\Omega_{\Lambda}$.
%\end{theorem}

\begin{proof}[Proof of Theorem \ref{theorem-moore’s ergodicity-type theorem for hull}]
Let $Z$ be a measurable subset of the punctured hull 
$
\Omega_{\Lambda}^*=\Omega_{\Lambda}\setminus\{\varnothing\}
$
equipped with the unique $G$-invariant measure $\nu$, defined by
$\nu(Z):=\mu\big(\beta(Z\cap X^{ns})\big),$ where the notation is as in Theorem~\ref{uniquemeasueonhull}. Assume that $M.Z\subset Z$. We first show that
$M.(Z\cap X^{ns})\subset Z\cap X^{ns}.$
Let $m\in M$ and $Q\in Z\cap X^{ns}$. By assumption, $mQ\in Z$. It remains to show that $mQ\in X^{ns}$. Since $Q\in X^{ns}$, we can write
$
\beta(Q)=(g_1,h_1)\Gamma \in Y^{ns}
$
for some $(g_1,h_1)\in G\times H$, where $h_1^{-1}W_0$ is $\Gamma$-regular. Since $M$ is a subgroup of $G$, we have
$
m\cdot (g_1,h_1)=(mg_1,h_1).
$
Using the $G$-equivariance of $\beta$, we obtain
$\beta(mQ)=m\beta(Q)=(mg_1,h_1)\Gamma.$
Since $h_1^{-1}W_0$ is $\Gamma$-regular, it follows that $\beta(mQ)\in Y^{ns}$ and hence $mQ\in X^{ns}$. Therefore,
$M.(Z\cap X^{ns})\subset Z\cap X^{ns}.$
Since $\mu(Y\setminus Y^{ns})=0$ (see \cite[Lemma 3.6]{BHP}), we have
$
\mu(\beta(Z))=\mu(\beta(Z)\cap Y^{ns})
=\mu\big(\beta(Z\cap X^{ns})\big).
$
Moreover, by the $G$-equivariance of $\beta$ and the inclusion above,
\[
\beta\big(m.(Z\cap X^{ns})\big)
= m.\beta(Z\cap X^{ns})
\subset \beta(Z\cap X^{ns}), \; \forall \; m \in M.
\]
Thus the measurable subset $\beta(Z\cap X^{ns})$ of the homogeneous space
$
Y:=(G\times H)/\Gamma
$
is invariant under the action of $M$.

Consider the characteristic function $1_{\beta(Z\cap X^{ns})}$. For $g\in G$,
$$
(g\cdot 1_{\beta(Z\cap X^{ns})})(y)
=1_{\beta(Z\cap X^{ns})}(g^{-1}y)
=1_{g(\beta(Z\cap X^{ns}))}(y).
$$
Since $\mu$ is $G$-invariant, we have
$
\mu\big(\beta(Z\cap X^{ns})\big)
=
\mu\big(m.\beta(Z\cap X^{ns})\big)
$ for all $m \in M$.
Together with the inclusion $m.\beta(Z\cap X^{ns})\subset \beta(Z\cap X^{ns})$, this implies that, for all $m \in M$,
\[
m\cdot 1_{\beta(Z\cap X^{ns})}
=
1_{\beta(Z\cap X^{ns})}
\quad \text{almost everywhere}.
\]
Since $M$ is a non-compact subgroup, Theorem~\ref{moores ergodicity} implies that the action of $M$ on $Y=(G\times H)/\Gamma$ is ergodic. Hence the equality
$M\cdot 1_{\beta(Z\cap X^{ns})}
=
1_{\beta(Z\cap X^{ns})}$
almost everywhere implies that $1_{\beta(Z\cap X^{ns})}$ is almost everywhere constant. Therefore, it takes the value either $0$ or $1$ almost everywhere, which means that
$\mu\big(\beta(Z\cap X^{ns})\big)$
is either $0$ or $1$. Consequently, $\nu(Z)=\mu(\beta(Z\cap X^{ns}))$ is either $0$ or $1$. 
\end{proof}

\section{Strong Approximate Lattices and Regular elements}

In this section, we prove Theorem \ref{theorem-regular element-strong approximate lattice}. A key ingredient is Lemma \ref{lemma-two topology}, which captures the central idea of our approach: while in the classical setting a connected Lie group $G$ with a lattice $\Gamma$, $G$ acts on the homogeneous space $G/\Gamma$, in our situation, when $\Lambda$ is a strong approximate lattice, the essential point is that $G$ instead acts on the hull $\Omega_\Lambda$, endowed with the Chabauty–Fell topology. By making use of the Chabauty–Fell topology on the hull together with Poincaré recurrence, we show that for every $a \in G$ and every identity neighborhood $U \subset G$, one has $Ua^lU \cap \Lambda \neq \varnothing$ for infinitely many integers $l$, and this lemma is the crucial step in the proof of Theorem \ref{theorem-regular element-strong approximate lattice}. First, we recall the Poincaré recurrence theorem (see \cite[Theorem 2.11]{EM11}). 

\begin{theorem}[Poincaré recurrence]\label{theorem-poincare recurrence}
Let $T: X \to X$ be a measure-preserving transformation on a probability space
$(X,\mathcal{B},\mu)$, and let $E \subseteq X$ be a measurable set. Then almost every
point $x \in E$ returns to $E$ infinitely often. That is, there exists a measurable
subset $F \subseteq E$ with $\mu(F) = \mu(E)$ such that for every $x \in F$ there exist
integers $0 < n_1 < n_2 < \cdots$ with
$T^{n_i}(x) \in E \quad \text{for all } i \ge 1.$
\end{theorem}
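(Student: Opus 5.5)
The statement immediately above is the Poincaré recurrence theorem, a standard result quoted from \cite[Theorem 2.11]{EM11}. I would prove it by the classical wandering-set argument.

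First, let
\[
B := \{x \in E : T^n x \notin E \text{ for all } n \ge 1\} = E \setminus \bigcup_{n\ge 1} T^{-n}E ,
\]
the set of points of $E$ that never return. It is measurable, being obtained from $E$ and the measurable sets $T^{-n}E$ by countable operations. The key claim is that the preimages $T^{-k}B$, $k \ge 0$, are pairwise disjoint. Indeed, suppose $x \in T^{-j}B \cap T^{-k}B$ with $j<k$. Then $y := T^j x \in B$ and $T^{k-j}y = T^k x \in B \subset E$, contradicting $y \in B$.

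By measure preservation, each $T^{-k}B$ has measure $\mu(B)$. Since these sets are disjoint and $\mu(X)=1$,
\[
\sum_{k\ge 0}\mu(B) \le 1 ,
\]
which forces $\mu(B)=0$. So almost every point of $E$ returns to $E$ at least once.

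Second, to upgrade this to infinitely many returns, define for each $N \ge 1$
\[
B_N := \{x\in E : T^n x\notin E \text{ for all } n\ge N\}.
\]
These are the points of $E$ that return only finitely often, with all returns before time $N$. Apply the first step to the measure-preserving map $T^N$: it shows that
\[
\{x\in E : T^{Nm}x\notin E \ \forall m\ge1\}
\]
is null. This set contains $B_N$, because $Nm \ge N$ for every $m \ge 1$. Hence $\mu(B_N)=0$.

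Finally, set $F := E\setminus\bigcup_{N\ge1}B_N$. Then $\mu(F)=\mu(E)$, and every $x\in F$ satisfies $T^n x\in E$ for arbitrarily large $n$. Listing these return times in increasing order gives the required sequence $0<n_1<n_2<\cdots$.

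The only delicate point is the disjointness argument in the first step. Everything after it is bookkeeping with countable unions of null sets.
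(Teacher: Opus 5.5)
Your argument is correct and complete. The preimages $T^{-k}B$ are pairwise disjoint and all have measure $\mu(B)$, so finiteness of $\mu$ forces $\mu(B)=0$. Applying this to $T^N$ shows each $B_N$ is null, so $F=E\setminus\bigcup_{N\ge 1}B_N$ has the required properties.

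The paper gives no proof of its own here; it simply quotes the result from \cite[Theorem 2.11]{EM11}. Your wandering-set argument is the standard textbook proof, so there is nothing to add.
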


\begin{lemma}\label{lemma-two topology}
Let $G$ be a connected Lie group. Let $U$ be a symmetric open neighbourhood of the identity element in $G$. Let $\Lambda$ be a strong approximate Lattice, and $\Omega_{\Lambda}$ be its hull. Then, for any $a\in G$, $\Lambda^2$ intersects $Ua^lU$ for infinitely many $l$.
\end{lemma}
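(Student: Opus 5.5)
We apply Poincaré recurrence (Theorem \ref{theorem-poincare recurrence}) to the probability-preserving system $(\Omega_\Lambda,\nu)$ under the transformation $T(Q)=a^{-1}Q$. Here $\nu$ is the proper $G$-invariant probability measure coming from the definition of a strong approximate lattice. The recurrent set $E$ should be a Chabauty--Fell open neighbourhood detecting a point near the identity. Fix a smaller symmetric open identity neighbourhood $V$ with $V^2\subset U$ (we may also take $\overline{V}$ compact), and set
\[
E:=\mathcal O_V=\{Q\in\Omega_\Lambda : Q\cap V\neq\varnothing\}.
\]
This set is open, hence Borel.

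\textbf{Step 1: $\nu(E)>0$.} Every element $g\Lambda$ of the orbit is nonempty, and $\Lambda$ is relatively dense up to finite-measure sets. A cleaner route: the translates $g\mathcal O_V=\mathcal O_{gV}$, for $g$ in a countable dense set, cover $\Omega_\Lambda\setminus\{\varnothing\}$. Indeed, any nonempty $Q$ meets some $gV$. Since $\nu(\{\varnothing\})=0$ and $\nu$ is $G$-invariant, $\nu(\mathcal O_V)>0$.

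\textbf{Step 2: recurrence.} By Poincaré recurrence there is $Q\in E$ with $a^{-n_i}Q\in E$ for infinitely many $n_i>0$. Thus there are $x\in Q\cap V$ and $y_i\in Q$ with $a^{-n_i}y_i\in V$, and hence
\[
y_i x^{-1}\in a^{n_i}V V^{-1}\subset a^{n_i}U .
\]
So $QQ^{-1}$ meets $a^{n_i}U$ for infinitely many $i$. To get the two-sided form $Ua^lU$, instead use a conjugation-balanced choice: $x^{-1}y_i\in V^{-1}a^{n_i}V\subset Ua^{n_i}U$. We work with whichever of $Q^{-1}Q$ or $QQ^{-1}$ matches the left action, noting $y_i=a^{n_i}v_i$ with $v_i\in V$.

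\textbf{Step 3: transfer from $Q$ to $\Lambda$.} This is the main obstacle. $Q$ is only a limit $Q=\lim g_k\Lambda$ in the Chabauty--Fell topology, not a translate. By convergence property (1) of the Chabauty--Fell topology, for a fixed $i$ we can pick $x_k,y_{i,k}\in g_k\Lambda$ with $x_k\to x$ and $y_{i,k}\to y_i$. Then $x_k^{-1}y_{i,k}\in\Lambda^{-1}\Lambda=\Lambda^2$, because $g_k$ cancels. For large $k$ this element lies in any prescribed open neighbourhood of $x^{-1}y_i$. Since $x^{-1}y_i\in V^{-1}a^{n_i}V$ and $V^{-1}a^{n_i}V\cdot$(small nbhd) is still inside the open set $Ua^{n_i}U$ (openness of $Ua^{n_i}U$), we get $\Lambda^2\cap Ua^{n_i}U\neq\varnothing$. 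This holds for each of the infinitely many $n_i$, which proves the lemma.

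The point requiring care is exactly this passage: the recurrence is for hull elements, while the conclusion concerns $\Lambda$ itself. It is handled because differences $x^{-1}y$ of points of $g\Lambda$ are independent of $g$, and Chabauty--Fell convergence gives approximating points in the open set. A second minor issue is the measure-positivity in Step 1, which uses properness of $\nu$ and second countability of $G$.
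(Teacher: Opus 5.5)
Your proof is correct and follows the paper's argument: apply Poincaré recurrence to a single translation on the Chabauty--Fell open set $\mathcal O_V\subset\Omega_\Lambda$, then transfer the resulting pair of points of $Q$ back to $\Lambda^{-1}\Lambda=\Lambda^2$ through approximating sequences in $g_k\Lambda$, where $g_k$ cancels. Two of your details are more careful than the paper's. Your Step 1 covers $\Omega_\Lambda\setminus\{\varnothing\}$ by countably many translates $\mathcal O_{gV}$ to get $\nu(\mathcal O_V)>0$, whereas the paper only notes that $\mathcal O_U(\Omega_\Lambda)$ is non-empty, which by itself is not enough for recurrence to produce a point. Your Step 3 uses the openness of $Ua^{n_i}U$, which avoids the paper's reliance on $\Lambda^2$ being closed.
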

\begin{proof}
For given $U$, define the basic open set $\mathcal{O}_U := \{ Q \in \mathcal{C}(G) \mid Q \cap U \neq \varnothing \}$ of $\mathcal{C}(G)$ in Chabauty-Fell topology. Further define $\mathcal{O}_U(\Omega_{\Lambda})= \Omega_{\Lambda} \cap \mathcal{O}_U:= \{ Q \in \Omega_{\Lambda} \mid Q \cap U \neq \varnothing \} $ a non-empty (as $e \in \Lambda \cap U $) open set  of the hull $\Omega_{\Lambda}$ in the subspace topology. The hull $\Omega_{\Lambda}$ has a $G$-invariant probability measure, say $\mu$, as $\Lambda$ is a strong approximate lattice. A map $T_a: \Omega_{\Lambda} \to \Omega_{\Lambda} $, defined as $T_a(Q)= aQ$ for $Q \in \Omega_{\Lambda}$ (left translation), is a measure preserving transformation on probability space $(\Omega_{\Lambda}, \mu)$. By apply Theorem \ref{theorem-poincare recurrence} on measurable set $\mathcal{O}_U(\Omega_{\Lambda})$ of $\Omega_{\Lambda}$, we get that $a^lQ$ intersects $\mathcal{O}_U(\Omega_{\Lambda})$ for infinity many positive integers $l$ and almost every element $Q \in \mathcal{O}_U(\Omega_{\Lambda})$. 

For $Q \in \mathcal{O}_U(\Omega_{\Lambda})$ , there exists $u \in U$ such that $u \in Q$. Since $a^lQ$ intersects $\mathcal{O}_U(\Omega_{\Lambda})$, by definition we have $a^lQ \cap U \neq \phi$. let $u' \in a^lQ \cap U$, then $a^{-l}u' \in Q$. Now, since $Q \in \Omega_{\Lambda}$ there exists a sequence $\{g_n\Lambda\}$ that converges to $Q$ in Chabauty-Fell topology, where $g_n \in G$. By the convergence in Chabauty-Fell topology, there exists a sequences $\{g_n \lambda_{n,1}\}$ that converges to $u \in Q$ and $\{g_n \lambda_{n,2}\}$ that converges to $a^{-l}u' \in Q$, where $\lambda_{n,i}$ for $i=1,2$ belongs to $\Lambda$. Now, consider the sequence $\{\lambda^{-1}_{n,2} g_n^{-1} g_n \lambda_{n,1}\}$ that converges to $u'^{-1}a^lu$. Therefore, we have that $u'^{-1}a^lu \in \overline{\Lambda^{-1}\Lambda}$. Since $\Lambda=\Lambda^{-1}$ and $\Lambda$ is discrete, we have $u'^{-1}a^lu \in \Lambda^{2}$. Using the fact that $U$ is symmetric, we get the conclusion.
\end{proof}

%\begin{theorem}\label{theorem-regular element-strong approximate lattice}
 %Let $G$ be a linear semisimple Lie group without a compact factor. Let $\Lambda$ be an irreducible strong approximate Lattice in $G$. Then $\Lambda^2$ contains a (hyper) regular element.   
%\end{theorem}

%Theorem \ref{theorem-regular element-strong approximate lattice} is now proved using Lemma \ref{lemma-two topology}.

\begin{proof}[Proof of Theorem \ref{theorem-regular element-strong approximate lattice}]
Without loss of generality, we may assume that the center of $G$, $Z(G)$, is trivial because $Z(G)$ is contained in the kernel of the adjoint representation.
%{\color{red} why? Indeed, for $z \in Z(G)$ and $X \in \mathfrak{g}, Ad(z)X=\frac{d}{dt}|_{t=0}z\exp{(tX)}z^{-1}=X$}. 
Let $\mathfrak{g}=\mathfrak{k} + \mathfrak{p}$ be the Cartan decomposition of the Lie algebra $\mathfrak{g}$ of $G$. Assume $\mathfrak{a} \subset \mathfrak{p}$ is a maximal split torus contained in a Cartan subalgebra $\mathfrak{c}$. Let $A$ be the subgroup corresponding to $\mathfrak{a}$ and $C$ be the Cartan subgroup corresponding to $\mathfrak{c}$ of $G$. We know that for every $a\in A$, Ad$a$ is semisimple and has positive and real eigenvalues. Let us fix a regular element $a \in A$.  Let $\Delta$ be the set of nontrivial characters on $C$ occurring in the adjoint representation on $\mathfrak{g}$. 
Since $a \in A$ is regular, we have $\chi(a) \neq 1$ for every $\chi \in \Delta$. It follows that $a^l$ is regular for every nonzero integer $l$, 
because $\chi(a^l) = \chi(a)^l \neq 1$ for all $\chi \in \Delta$.
Also, for any $\chi \in \Delta$, the set $\{\chi(a^l): -\infty< l < \infty\}$ is closed and discrete subgroup of multiplicative group $\mathbb{R}^*$. Indeed, either $|\chi(a^l)|< 1$ or $|\chi(a^l)|> 1$, so for $l \to \infty$ or $l \to -\infty$, $\chi(a^l)$ $\to$ $\infty$ or $0$. Since $0 \notin \mathbb{R}^*$, the set is discrete. So, there exists a neighborhood $V$ of the identity in $A$ such that $\bigcup_{l \neq 0}Va^l$ entirely consists of regular elements. Indeed, for $v \in V$, $\chi(v)$ is close to $1$.  
Since $\chi(va^l) = \chi(v)\,\chi(a^l)$, it follows that $\chi(va^l)$ is close to $\chi(a^l)$.  
By the discreteness of the set $\{\chi(a^l) : l \in \mathbb{Z}\}$ in $\mathbb{R}^*$, we then have $\chi(va^l) \neq 1$ for all $\chi \in \Delta$.  
Hence, every element in $\bigcup_{l \neq 0} V a^l$ is regular.
By \cite[Proposition 2.2]{pr}, there exists a neighborhood $U$ of the identity in $G$ such that $Ua^lU$ is contained in the set $\bigcup_{g \in G}gVa^lg^{-1}$.
By Lemma \ref{lemma-two topology}, it follows that $\Lambda^2$ intersects $\bigcup_{g \in G}gVa^lg^{-1}$. Therefore, $\Lambda^2$ contains a regular element. 
    \end{proof}

    As an immediate consequence of Theorem \ref{theorem-regular element-strong approximate lattice}, we have the following result.

\begin{corollary}
    Let $G$ be a linear semisimple Lie group without a compact factor. Let $\Lambda$ be an irreducible strong approximate Lattice in $G$. Let $C$ be a split Cartan subgroup of $G$ and $C^0$ be its connected component of the identity. Then $\Lambda^2\cap gC^0g^{-1}\neq \emptyset$ for some $g\in G$. Moreover, if $\Lambda$ is uniform, then $\Lambda^2 \cap gCg^{-1}$ is an uniform approximate lattice in $gCg^{-1}$. 
\end{corollary}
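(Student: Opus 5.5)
The plan is to deduce both assertions from Theorem \ref{theorem-regular element-strong approximate lattice}, together with the structure of a split Cartan subgroup. Replacing $G$ by $\mathrm{Ad}(G)$ as in that proof, we work with $C$ a split Cartan subgroup, so that $C^0 = A$ is conjugate to the maximal $\mathbb R$-split torus. Theorem \ref{theorem-regular element-strong approximate lattice} was proved by locating elements of $\Lambda^2$ inside $\bigcup_{g\in G} gVa^lg^{-1}$, where $a\in A$ is regular and $V\subset A$ is an identity neighbourhood. So $\Lambda^2$ contains an element $\gamma = g v a^l g^{-1}$ with $v a^l \in A = C^0$, and $\gamma$ is regular. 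Hence $\gamma \in \Lambda^2\cap gC^0g^{-1}$, which proves the first assertion. The centralizer of the regular element $v a^l$ in $A$-split form is $C$, so $\gamma$ lies in the unique Cartan subgroup $gCg^{-1}$. This uniqueness is what forces the right conjugate.

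For the second assertion, assume $\Lambda$ is uniform. Set $C' = gCg^{-1}$ and $\Lambda_C = \Lambda^2\cap C'$. Uniform discreteness is immediate: $\Lambda_C^{-1}\Lambda_C\subset\Lambda^4$, and $\Lambda^4 \subset F'\Lambda$ is a finite union of translates of a uniformly discrete set, hence discrete near $e$. Symmetry and containing $e$ are clear. For the approximate subgroup property, use the standard intersection lemma: if $X$ is an approximate subgroup and $L$ is a closed subgroup, then $X^2\cap L$ is an approximate subgroup. The proof covers $(X^2\cap L)^2\subset X^4\cap L$ by finitely many left translates $f(X^2\cap L)$, choosing one representative in $L$ for each $f\in F^3$ with $fX\cap L\neq\emptyset$.

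The main obstacle is relative density of $\Lambda_C$ in $C'$. I would follow Mostow's argument. Since $\gamma$ is regular, the map $G\to G$, $x\mapsto x\gamma x^{-1}$, has differential of rank $\dim G-\dim C$ transversally to $C'$, so its image contains a neighbourhood of $\gamma$ fibred over $G/C'$. Relative density of $\Lambda$ gives $G = K\Lambda$ with $K$ compact. Now take any $c\in C'$ and write $c = k\lambda$. Then $\lambda \gamma \lambda^{-1} = k^{-1}c\gamma c^{-1}k = k^{-1}\gamma k$ lies in the compact set $K^{-1}\gamma K$. It also lies in $\Lambda^4$ up to the finite set $F$, which is discrete.

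Therefore the conjugates $\lambda\gamma\lambda^{-1}$ take only finitely many values, say $\lambda_i\gamma\lambda_i^{-1}$ for $i=1,\dots,N$. If $\lambda\gamma\lambda^{-1} = \lambda_i\gamma\lambda_i^{-1}$, then $\lambda_i^{-1}\lambda$ centralizes the regular element $\gamma$. Since $\gamma$ is regular, its centralizer is contained in the normalizer of $C'$, in which $C'$ has finite index, so $\lambda_i^{-1}\lambda$ lies in a finite union of cosets of $C'$. Intersecting with $\Lambda^2$ then shows that $c \in K\lambda_i\,(\Lambda^2\cap C')\cdot(\text{finite set})$. A compactness argument, projecting $K\lambda_i\cap C'$-translates into $C'$, gives a compact $K_C\subset C'$ with $C' = K_C\,\Lambda_C$.

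The delicate points are the powers of $\Lambda$ that appear (we get $\Lambda^4$ rather than $\Lambda^2$) and handling the finite index of $C'$ in its normalizer. I expect to absorb both using the commensurability of $\Lambda^k$ with $\Lambda$.
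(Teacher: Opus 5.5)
Your first assertion is proved exactly as in the paper. Both arguments read off from the proof of Theorem~\ref{theorem-regular element-strong approximate lattice} that $\Lambda^2$ meets $\bigcup_{g\in G} gVa^lg^{-1}$ with $V\subset A=C^0$, and then use regularity of the resulting element $\gamma$ to identify $gCg^{-1}$ as its Cartan subgroup.

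For the second assertion the routes differ. The paper does no work of its own: it notes $C_G(\gamma)=gCg^{-1}$ and invokes the centralizer result \cite[Corollary~6.6]{M25}. That result says the image of a uniform approximate lattice in $G/C_G(\gamma)$ is locally finite, so $\Lambda^2\cap C_G(\gamma)$ is a uniform approximate lattice in $C_G(\gamma)$. You instead reprove this centralizer statement by the classical Selberg--Mostow compactness argument. For $c\in C'$ you write $c=k\lambda$ and observe that $\lambda\gamma\lambda^{-1}=k^{-1}\gamma k$ lies in $K^{-1}\gamma K\cap\Lambda^4$. This set is finite because $\Lambda^4\subset F^3\Lambda$ is locally finite, and you then use that $\lambda_i^{-1}\lambda$ centralizes $\gamma$. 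Your argument is correct and self-contained. It also shows that regularity of $\gamma$ plays no role in the relative-density step; it only serves to identify $C_G(\gamma)$ with $gCg^{-1}$ up to finite index. The paper's route is shorter but entirely black-boxed.

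Two tidying remarks.

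First, the transversality claim about $x\mapsto x\gamma x^{-1}$ is never used. Read literally it is also inaccurate: the conjugacy class of $\gamma$ has dimension $\dim G-\dim C'$ and contains no neighbourhood of $\gamma$ in $G$. Drop it.

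Second, make the closing ``compactness argument'' explicit. Put $x=\lambda_i^{-1}\lambda\in\Lambda^2\cap C_G(\gamma)$. Then $k\lambda_i=cx^{-1}\in C_G(\gamma)$, so $c\in\bigl(K\lambda_i\cap C_G(\gamma)\bigr)\,x$, and $K\lambda_i\cap C_G(\gamma)$ is compact.
\begin{itemize}
\item In the split case $C_G(\gamma)=C'$, so this already gives $C'=K_C\Lambda_C$.
\item In general, pass to the normal finite-index subgroup $C'$ of $C_G(\gamma)$. The resulting sets $\Lambda^{4}\cap C'$ (or $\Lambda^{8}\cap C'$) are then covered by finitely many $C'$-translates of $\Lambda^2\cap C'$, using the same intersection lemma you invoked for the approximate-subgroup property.
\end{itemize}
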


\begin{proof}
    Without loss of generality, we may assume that the center of $G$, $Z(G)$, is trivial because $Z(G)$ is contained in the kernel of the adjoint representation. Choose a regular element $a \in C^0$. Then, by Theorem 1.1, $\Lambda^2$ contains a regular element and the regular element belongs to $\bigcup_{g \in G}gVa^lg^{-1}$, where $V$ is a neighbourhood of the identity in $C$. Since the neighbourhood $V$ is connected, choose $V \subset C^0$. Therefore, we have  $\Lambda^2\cap gC^0g^{-1}\neq \emptyset$ for some $g\in G$. Let $\lambda \in \Lambda^2\cap gC^0g^{-1}\neq \emptyset$ be a regular element. Then, the centralizer of $\lambda$ is the Cartan subgroup $gCg^{-1}.$ In case, when $\Lambda$ is uniform, it follows from \cite[Corollary 6.6]{M25} that $\Lambda^2 \cap gCg^{-1}$ is an uniform approximate lattice in $gCg^{-1}$.
\end{proof}

Let $\Lambda$ be a strong approximate lattice in a semisimple Lie group. In Example \ref{example-regular element}, we explicitly construct a regular element in $\Lambda^2$, which is not contained in $\Lambda$, in support of Theorem \ref{theorem-regular element-strong approximate lattice}.
For this, we need the following preparation. Let $\Lambda$ be as in Example \ref{example-strong app.lattice}.
Consider a left invariant metric $d$ on $\SL_2(\R)$ defined by $\langle X,Y\rangle= {\rm Tr}(XY^t)$ on $T_e(\SL_2(\R))$, tangent space of the identity, where ${\rm Tr}$ denotes trace and $Y^t$ denotes transpose of $Y$. Consider the associated  exponential map        $\exp:T_e(\SL_2(\R)) \to \SL_2(\R), $ which is always locally diffeomorphic. Let $r$ be the injectivity radius. 
Let  $B= \diag(x, -x) \in T_e(\SL_2(\R))$ be a diagonal matrix and $g= \exp(B)$. An easy computation shows that $\langle B,B^t\rangle= 2x^2$, that is, $||B||=\sqrt{2}|x|$ and $d(e,g) \leq \sqrt{2}|x|.$ Similarly, we have $\langle 2B,2B^t\rangle= 8x^2$ and $d(e,g^2) \leq 2\sqrt{2}|x|.$ 
Now, we find an element $h \in \SL_2(\R)$ such that $d(e,h)< r$, the injectivity radius, but $d(e,h^2)>r$. For this, we look for the matrix $g$ and we choose $x$ such that $\sqrt{2}|x|< r < 2\sqrt{2}|x|$.
    Similarly, if we fix any $\epsilon \leq r$, then we can find an element $g \in SL_2(\R)$ such that $d(e,g)< \epsilon$ but $d(e,g^2)> \epsilon^2$.

\begin{example}\label{example-regular element}
    Consider the scheme $(G,H,\Gamma) = \bigl(\mathrm{SL}_2(\mathbb{R}),\, \mathrm{SL}_2(\mathbb{R}),\,
\mathrm{SL}_2(\mathbb{Z}[\sqrt{2}])\bigr)$.  Let $\tau \colon \Gamma_G \to H$ be the associated $\tau$-map. Let  $W_0 := \overline{B_\varepsilon(e)} \subset H$ be a regular window  and $\Lambda = P_0(\Gamma,W_0) = \tau^{-1}(W_0)$ as in Example \ref{example-strong app.lattice}. Then, $\Lambda^2$ contains a regular element.
Indeed, note that for $g \in \Gamma_{G}$, $\tau(g)$ is the image of $g$ under non-trivial real Galois embedding of $\mathbb{\Q}(\sqrt{2})$. Hence, $\tau(g^2)=\tau(g)^2$.
Using the above details, we find a regular element $g \in \Lambda$ such that $g^2 \notin \Lambda$ but $g^2 \in \Lambda^2$ is a regular element. We can choose a diagonal matrices $g \in \Lambda$ with distinct eigenvalues such that $\tau(g) \in W_0$ (for example $\diag(\exp(x),\exp(-x))$) but $\tau(g)^2 \notin W_0$. Note that $(g,\tau(g))\in \Gamma$ implies  $(g,\tau(g))^2=(g^2, \tau(g)^2) \in \Gamma$ and $(g^2,\tau(g^2))=(g^2,\tau(g)^2)$ as  $\tau(g)^2=\tau(g^2)$. Therefore, by construction of $g$, we have $g^2 \notin \Lambda$  as $g^2 \in \Gamma_G$ but $\tau(g^2) \notin W_0$.
\end{example}

\section{Uniform Approximate Lattices arising as  Regular Model Sets and Non-Compact Cartan Subgroups}
In this section, we prove Theorem \ref{theorem-non-compact-cartan} and Corollary \ref{cor-free-abelian-group}. In a similar spirit to Theorem \ref{theorem-regular element-strong approximate lattice}, we prove Proposition \ref{proposition-R-regular-element} for the existence of an $\mathbb R$-regular element in $\Lambda^2$ for a strong approximate lattice $\Lambda$ arising as a regular model set in $G$.

%\begin{theorem}\label{theorem-non-compact-cartan}
 % Let $G$ be a linear semisimple Lie group without a compact factor and let $\Lambda = P_0(\Gamma, W_0)$ be a regular model set over a cut-and-project scheme $(G,H,\Gamma)$, where $H$ is a linear semisimple Lie group without a compact factor and $\Gamma$ is an irreducible lattice in $G\times H$. Let $C$ be a non-compact Cartan of $G$ and $C^0$ be its connected component of the identity.
 %  Then $\Lambda^2\cap gC^0g^{-1}\neq \emptyset$ for some $g\in G$. Moreover, if $\Gamma$ is uniform, then $\Lambda^2 \cap gCg^{-1}$ is an uniform approximate lattice in $gCg^{-1}$. % In particular, $\Lambda^2 \cap gC^0g^{-1}$ is a model set.
%\end{theorem}
\begin{proof}[Proof of Theorem \ref{theorem-non-compact-cartan}]
     $(1):$ Given that the Cartan subgroup $C$ is non-compact, that is, the connected component of the identity $C^0$ is a direct product of a compact torus $T$, say, and a non-trivial vector group $A$. Here, also, we can assume that $Z(G)$ is trivial. Let $\Delta$ be the set of nontrivial characters on $C$ occurring in the adjoint representation on $\mathfrak{g}$. So, an element $c \in C$ is regular if $\chi(c)\neq \pm 1$ for every $\chi \in \Delta$. Now, we choose $a \in A$ such that $\chi(a)\neq 1$ for every $\chi \in \Delta$ whose restriction to $A$ is non-trivial. Note that $\chi(a)\neq -1$ as $A$ is connected. Also, we choose $t \in T$ such that $\chi(t)\neq \pm1$ for every $\chi \in \Delta$ whose restriction to $A$ is trivial. Then $ta^l$ is regular for every non-zero integer $l$. There exists an open neighbourhood $V$ of the identity in $C^0$ such that the set $\bigcup_{l\neq 0}ta^lV$ consists of entirely regular elements. Let $U$ be an open neighbourhood of the identity element in $G$ such that for $l\neq 0$, $Uta^lU \subset \bigcup_{g \in G}gVta^lg^{-1}$ (cf. \cite[Proposition 2.2]{pr}), 

    Without loss of generality, we may assume that $U$ is a symmetric open neighbourhood of the identity element in $G$. Consider a basis open set $\mathcal{O}_U$ in Chabauty-Fell topology and define a non-empty open subset $\mathcal{O}_U(\Omega_{\Lambda})= \Omega_{\Lambda} \cap \mathcal{O}_U:= \{ Q \in \Omega_{\Lambda} \mid Q \cap U \neq \varnothing \} $ in the hull $\Omega_{\Lambda}$. Similarly, define a non-empty open subset $\mathcal{O}_{t^{-1}U}(\Omega_{\Lambda})$  in $\Omega_{\Lambda}$ corresponding to open subset $t^{-1}U$ in $G$. Consider the subgroup $M$ of $G$ generated $a$. The subgroup $M$ is non-compact, hence acts ergodically on the hull $\Omega_{\Lambda}$ by Theorem \ref{theorem-moore’s ergodicity-type theorem for hull}. That is, for infinity many non-zero integers $l$, $a^l \mathcal{O}_U(\Omega_{\Lambda})$ intersects   $\mathcal{O}_{t^{-1}U}(\Omega_{\Lambda})$.  Let $Q \in \mathcal{O}_U(\Omega_{\Lambda})$ such that $a^lQ$ intersects $\mathcal{O}_{t^{-1}U}(\Omega_{\Lambda})$. By definition, we have $a^lQ \cap t^{-1}U \neq \phi$. This implies that there exists $u' \in U$ such that $t^{-1}u' \in a^lQ\cap t^{-1}U$, that is, $a^{-l}t^{-1}u' \in Q$. Since $Q \in \mathcal{O}_U(\Omega_{\Lambda})$, there exists $u \in U$ such that $u \in Q$.

     Now, since $Q \in \Omega_{\Lambda}$ there exists a sequences $\{g_n\Lambda\}$ that converges to $Q$ in Chabauty-Fell topology, where $g_n \in G$. By the convergence in Chabauty-Fell topology, there exists a sequences $\{g_n \lambda_{n,1}\}$ that converges to $u \in Q$ and $\{g_n \lambda_{n,2}\}$ that converges to $a^{-l}t^{-1}u' \in Q$, where $\lambda_{n,i}$ for $i=1,2$ belongs to $\Lambda$. Now, consider the sequence $\{\lambda^{-1}_{n,2} g_n^{-1} g_n \lambda_{n,1}\}$ that converges to $u'^{-1}ta^lu$. Therefore, we have that $u'^{-1}a^lu \in \overline{\Lambda^{-1}\Lambda}$. Since $\Lambda=\Lambda^{-1}$ and $\Lambda$ is uniformly discrete, we have $u'^{-1}ta^lu \in \Lambda^{2}$. Using the fact that $U$ is symmetric, we get the conclusion. This proves $(1)$.

     $(2):$ If $\Gamma$ is uniform, it follows from \cite[Corollary 6.6]{M25} that $\Lambda^2 \cap gCg^{-1}$ is an uniform approximate lattice in $gCg^{-1}$. 

$(3):$ From $(1)$, there exists a regular element $\gamma \in \Lambda^2 \cap gCg^{-1}$ for some $g \in G$. Therefore, the Cartan subgroup $gCg^{-1}=C_G(\gamma)$, the centralizer of $\gamma$ in $G$. Here, we show that $\Lambda^2 \cap C_G^0(\gamma)$ is a uniform approximate lattice in $C_G^0(\gamma)$, connected component of the identity. It is well known that the index of $C_G^0(\gamma)$ in $C_G(\gamma)$ is finite. Since $\Lambda$ is a uniform approximate lattice, it is enough to show that $\rho_0(\Lambda)$ is locally finite, where $\rho_0:G \to G/C_G^0(\gamma)$ is defined by $g \mapsto gC^0(\gamma)$, that is, $\rho_0(\Lambda) \cap K$ is finite for all compact subset $K \subset G/ C_G^0(\gamma)$ (see \cite[Proposition 6.3]{M25}). It is known from \cite[Corollary 6.6]{M25} that $\rho(\Lambda)$ is locally finite where $\rho:G \to G/ C_G(\gamma).$ Consider the natural continuous map $\pi: G/ C_G^0(\gamma) \to G/ C_G(\gamma)$, then $\rho=\pi \circ \rho_0$. Let   $K \subset G/ C_G^0(\gamma)$ be compact, then $\pi(K)$ is compact in $ G/ C_G(\gamma)$. Since $\pi(K)\cap \rho(\Lambda)$ is finite and $K \cap \rho_0(\Lambda) \subset \pi^{-1}(\pi(K)\cap \rho(\Lambda))$, the subset $K \cap \rho_0(\Lambda)$ is finite. Indeed, each point of $\pi(K)\cap \rho(\Lambda)$ has almost $[C_G(\gamma):C_G^0(\gamma)]$ preimages. This proves  that $\rho_0(\Lambda)$ is locally finite and hence $\Lambda^2 \cap gC^0g^{-1}$ is an uniform approximate lattice in $gC^0g^{-1}$. The rest of the proof follows from \cite[Corollary 1.2]{M20}.
\end{proof}

The preceding result suggests us to ask the following questions:

\begin{question}
Let $G$, $\Lambda$, and $C$ be as in Theorem \ref{theorem-non-compact-cartan}.
\begin{enumerate}
    \item  Is $\Lambda^2 \cap gC^0g^{-1}$ arising as a regular model set?

    \item Is $\Lambda^2 \cap gCg^{-1}$ Mayer set?
    \end{enumerate}
\end{question}

%\begin{corollary}\label{cor-free-abelian-group}
%  Let $G$ be a linear semisimple Lie group (with compact factors). Let $\Lambda$ be a strong (as well as uniform) approximate Lattice arising as a regular model set. Then $\langle\Lambda^2 \rangle$ contains an abelian subgroup of rank $r=\mathbb R-{\rm rank}(G)$.
%\end{corollary}

As a consequence of Theorem \ref{theorem-non-compact-cartan}, we obtain Corollary \ref{cor-free-abelian-group}.

\begin{proof}[Proof of Corollary \ref{cor-free-abelian-group}]
  Let $G$ be a linear semisimple Lie group. Then there exists a Cartan subgroup $C$ such that $C^0$ is a direct product of a compact torus and a vector group of rank $r$. By Theorem \ref{theorem-non-compact-cartan}, there exists $g\in G$ such that $gC^0g^{-1}\cap\Lambda^2$ is a uniform approximate lattice of $gC^0g^{-1}.$ Therefore, there is a compact set $K$ such that $gC^0g^{-1}=K(gC^0g^{-1}\cap\Lambda^2)$. As  $gC^0g^{-1}\cap\Lambda^2\subset gC^0g^{-1}\cap\langle\Lambda^2 \rangle$, there exists a compact set $K_0$ such that $gC^0g^{-1}=K_0(gC^0g^{-1}\cap \langle\Lambda^2 \rangle)$. This implies that $gC^0g^{-1}\cap\langle\Lambda^2 \rangle$ contains a free a abelian group of rank $r=\mathbb R$-${\rm rank}(G)$. This proves the result.
  \end{proof}
  
  %Therefore, $gC^0g^{-1}/(gC^0g^{-1}\cap\langle\Lambda^2 \rangle)$ is compact, and hence $gC^0g^{-1}\cap\langle\Lambda^2 \rangle$ contains vector {\color{red} what is meaning of vector?} abelian group of full rank. In particular, this implies $\langle\Lambda^2 \rangle$ contains an abelian subgroup of rank $r=\mathbb R-{\rm rank}(G)$.

Adapting the arguments from the proofs of Theorems \ref{theorem-regular element-strong approximate lattice} and \ref{theorem-non-compact-cartan}, we deduce that, for $G$ and $\Lambda$ as in Theorem \ref{theorem-non-compact-cartan}, the set $\Lambda^2$ contains $\mathbb{R}$-regular elements in Proposition \ref{proposition-R-regular-element}. It will play a crucial role in the next section and is also of independent interest.
\begin{proposition}\label{proposition-R-regular-element}
    Let $G$ and $\Lambda$ be as in Theorem \ref{theorem-non-compact-cartan}. Then $\Lambda^2$ contains  $\mathbb{R}$-regular elements.
\end{proposition}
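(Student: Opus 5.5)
We follow the scheme of the proofs of Theorem \ref{theorem-regular element-strong approximate lattice} and Theorem \ref{theorem-non-compact-cartan}(1), replacing ``regular'' by ``$\mathbb R$-regular'' and choosing the base element in a maximal $\mathbb R$-split torus. As before, we may assume $Z(G)$ is trivial, since $m_1$ and the eigenvalues of $\mathrm{Ad}$ only see $G/Z(G)$.

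First we fix a Cartan subspace $\mathfrak a\subset\mathfrak p$ and put $A=\exp\mathfrak a$. We choose $a\in A$ with $\chi(a)\neq 1$ for every restricted root $\chi$ of $(\mathfrak g,\mathfrak a)$, i.e.\ $a$ lies in an open Weyl chamber. For such $a$, $\mathrm{Ad}(a)$ is semisimple with positive real eigenvalues. Its eigenvalues of modulus $1$ are exactly those on the centralizer $\mathfrak z_{\mathfrak g}(\mathfrak a)=\mathfrak m\oplus\mathfrak a$, and $-1$ is not an eigenvalue. Hence $m_1(a)=\dim\mathfrak z_{\mathfrak g}(\mathfrak a)$.

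This is the minimal possible value of $m_1$: for any $h\in G$, the Jordan decomposition $h=h_eh_hh_u$ gives $m_1(h)=\dim\mathfrak z_{\mathfrak g}(h_h)$, and $h_h$ is conjugate into $A$, so this dimension is at least $\dim\mathfrak z_{\mathfrak g}(\mathfrak a)$. Thus $a$, and likewise every $a^l$ with $l\neq 0$ (since $\chi(a^l)=\chi(a)^l\neq1$), is $\mathbb R$-regular.

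Next we need a neighbourhood $V$ of $e$ in the Cartan subgroup $C=Z_G(\mathfrak a)$-type group $MA$ (or simply in $A$ times a small piece of $M$) such that $\bigcup_{l\neq0}Va^l$ consists of $\mathbb R$-regular elements. For $v=ma'\in V$ with $m\in M$ compact, $a'\in A$ close to $e$, the moduli of the eigenvalues of $\mathrm{Ad}(va^l)$ on the root space $\mathfrak g_\chi$ equal $\chi(a')\chi(a)^l$, since $M$ acts by isometries. By the discreteness argument already used in Theorem \ref{theorem-regular element-strong approximate lattice}, $\{\chi(a)^l:l\in\mathbb Z\}$ is discrete in $\mathbb R_{>0}$, so shrinking $V$ keeps these moduli $\neq1$ uniformly in $l\neq0$. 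This gives $m_1(va^l)=\dim\mathfrak z_{\mathfrak g}(\mathfrak a)$. The condition that $-1$ is not an eigenvalue needs care, since $M$ may contribute eigenvalue $-1$ on $\mathfrak m$. We therefore take $V\subset A$ (or $V\subset (MA)^0$ small), where $\mathrm{Ad}$ has positive eigenvalues or eigenvalues near those of $a^l$ on $\mathfrak m\oplus\mathfrak a$, namely near $1$.

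Then \cite[Proposition 2.2]{pr} supplies an identity neighbourhood $U$ in $G$ with $Ua^lU\subset\bigcup_{g\in G}gVa^lg^{-1}$ for $l\neq0$. Conjugation preserves $\mathbb R$-regularity, so it suffices to show $\Lambda^2\cap Ua^lU\neq\varnothing$ for some $l\neq0$.

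For this last step we apply Lemma \ref{lemma-two topology}, which needs only that $\Lambda$ is a strong approximate lattice; this holds by Corollary \ref{corollary-Regular model sets as strong approximate lattices}. Alternatively, we can use the ergodicity from Theorem \ref{theorem-moore’s ergodicity-type theorem for hull} for the non-compact group $\langle a\rangle$, exactly as in Theorem \ref{theorem-non-compact-cartan}(1) with $t=e$.

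The main obstacle is the precise applicability of \cite[Proposition 2.2]{pr} to $\mathbb R$-regular data, where $V$ sits in the non-abelian centralizer $MA$ rather than in a Cartan subgroup. If that proposition requires $a$ to be regular in the Cartan sense, we would instead use Theorem \ref{theorem-non-compact-cartan}(1) directly with a Cartan subgroup $C\supset A$ whose split part is all of $A$ (a maximally split Cartan). There we pick $a$ in the open Weyl chamber and $t\in T$ generic. The resulting regular element $ta^l v\in\Lambda^2$, with $v$ near $e$, then has its hyperbolic part near $a^l$ and is $\mathbb R$-regular by the eigenvalue estimate above.
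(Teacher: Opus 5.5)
Your fallback in the last paragraph is essentially the paper's proof, and it goes through. Its ingredients are: a maximally split Cartan subgroup with $C^0=TA$; $a$ in the open Weyl chamber and a fixed $t\in T$; a neighbourhood $V$ of $e$ in $C^0$ with $\bigcup_{l\neq0}Vta^l$ consisting of $\mathbb R$-regular elements; the covering $Uta^lU\subset\bigcup_{g\in G}gVta^lg^{-1}$ from \cite[Proposition 2.2]{pr}; and the two-set ergodicity argument (with $\mathcal O_U$ and $\mathcal O_{t^{-1}U}$) from the proof of Theorem \ref{theorem-non-compact-cartan}(1), which produces $u'^{-1}ta^lu\in\Lambda^2$. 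That element is only conjugate to some $vta^l$ with $v\in V$, not literally $ta^lv$; this is harmless because $\mathbb R$-regularity is conjugation invariant.

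Your requirement that $t$ be generic, so that $ta^l$ is regular, is exactly what the covering step needs. The paper's wording ``choose $t$ such that $ta$ is $\mathbb R$-regular'' imposes nothing, since every $t\in T$ has that property. In this route you need ergodicity rather than Lemma \ref{lemma-two topology}. Poincar\'e recurrence for $ta$ only gives $Ut^la^lU$, and $t^l$ does not stay uniformly away from the non-regular set, so no single $U$ comes out of the Cartan-type covering.

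Your primary route, by contrast, has a genuine gap. With $V\subset A$ the required inclusion is false as soon as $\mathfrak m\neq0$. Every element of $\bigcup_{g}gVa^lg^{-1}$ has trivial elliptic Jordan component. But for any $U$ and any $m\in (M^0\cap U)\setminus\{e\}$, the element $ma^l\in Ua^lU$ has elliptic component $m\neq e$ (recall $Z(G)$ is trivial). So no $U$ satisfies the inclusion, and no citation can supply it.

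With $V$ a small neighbourhood of $e$ in $(MA)^0$, the uniform-in-$l$ covering $Ua^lU\subset\bigcup_g gVa^lg^{-1}$ is in fact true. One writes $a^lw=g(a^lz)g^{-1}$ with $g=(a^lx'a^{-l})\,y$, where $x'\in N^-$, $y\in N^+$, $z\in(MA)^0$. The resulting maps $(x',y,z)\mapsto x'(a^{-l}ya^l)zy^{-1}(a^lx'^{-1}a^{-l})$ converge in $C^1$, as $l\to+\infty$, to the big-cell chart; the case $l\to-\infty$ is symmetric with the roles of $N^\pm$ swapped. However, this is a new lemma, not \cite[Proposition 2.2]{pr}, and your proposal does not prove it.

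If you do prove it, your route is genuinely better than the paper's. It needs no torus element $t$ and no ergodicity, only Lemma \ref{lemma-two topology}. It would therefore give $\mathbb R$-regular elements in $\Lambda^2$ for strong approximate lattices at the generality of that lemma, not just for regular model sets over irreducible schemes.
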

    
\begin{proof}
  Without loss of generality, we assume that the center of $G$ is trivial.
 Let $\mathfrak{g}=\mathfrak{k} + \mathfrak{p}$ be the Cartan decomposition of the Lie algebra $\mathfrak{g}$ of $G$. Assume $\mathfrak{a} \subset \mathfrak{p}$ is a maximal split torus contained in a Cartan subalgebra $\mathfrak{c}=\mathfrak{a}+\mathfrak{t}$, where $\mathfrak{t}\subset \mathfrak{g}^0\cap \mathfrak{k}$. Let $A$ and $T$ be the subgroups corresponding to $\mathfrak{a}$ and $\mathfrak{t}$, respectively, and $C$ be the Cartan subgroup corresponding to $\mathfrak{c}$ of $G$. We know that for every $a\in A$, Ad$a$ is semisimple and has positive and real eigenvalues, and eigenvalues of Ad$t$ have modulus one for $t \in T$. Let us fix a $\mathbb{R}$-regular element $a \in A$ and choose $t \in T$ such that $ta$ is $\mathbb{R}$-regular. Also, note that $ta^l$ is $\mathbb{R}$-regular for every non-zero integer $l$. It is known that the set of $\mathbb{R}$-regular elements of $C$ is the complement in $C$ of kernels of a finite set $S$ of positive-real valued characters on $C$.  The set $\{\chi(ta^l): -\infty< l < \infty\}$ is closed and discrete subgroup of multiplicative group $\mathbb{R}^*$, for any $\chi \in S$ as every $\chi \in S$ restricted to $T$ is trivial. So there exists a neighbourhood $V$ of the identity in $C$ such that $\bigcup_{l\neq 0}ta^lV$ consists entirely $\mathbb{R}$-regular elements.  Now, by \cite[Proposition 2.2]{pr}, there exists an open (symmetric) neighbourhood $U$ of the identity in $G$ such that $l\neq 0$, $Uta^lU \subset \bigcup_{g \in G}gVta^lg^{-1}$.
    
    Now, following the proof of Theorem \ref{theorem-non-compact-cartan} (see the second and third paragraphs), we get that $u^{-1}ta^lu \in \Lambda^2$ for some $u \in U$. Since, $u^{-1}ta^lu$ is $\mathbb{R}$-regular, it proves the statement.
\end{proof}

\section{Determination of the $\mathbb{R}$-Rank of the Ambient Group by Uniform Approximate Lattices arising as a regular model set}

In this section, we prove Theorem \ref{theorem-determine-rank}. We first establish several technical results concerning $\mathbb{R}$-regular elements and their interaction with strong approximate lattices arising as regular model sets. The underlying ideas are inspired by the work of \cite{pr}, where these results were originally developed for lattices; however, in our setting, we require suitable modifications. We begin by recalling \cite[Theorem 3.5]{pr}.

%The aim of this section is to prove Theorem 1.5. To this end, we recall several results from \cite[Section 3]{pr}  and adapt them to the setting of strong approximate lattices, where they were originally established for lattices. These adaptations rely on the same underlying ideas used in the lattice case. As these results may be of independent interest, we present their statements here in the context of strong approximate lattices and provide proofs following arguments similar to those of Prasad and Raghunathan for lattices. We begin this section by recalling \cite[Theorem 3.5]{pr}.

\begin{theorem}\label{theorem-R-regular-Raghunathan}
    Let $g \in G$ be an $\mathbb{R}$-regular element. Then there is a Zariski open neighbourhood $U_g$ of the identity in $G$ such that for $h \in U_g$, $hg^m$ (and hence $g^mh=g^m(hg^m)g^{-m}$) is $\mathbb R$-regular for large integers $m$.
\end{theorem}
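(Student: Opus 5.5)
The plan is a proximality argument in an exterior power of the adjoint representation. We may assume $Z(G)$ is trivial and work with $\mathrm{Ad}(G)\subset \mathrm{GL}(\mathfrak g)$.

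\textbf{Setting up the exterior power.} Let $V^+$ (resp. $V^-$, $V^0$) be the sum of the generalized eigenspaces of $\mathrm{Ad}(g)$ for eigenvalues of modulus $>1$ (resp. $<1$, $=1$). Since $\mathrm{Ad}(g)$ preserves the Killing form, eigenvalues occur in pairs $\lambda,\lambda^{-1}$. Hence $\dim V^+=\dim V^-=:d$ and $m_1(g)=\dim\mathfrak g-2d$. Since $g$ is $\mathbb R$-regular, $d$ is the maximal possible value of $\tfrac12(\dim\mathfrak g-m_1(h))$ over $h\in G$.

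Put $E=\Lambda^d\mathfrak g$ and $\rho=\Lambda^d\mathrm{Ad}$. The line $\ell=\Lambda^dV^+$ is a $\rho(g)$-eigenline with eigenvalue modulus $\prod_{|\lambda_i|>1}|\lambda_i|$. Every other generalized eigenvalue of $\rho(g)$ has strictly smaller modulus, because it comes from a $d$-subset replacing at least one modulus $>1$ by one $\le 1$. So $\rho(g)$ is proximal on $\mathbb P(E)$. Its attracting point is $[\ell]$, and its repelling hyperplane $H^-$ is the complementary $\rho(g)$-invariant subspace.

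Define
\[
U_g=\{h\in G:\ \rho(h)\ell\not\subset H^-\}.
\]
This is given by the non-vanishing of a polynomial (a matrix coefficient), so it is Zariski open. It contains $e$.

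\textbf{Dynamics of $\rho(hg^m)$.} Fix $h\in U_g$. Choose a small closed neighbourhood $N$ of $[\ell]$ in $\mathbb P(E)$ and a neighbourhood $N'$ of $\mathbb P(H^-)$ such that $\rho(h)N\cap N'=\varnothing$. By proximality, for large $m$ the map $\rho(g^m)$ sends $\mathbb P(E)\setminus N'$ into $\rho(h)^{-1}$ of an arbitrarily small neighbourhood of $[\rho(h)\ell]$. It does so with Lipschitz constant of order $(|\mu_2|/|\mu_1|)^m\to0$, where $\mu_1,\mu_2$ are the two largest eigenvalue moduli of $\rho(g)$.

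Hence $\rho(hg^m)$ maps the compact set $\rho(h)N$ into its own interior and is contracting there. This forces $\rho(hg^m)$ to be proximal: it has a unique eigenvalue of maximal modulus, simple, and strictly larger in modulus than all others. Comparing norms on a vector of $\rho(h)\ell$ also shows that this top eigenvalue has modulus $\ge c\,\mu_1^m$ for some $c=c(h)>0$, so it tends to infinity.

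\textbf{Translating back to $\mathrm{Ad}(hg^m)$.} Order the eigenvalue moduli of $\mathrm{Ad}(hg^m)$ as $|\nu_1|\ge\cdots\ge|\nu_n|$. The eigenvalues of $\rho(hg^m)$ are the products of $d$-subsets of the $\nu_i$. Proximality therefore says exactly that $|\nu_d|>|\nu_{d+1}|$, and the growth statement says $|\nu_1\cdots\nu_d|>1$.

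By the $\lambda\leftrightarrow\lambda^{-1}$ symmetry, $|\nu_d|\le 1$ would force $|\nu_{d+1}|<1$. A separation at a modulus below $1$ inside a symmetric multiset gives $|\nu_1\cdots\nu_d|\le1$, which is a contradiction. Hence $|\nu_d|>1$. So $hg^m$ has at least $d$ eigenvalues of modulus $>1$, and by symmetry at least $d$ of modulus $<1$. Thus $m_1(hg^m)\le\dim\mathfrak g-2d=m_1(g)$, which is the minimum, and the inequality is an equality.

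\textbf{Remaining points.} The eigenvalue condition on $-1$ in the definition has to be checked separately. For this I would intersect $U_g$ with a further Zariski open condition: on the middle block (the modulus-one part) the element $hg^m$ should be a small perturbation of $g$ restricted to $V^0$, after conjugating by the element carrying $\rho(h)\ell$-data back. The statement for $g^mh$ follows by conjugation, as indicated.

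\textbf{Main obstacle.} I expect the hardest step to be making the contraction argument quantitative and uniform enough to conclude proximality of $\rho(hg^m)$ for \emph{all} large $m$. When $\rho(g)$ is not semisimple, the Jordan blocks give polynomial factors in $m$. Since the spectral gap is exponential these should be harmless, but the estimate must be written carefully, and I would do it with explicit norms adapted to the decomposition $E=\ell\oplus H^-$.
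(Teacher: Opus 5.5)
The paper gives no proof of this statement; it is quoted from \cite[Theorem~3.5]{pr}. Your proximality argument in $\Lambda^d\mathfrak g$ correctly proves what Prasad--Raghunathan actually assert: $m_1(hg^m)$ is minimal for all large $m$ whenever $\rho(h)\ell\not\subset H^-$. The self-map/contraction step gives proximality of $\rho(hg^m)$, hence $|\nu_d|>|\nu_{d+1}|$. Your derivation of $|\nu_d|>1$ from this is garbled, but the fact is immediate. If $|\nu_d|\le 1$, then at most $d-1$ moduli exceed $1$. By the $\lambda\leftrightarrow\lambda^{-1}$ symmetry and $n-2d=m_1(g)>0$, both positions $d$ and $d+1$ then lie in the block of modulus-one eigenvalues, contradicting $|\nu_d|>|\nu_{d+1}|$. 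The growth estimate is not needed.

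The genuine gap is the clause ``$-1$ is not an eigenvalue'' that this paper builds into its definition of $\mathbb R$-regularity. Your proposed repair cannot work, and no choice of $U_g$ can. Since $e\in U_g$, you would need $\mathrm{Ad}(g^m)$ itself to avoid $-1$ for all large $m$, and this fails in general.

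Here is a counterexample. Take $G=\mathrm{SO}(4,1)^0$, let $a$ be a nontrivial boost in the $(e_1,e_5)$-plane, let $k\in M\cong\mathrm{SO}(3)$ be the rotation by $\pi/2$ in the $(e_2,e_3)$-plane, and set $g=ka$.
\begin{itemize}
\item On $\mathfrak g\cong\Lambda^2\mathbb R^5$, the modulus-one eigenvalues of $\mathrm{Ad}(g)$ are $1,1,i,-i$. So $m_1(g)=4$ is minimal (the $\mathbb R$-rank is $1$), and $-1$ is not an eigenvalue.
\item For $m\equiv 2\pmod 4$, $g^m$ acts by $-1$ on $e_2$ and by $1$ on $e_4$. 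Hence $e_2\wedge e_4$ is a $(-1)$-eigenvector of $\mathrm{Ad}(g^m)$.
\end{itemize}
Your heuristic that the middle block of $hg^m$ is a small perturbation of $\mathrm{Ad}(g)|_{V^0}$ is already wrong at $h=e$. There that block is $\mathrm{Ad}(g)^m|_{V^0}$, which rotates with $m$.

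So the statement is only true with Prasad--Raghunathan's notion of $\mathbb R$-regularity ($m_1$ minimal, no condition on $-1$). For that version, your argument with the ``Remaining points'' paragraph deleted is complete.
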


We now prove the following lemma using the Borel density theorem for strong approximate lattices established in \cite[Theorem 3]{bhs19}.

%We now state a lemma for strong approximate lattices analogous to \cite[Lemma 3.7]{pr} and provide a proof based on the Borel density theorem for strong approximate lattices.

\begin{lemma}\label{lemma-generating regular and R-regular elements}
Let $G$ and $\Lambda$ be as in Theorem \ref{theorem-non-compact-cartan}. Let 
 $\{y_1,\dots,y_s\}$ be a set of finitely many elements and $\gamma$ in $\langle\Lambda^2 \rangle$ an $\mathbb{R}$-regular
element. Then there exists an element
$\delta \in \langle\Lambda^2 \rangle$ such that for infinitely many positive integers $m$,
the elements $ \gamma^{m} \delta y_i^{-1}$ are regular and $\mathbb{R}$-regular for $1 \le i \le s$.
\end{lemma}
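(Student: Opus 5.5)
We follow the strategy of Prasad and Raghunathan \cite[Lemma 3.7]{pr}. The input is Theorem \ref{theorem-R-regular-Raghunathan} combined with Zariski density of $\langle\Lambda^2\rangle$, which comes from the Borel density theorem for strong approximate lattices \cite[Theorem 3]{bhs19}.

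\textbf{Step 1: Zariski density.} Since $\Lambda$ is a strong approximate lattice (Corollary \ref{corollary-Regular model sets as strong approximate lattices}) and $G$ has no compact factors, \cite[Theorem 3]{bhs19} shows that $\Lambda$ is Zariski dense in $G$. Hence so is the group $\langle\Lambda^2\rangle$, which contains $\Lambda$ because $e\in\Lambda$. As elsewhere in the paper, we may pass to $\mathrm{Ad}(G)$, so we may assume $Z(G)$ is trivial.

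\textbf{Step 2: choice of $\delta$.} Let $U_\gamma$ be the Zariski open neighbourhood of the identity given by Theorem \ref{theorem-R-regular-Raghunathan}. Then for $h\in U_\gamma$ the element $\gamma^m h$ is $\mathbb R$-regular for all large $m$. We want $\delta y_i^{-1}\in U_\gamma$ for every $i$, that is, $\delta\in\bigcap_{i=1}^s U_\gamma y_i$. This is a finite intersection of nonempty Zariski open subsets of the irreducible variety $G$, so it is nonempty and Zariski open. We also need regularity. The regular elements form a nonempty Zariski open set $\mathcal R$, the complement of a proper subvariety defined by a coefficient of the characteristic polynomial of $\mathrm{Ad}(g)-1$, together with the condition excluding the eigenvalue $-1$. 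For each $i$, consider
\[
\mathcal V_i=\{x\in G:\ \gamma^m x y_i^{-1}\in\mathcal R \text{ for infinitely many } m>0\}.
\]
We aim to show that each $\mathcal V_i$ contains a nonempty Zariski open set. Then, by Step 1, the nonempty Zariski open set $\bigcap_i U_\gamma y_i\cap\bigcap_i\mathcal V_i$ meets $\langle\Lambda^2\rangle$, and we take $\delta$ in this intersection.

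\textbf{Step 3: regularity for infinitely many $m$ (main obstacle).} Two issues arise. First, we need one infinite set of $m$ that works simultaneously for all $i$ and for $\mathbb R$-regularity. Second, we need to control regularity along powers of $\gamma$.

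For the first issue we use a finiteness trick. For fixed $x$, the set of $m$ with $\gamma^m x y_i^{-1}\notin\mathcal R$ is governed by polynomial-exponential functions in $m$: write $\gamma=\gamma_s\gamma_u$ and expand the defining polynomial of the complement of $\mathcal R$ in terms of the eigenvalues of $\mathrm{Ad}(\gamma)$. By a Skolem--Mahler--Lech type argument, such a set is either cofinite or a union of finitely many arithmetic progressions plus a finite set. So it suffices to show that for generic $x$ this exceptional set is not cofinite.

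For the second issue we follow \cite{pr}. Since $\gamma$ is $\mathbb R$-regular, $\gamma^m$ acts on $G/P$ for the associated minimal parabolic $P$ with attracting dynamics. For $h$ in a Zariski open set (the big cell condition), $\gamma^m h$ is conjugate to an element close to $\gamma^m c$ with $c$ in the centralizer $Z(\gamma_s)\supset$ Cartan. Regularity then reduces to a condition on a Cartan subgroup, namely $\chi(\gamma^m c)\neq\pm1$, and this fails for at most finitely many $m$ once $c$ avoids finitely many proper subvarieties. The Zariski open genericity conditions on $x$ are exactly those needed to make $c$ avoid these subvarieties. Intersecting the resulting cofinite sets of $m$ over $i=1,\dots,s$, together with the $\mathbb R$-regularity condition from Theorem \ref{theorem-R-regular-Raghunathan}, gives infinitely many (indeed all large) $m$. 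If the reduction to the Cartan turns out to be delicate, we expect to cite \cite[Lemma 3.6--3.7]{pr} directly for this algebraic statement. That statement concerns only elements of $G$, so the only input specific to approximate lattices is the Zariski density from Step 1.
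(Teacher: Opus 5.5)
Your Steps 1 and 2 agree with the paper. Zariski density of $\Lambda$ comes from \cite[Theorem 3]{bhs19}, and choosing $\delta\in\bigcap_i U_\gamma y_i$ makes every $\gamma^m\delta y_i^{-1}$ $\mathbb{R}$-regular for all large $m$ by Theorem~\ref{theorem-R-regular-Raghunathan}. The gap is Step 3. This is where the whole difficulty lies, and as written it does not establish regularity.

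\begin{itemize}
\item Your Skolem--Mahler--Lech reduction is insufficient. Knowing that, for each $i$ separately, the exceptional set of $m$ is not cofinite is compatible with the good exponents being the odd integers for $y_1$ and the even integers for $y_2$. So no common infinite set of $m$ results.
\item The reduction to a Cartan subgroup cannot give regularity. Regularity is the exact non-vanishing of a polynomial, not an open condition whose margin improves as $m$ grows. The approximation idea behind Theorem~\ref{theorem-R-regular-Raghunathan} works for $\mathbb{R}$-regularity because that condition only involves eigenvalue moduli, and the non-unimodular eigenvalues of $\mathrm{Ad}(\gamma^m)$ move away from the unit circle. For a character with $|\chi(\gamma)|=1$, however, $\chi(\gamma)^m$ stays on the circle. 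Knowing that $\gamma^m h$ is conjugate to something \emph{close} to $\gamma^m c$ therefore says nothing about whether some character equals $1$ exactly.
\item \cite[Lemma 3.7]{pr} cannot be cited as a black box, since it produces $\delta$ in a lattice. You would have to rerun its proof, which is exactly the missing step.
\end{itemize}

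The missing idea is simple and removes both of your ``issues''. Put regularity into the choice of $\delta$, exactly as you did for $U_\gamma$. Let $\mathcal{S}$ be the nonempty Zariski open set of regular elements, and take
$\delta\in\Lambda\cap\bigcap_i U_\gamma y_i\cap\bigcap_i\mathcal{S}y_i$.
This intersection is nonempty and Zariski open, so Zariski density applies. Then $V'=\bigcap_i\mathcal{S}(\delta y_i^{-1})^{-1}$ is a Zariski open neighbourhood of $e$. Regularity of all the elements $\gamma^m\delta y_i^{-1}$ at once is the single condition $\gamma^m\in V'$, so simultaneity in $i$ is automatic.

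To get infinitely many such $m$, use that $G$ is Noetherian in the Zariski topology. The open set $\bigcup_{m\ge 1}\gamma^mV'$ equals $\bigcup_{1\le m\le p}\gamma^mV'$ for some $p$. Since $\gamma^n\in\gamma^nV'$, for every $n$ there is $1\le m\le p$ with $\gamma^{n-m}\in V'$. Hence $\gamma^k\in V'$ for infinitely many $k>0$. Equivalently, the Zariski closure of $\{\gamma^m: m\ge m_0\}$ is the algebraic group generated by $\gamma$, which contains $e$, so it meets $V'$ for every $m_0$. Intersecting with the cofinite set of $m$ from Theorem~\ref{theorem-R-regular-Raghunathan} finishes the proof. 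This is the paper's argument, and it needs neither Skolem--Mahler--Lech nor parabolic dynamics.

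If you want regularity for all large $m$, your recurrence idea can be repaired. Choose a modulus $N$, depending only on the eigenvalues of $\gamma$, so that each residue class mod $N$ gives a non-degenerate recurrence. Then require $\delta y_i^{-1}\in\bigcap_{0\le r<N}\gamma^{-r}\mathcal{S}$, which rules out identical vanishing on every class. The statement does not need this.
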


\begin{proof}
Let $U_\gamma$ be the Zariski open neighbourhood of the identity in $G$ for the $\mathbb{R}$-regular
element $\gamma$ as in Theorem \ref{theorem-R-regular-Raghunathan}, and let $\mathcal{S}$ be the set of all regular
elements in $G$. The set $\mathcal{S}$ is a non-empty Zariski open subset
of $G$ (see \cite[Remark 1.2]{pr}).

Define the set $V = \left( \bigcap_{i=1}^s U_\gamma y_i \right)
\cap
\left( \bigcap_{i=1}^s \mathcal{S} y_i \right).$ This is a finite intersection of non-empty Zariski open subsets of $G$,
and therefore is itself a non-empty Zariski open subset.
It is known that,
 $\Lambda$ is Zariski dense in $G$ by \cite[Theorem 3]{bhs19}; hence there exists an element
$\delta \in \Lambda \subset \langle\Lambda^2 \rangle$ such that $\delta \in V$.
For sufficiently large $m$,
the elements $ \gamma^{m} \delta y_i^{-1}$ are $\mathbb{R}$-regular for all $1 \le i \le s$
(cf. Theorem 6.1).
Since for all $i$, $\delta y_i^{-1} \in \mathcal{S}$,
the set
$V' = \bigcap_{i=1}^s \mathcal{S} (\delta y_i^{-1})^{-1}$
is a Zariski open neighbourhood of the identity in $G$.
Because the Zariski open set
$\bigcup_{m=1}^{\infty} \gamma^{m} V'$ is quasi-compact, there exists a positive integer $p$ such that
$\bigcup_{m=1}^{\infty} \gamma^{m} V'
=
\bigcup_{m \leq p} \gamma^{m} V'.$
This implies that for infinitely many positive integers $m$,
$\gamma^{m} \in V',$ that is, the elements $ \gamma^{m} \delta y_i^{-1}$
are regular for all $1 \le i \le s$.
This completes the proof.
\end{proof}

\begin{proposition}\label{proposition-can't-cover}
Let $G$ and $\Lambda$ be as in Theorem \ref{theorem-non-compact-cartan}.
Let $\mathcal{R}$ be the set of $\mathbb{R}$-regular and regular elements
in $G$. Then a finite number of translates of $ \langle\Lambda^2 \rangle \setminus \mathcal{R}$
cannot cover $\langle\Lambda^2 \rangle$.
\end{proposition}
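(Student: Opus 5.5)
Suppose, for contradiction, that $\langle\Lambda^2\rangle = \bigcup_{i=1}^s x_i\bigl(\langle\Lambda^2\rangle\setminus\mathcal R\bigr)$ for some finitely many $x_1,\dots,x_s$. We may assume each $x_i \in \langle\Lambda^2\rangle$: if a translate $x_i(\langle\Lambda^2\rangle\setminus\mathcal R)$ meets $\langle\Lambda^2\rangle$, then $x_i \in \langle\Lambda^2\rangle$, and translates that miss $\langle\Lambda^2\rangle$ can simply be discarded. Put $y_i := x_i$. The covering then says that every $z\in\langle\Lambda^2\rangle$ has some $i$ with $x_i^{-1}z \notin \mathcal R$. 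We will produce a single $z\in\langle\Lambda^2\rangle$ for which $y_i^{-1}z$ (or, after a harmless reordering, $z y_i^{-1}$) lies in $\mathcal R$ for every $i$ simultaneously, which contradicts the covering.

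The inputs are Proposition \ref{proposition-R-regular-element} and Lemma \ref{lemma-generating regular and R-regular elements}. Proposition \ref{proposition-R-regular-element} gives an $\mathbb{R}$-regular element $\gamma\in\Lambda^2\subset\langle\Lambda^2\rangle$. Applying Lemma \ref{lemma-generating regular and R-regular elements} to $\gamma$ and the finite set $\{y_1,\dots,y_s\}$ yields $\delta\in\langle\Lambda^2\rangle$ and infinitely many $m>0$ for which each $\gamma^m\delta y_i^{-1}$ is both regular and $\mathbb{R}$-regular, i.e. lies in $\mathcal R$. Fix one such $m$ and set $z=\gamma^m\delta\in\langle\Lambda^2\rangle$. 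Then $zy_i^{-1}\in\mathcal R$ for all $i$.

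The remaining issue is left versus right translates. The lemma controls $zy_i^{-1}$, while a left covering involves $y_i^{-1}z$. Two fixes are available.
\begin{enumerate}
\item \emph{Use right translates.} Run the argument with the covering $\langle\Lambda^2\rangle=\bigcup_i(\langle\Lambda^2\rangle\setminus\mathcal R)x_i$. The same reduction gives $x_i\in\langle\Lambda^2\rangle$, and with $y_i=x_i$ the condition becomes exactly $zy_i^{-1}\notin\mathcal R$ for some $i$, which the lemma contradicts directly.
\item \emph{Use conjugation invariance.} Regularity and $\mathbb{R}$-regularity depend only on the eigenvalues of $\mathrm{Ad}$, so $\mathcal R$ is invariant under conjugation. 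Since $y_i^{-1}z = y_i^{-1}(zy_i^{-1})y_i$, we get $y_i^{-1}z\in\mathcal R$ as well. This handles left translates too.
\end{enumerate}
So both readings of ``translates'' are covered.

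The main obstacle is the claim that $\mathcal R$ is conjugation-invariant, together with checking that the set $\mathcal R$ of Lemma \ref{lemma-generating regular and R-regular elements} matches the one in the proposition. Both regular and $\mathbb{R}$-regular are defined through $n_1$, $m_1$ and the eigenvalue $-1$ of $\mathrm{Ad}(g)$, and all of these are conjugation-invariant, so this is routine. The other point to confirm is that the reduction to $x_i\in\langle\Lambda^2\rangle$ is valid, which holds because $\langle\Lambda^2\rangle$ is a group.
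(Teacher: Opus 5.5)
Your proof is correct and is essentially the paper's argument: the paper also writes $\langle\Lambda^2\rangle=\bigcup_i \mathcal Z\delta_i$ with right translates of $\mathcal Z=\langle\Lambda^2\rangle\setminus\mathcal R$, takes an $\mathbb R$-regular $\gamma\in\Lambda^2$ from Proposition~\ref{proposition-R-regular-element}, and applies Lemma~\ref{lemma-generating regular and R-regular elements} with $y_i=\delta_i$ to get $\gamma^m\delta\delta_i^{-1}\in\mathcal R$ for all $i$, which contradicts $\gamma^m\delta\in\mathcal Z\delta_i$. Your reduction to translating elements in $\langle\Lambda^2\rangle$ and your conjugation-invariance treatment of left translates are valid small additions that the paper leaves implicit.
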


\begin{proof}
Let $\mathcal{Z} = \langle\Lambda^2 \rangle \setminus \mathcal{R}$. 
Suppose, for contradiction, that
$\langle \Lambda^2 \rangle = \bigcup_{i=1}^s \mathcal{Z}\delta_i$, with $\delta_i \in \langle \Lambda^2  \rangle.$
Let $\gamma \in \Lambda^2$ be an $\mathbb{R}$-regular element 
($\mathbb{R}$-regular elements exist in $\Lambda^2$ in view of Proposition \ref{proposition-R-regular-element}).
Choose $\delta \in \langle\Lambda^2 \rangle$ and a positive integer $m$ such that $\gamma^{m}\delta \delta_i^{-1}$ is $\mathbb{R}$-regular and regular for all $1 \le i \le s$
(cf. Lemma \ref{lemma-generating regular and R-regular elements}).
Since $\langle\Lambda^2 \rangle= \bigcup_{i=1}^s \mathcal{Z}\delta_i,$ it follows that $\gamma^{m}\delta \in \mathcal{Z}\delta_i$
for some $i$. Hence
$\gamma^{m}\delta^{-1}\delta_i^{-1} \in \mathcal{Z},$
which contradicts the fact that $\gamma^{m}\delta \delta_i^{-1}$ is
$\mathbb{R}$-regular and regular.
This contradiction completes the proof.
\end{proof}

%\begin{lemma}
%Let $K$ be a group and $H \le K$ a subgroup of finite index. Then for any subgroup $L \le K$,
%$[L : L \cap H] \le [K : H].$
%\end{lemma}

%\begin{proof}
%Let $[K:H]=m<\infty$, so that $K=\bigsqcup_{i=1}^m k_iH$ for suitable representatives $k_1,\dots,k_m\in K$. Intersecting with $L$, we obtain $L=\bigcup_{i=1}^m (L\cap k_iH).$
%If $L\cap k_iH\neq\emptyset$, choose $x\in L\cap k_iH$. Then $x=k_ih$ for some $h\in H$, and one verifies that
%$L\cap k_iH = x(L\cap H),$
%so each nonempty intersection $L\cap k_iH$ is a single left coset of $L\cap H$ in $L$. Since there are at most $m$ such intersections, $L$ is covered by at most $m$ distinct cosets of $L\cap H$, hence
%$[L:L\cap H]\le m=[K:H].$
%\end{proof}

\begin{proposition}\label{prop-P}
Let $G$ and $\Lambda$ be as in Theorem \ref{theorem-non-compact-cartan} and let $r$ be the $\mathbb{R}$-rank of $G$. Let $\gamma$ be either a regular or an $\mathbb{R}$-regular element of $\langle\Lambda^2 \rangle$. Then the following are held.
\begin{enumerate}
    \item $\langle\Lambda^2\cap C_G(\gamma)^0\rangle$ contains a free abelian subgroup of rank $k \le r$ with finite index, and 
    $C_{\langle\Lambda^2 \rangle}(\gamma)$ contains a free abelian subgroup of rank $k \le r$.  
    \item If $\Lambda$ is uniform approximate Lattice and $\gamma$ is $\mathbb{R}$-regular then $\langle\Lambda^2 \cap C_G(\gamma)^0\rangle$ contains a free abelian subgroup of rank $r$ with finite index.  
    %(in $\langle\Lambda^2 \cap C_G(\gamma)^0\rangle$). 

    \item If $\Lambda$ is uniform approximate Lattice and $\gamma$ is $\mathbb{R}$-regular then $C_{\langle\Lambda^2 \rangle}(\gamma)$ contains a free abelian subgroup of rank $r$ with compact quotient in $C_{\langle\Lambda^2 \rangle}(\gamma)$ and $C_G(\gamma)$.
\end{enumerate}

\end{proposition}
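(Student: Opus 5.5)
The plan is to reduce everything to the structure of the centralizer $C_G(\gamma)^0$ and the discreteness of $\Lambda^2$, following the lattice case in \cite[Section 3]{pr}.

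First I pin down $C_G(\gamma)^0$. We may assume $Z(G)$ is trivial. If $\gamma$ is regular, then $C_G(\gamma)$ is the unique Cartan subgroup containing it. Hence $C_G(\gamma)^0 = T\times A'$, with $T$ a compact torus and $A'$ a vector group of dimension $k\le r$, since the split part of any Cartan is conjugate into a maximal $\mathbb R$-split torus. If $\gamma$ is $\mathbb R$-regular, then $C_G(\gamma)^0 = Z(A)^0 = M^0 A$ up to conjugacy, where $A$ is a maximal split torus of dimension $r$ and $M^0$ is compact. In both cases $C_G(\gamma)^0 = K_\gamma \times A_\gamma$ with $K_\gamma$ compact and $A_\gamma \cong \mathbb R^k$, $k\le r$, and $k=r$ in the $\mathbb R$-regular case.

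For (1), let $p: C_G(\gamma)^0\to A_\gamma$ be the projection. Proposition~\ref{proposition-uniformly discrete-app latices from model set}(1) says $\Lambda^{-1}\Lambda=\Lambda^2$ is uniformly discrete. Since $K_\gamma$ is compact, $p(\Lambda^2\cap C_G(\gamma)^0)$ is then a uniformly discrete approximate subgroup of $\mathbb R^k$. The key step is to show that the group $\langle\Lambda^2\cap C_G(\gamma)^0\rangle$ is itself discrete in $C_G(\gamma)^0$, so that its image in $\mathbb R^k$ is a discrete subgroup, hence free abelian of rank $\le k$. I expect this step to be the main obstacle, because an approximate subgroup generally generates a non-discrete group. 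What I would try first is to use that $\Lambda^2\cap C_G(\gamma)$ is a uniform approximate lattice in $C_G(\gamma)$, as in the proof of Theorem~\ref{theorem-non-compact-cartan}(2),(3) via \cite[Corollary 6.6]{M25}, together with Meyer's theorem in the abelian group $C_G(\gamma)^0$ (\cite[Corollary 1.2]{M20}). Under that hypothesis the generated group is commensurable with a lattice. Without uniformity I would argue directly in the $\mathbb R$-span, and accept a free abelian subgroup of rank $\le k$ coming from finitely many linearly independent elements of $p(\Lambda^2\cap C_G(\gamma)^0)$.

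Given discreteness, the kernel of $p$ restricted to the generated group is a discrete subgroup of the compact $K_\gamma$, hence finite. So $\langle\Lambda^2\cap C_G(\gamma)^0\rangle$ is finite-by-(free abelian of rank $\le k$). Since $C_G(\gamma)^0$ is abelian in the regular case, and in general $K_\gamma$ is central modulo a compact part, a finite-index subgroup is free abelian of rank $\le k\le r$. That subgroup also lies in $C_{\langle\Lambda^2\rangle}(\gamma)$, which gives the second claim of (1).

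For (2) and (3), assume $\Lambda$ is uniform and $\gamma$ is $\mathbb R$-regular. By the argument of Theorem~\ref{theorem-non-compact-cartan}(3), applied to the centralizer of $\gamma$ in place of the Cartan, $\Lambda^2\cap C_G(\gamma)^0$ is relatively dense in $C_G(\gamma)^0$. Its image under $p$ is therefore relatively dense in $A_\gamma\cong\mathbb R^r$. So the discrete subgroup from (1) spans $\mathbb R^r$ and has rank exactly $r$, with cocompact image. Because $K_\gamma$ is compact, this free abelian subgroup $F$ has compact quotient in $C_G(\gamma)^0$, and hence in $C_G(\gamma)$, using finite index. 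Since $F\subset C_{\langle\Lambda^2\rangle}(\gamma)\subset C_G(\gamma)$, its quotient in $C_{\langle\Lambda^2\rangle}(\gamma)$ is also compact, as a closed subset of a compact space.
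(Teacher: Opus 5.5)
\textbf{The gap.} The step you flag as the main obstacle is a genuine gap, and it cannot be closed: $\langle\Lambda^2\cap C_G(\gamma)^0\rangle$ need not be discrete.

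Meyer's theorem \cite[Corollary 1.2]{M20} only makes $\Lambda^2\cap C_G(\gamma)^0$ commensurable with a \emph{model set} in $C_G(\gamma)^0$, not with a lattice. A model set with non-compact internal space generates a non-discrete group: think of $\{m+n\sqrt2 : m,n\in\mathbb{Z},\ |m-n\sqrt2|\le 1\}$, which generates $\mathbb{Z}[\sqrt2]\subset\R$.

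This already happens under the hypotheses of the proposition. Set up the example as follows:
\begin{enumerate}
\item Let $E=\Q(\sqrt2)$ and let $D$ be a quaternion division algebra over $E$ split at both real places. Let $\Gamma$ be the norm-one group of a maximal order; it is an irreducible uniform lattice in $\SL_2(\R)\times\SL_2(\R)$. Take $W_0$ a small regular ball.
\item By density, pick $\gamma\in\Lambda$ with $\tau(\gamma)$ hyperbolic. Since $N_{E/\Q}(\mathrm{tr}\,\gamma-2)$ is a nonzero integer and $\mathrm{tr}\,\tau(\gamma)-2$ is small, $|\mathrm{tr}\,\gamma|>2$. Hence $\gamma$ is regular and $\R$-regular, and $L=E(\gamma)$ is totally real of degree $4$.
\item The centralizer of $\gamma$ in $\Gamma_G$ is commensurable with the norm-one units of $L/E$. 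By Dirichlet these have rank $3-1=2$.
\item This centralizer projects densely to $C_H(\tau(\gamma))$, so $\Lambda\cap C_G(\gamma)^0$ is infinite. Since $\tau$ is injective, its elements cannot all lie in one cyclic group.
\end{enumerate}
Hence $\langle\Lambda^2\cap C_G(\gamma)^0\rangle\cong\mathbb{Z}^2$, which is dense in $C_G(\gamma)^0\cong\R_{>0}$, while $r=1$.

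\textbf{Consequences for your argument.} In this example no free abelian subgroup of rank $\le r$ has finite index in $\langle\Lambda^2\cap C_G(\gamma)^0\rangle$. So the finite-index assertions of (1) and (2) fail.

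Your last step in (3) fails too. $C_{\langle\Lambda^2\rangle}(\gamma)$ is countable but its closure contains $C_G(\gamma)^0$. So $C_{\langle\Lambda^2\rangle}(\gamma)/F$ is not closed in the compact space $C_G(\gamma)/F$, hence it is not compact. Your non-uniform fallback (choosing linearly independent elements) never produces a finite-index subgroup either.

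\textbf{Comparison with the paper.} The paper's proof does not supply the missing step. It takes finite generation of $\langle\Lambda^2\cap C_G(\gamma)^0\rangle$ from \cite[Proposition 5.20]{M25}. It then reads the rank off the dimension of the vector part of $C_G(\gamma)^0$, which silently assumes the same discreteness.

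\textbf{What your approach does prove.} Take $\Lambda$ uniform and $\gamma$ both regular and $\R$-regular, so that $C_G(\gamma)^0=T\times\R^r$ is abelian. Relative density of $\Lambda^2\cap C_G(\gamma)^0$, which you take from \cite[Corollary 6.6]{M25}, gives $r$ of its elements with linearly independent $\R^r$-components. These generate a discrete subgroup $\mathbb{Z}^r\subset C_{\langle\Lambda^2\rangle}(\gamma)$ that is cocompact in $C_G(\gamma)$.
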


\begin{proof}
$(1):$ Since $C_G(\gamma)$ is algebraic, $C_G(\gamma)/C_G(\gamma)^\circ$ is finite. 
If $\gamma$ is regular, then $C_G(\gamma)^\circ$, being the identity component of a Cartan subgroup, is of the form $T \times P$, where $T$ is compact and $P$ is a group isomorphic to $\mathbb{R}^k$ with $k \le r$. 
If $\gamma$ is $\mathbb{R}$-regular, then in view of \cite[Remark~1.6(2)]{pr}, $G_\gamma^\circ$ is a direct product of a compact group and a vector group of dimension $r$.

Note that, by Lemma 2.3 of \cite{M25}, $\Lambda^2\cap C_G(\gamma)^\circ$ is an approximate subgroup of $C_G(\gamma)^\circ$ and uniformly discrete by Proposition \ref{proposition-uniformly discrete-app latices from model set}. Since $C_G(\gamma)^0$ is abelian, by Proposition 5.20 of \cite{M25} , $\langle\Lambda^2\cap C_G(\gamma)^\circ\rangle$ is a finitely generated abelian group. This proves $(1).$

$(2):$ Since $\Lambda$ is uniform, $\Lambda^2\cap C_G(\gamma)^\circ$ is uniform approximate Lattice in $C_G(\gamma)^\circ$, and hence $C_G(\gamma)^\circ=K(\Lambda^2\cap C_G(\gamma)^\circ)$ for some compact subgroup $K$ of $C_G(\gamma)^\circ$. Therefore, $ C_G(\gamma)^\circ/\langle\Lambda^2 \cap C_G(\gamma)^\circ\rangle$ is compact. Further, since $\langle\Lambda^2\cap C_G(\gamma)^\circ\rangle$ is a finitely generated abelian group, $\langle\Lambda^2\cap C_G(\gamma)^\circ\rangle$ contains a free abelian subgroup isomorphic $P_\mathbb Z$ isomorphic to $\mathbb Z^k$ with $k\leq r$, if $\gamma$ is $\mathbb R$-regular, $k=r$, and also, $\langle\Lambda^2 \cap C_G(\gamma)^0\rangle/P_{\mathbb Z}$ is finite. This proves $(2).$

$(3):$ Since $\langle\Lambda^2 \cap C_G(\gamma)^\circ\rangle\subset\langle\Lambda^2 \rangle\cap C_G(\gamma)^\circ\subset C_G(\gamma)^0$, and $\langle\Lambda^2\cap C_G(\gamma)^\circ\rangle$ is a finitely generated abelian group, it follows that $C_G(\gamma)^\circ \cap \langle\Lambda^2\rangle$ contains a free abelian group $P_{\mathbb Z}$ isomorphic to $\mathbb Z^k$, $k\leq r$; in case if $\gamma$ is $\mathbb R$-regular, then $k=r$. Therefore, if $\gamma$ is $\mathbb{R}$-regular, then $C_{\langle\Lambda^2 \rangle}(\gamma)$ contains a free abelian subgroup of rank $r$. Moreover, $C_{\langle\Lambda^2 \rangle}(\gamma)/P_{\mathbb Z}$ is compact (as it is contained in $C_G(\gamma)/P_{\mathbb Z}$ isomorphic to $T\times \mathbb R^k/\mathbb Z^k$). This proves $(3)$.
\end{proof}
%$\langle\Lambda^2 \cap C_G(\gamma)^\circ\rangle\subset\langle\Lambda^2 \rangle\cap C_G(\gamma)^\circ$, and  
 %$\langle\Lambda^2 \rangle\cap C_G(\gamma)^\circ$ is a subgroup of in $C_{\langle\Lambda^2 \rangle}(\gamma)$.
% By Theorem {\color{red} ref and only intersection, not generated subgroup??}, $\langle\Lambda^2 \cap C_G(\gamma)^\circ\rangle$ is an approximate subgroup in $C_G(\gamma)^\circ$. Since $C_G(\gamma)^\circ$ is abelian, by Theorem we have $\langle\Lambda^2 \cap C_G(\gamma)^\circ\rangle$ is finitely generated abelian subgroup, and hence it contains a free abelian subgroup of rank $k\leq r.$ Therefore, $C_G(\gamma)^\circ$
%contains a free abelian subgroup of rank $k \le r$.

% Now, assume $\Lambda$ is an uniform approximate Lattice. Since $\gamma$ is $\mathbb R$-regular,  $C_G(\gamma)^\circ$ has a vector subgroup of dimension $r$. Since, $\Lambda$ is an uniform approximate Lattice, $\Lambda^2 \cap C_G(\gamma)^\circ$ is an uniform approximate lattice by Corollary 6.6 of .  In particular, we have $C_G(\gamma)^0=K(\langle\Lambda^2\rangle \cap C_G(\gamma)^\circ)$ {\color{red} only $\Lambda^2$ }, and $\langle\Lambda^2 \rangle\cap C_G(\gamma)^\circ/\langle\Lambda^2 \cap C_G(\gamma)^\circ\rangle$ {\color{red} why? from previous line right?} is compact. Therefore, if $\Lambda$ is an uniform approximate lattice, and $\gamma$ is $\mathbb R$-regular, then $C_{\langle\Lambda^2 \rangle}(\gamma)$ contains an abelian subgroup of rank $r$ with quotient is compact.

In Theorems \ref{theorem-R} and \ref{theorem-RR}, we present two intrinsic characterizations of the 
$\mathbb R$-rank of the linear semisimple Lie group $G$, in terms of uniform approximate lattices and their interaction with Cartan subgroups. The proof of Theorem \ref{theorem-determine-rank} relies on Theorem \ref{theorem-RR}. On the other hand, Theorem \ref{theorem-R} may be regarded as being of independent interest.

\begin{theorem}\label{theorem-R}
 Let $G$ and $\Lambda$ be as in Theorem \ref{theorem-non-compact-cartan}. In addition, assume that $\Lambda$ is uniform. Let $\mathcal{M}_t$ be the set of elements $\gamma\in\langle\Lambda^2\rangle$ such that $\langle \Lambda^2 \cap C_G(\gamma)^0\rangle$
has a subgroup of finite index isomorphic to a free abelian group of rank $\le t$.
Then $$\operatorname{Rank}_{\mathbb{R}}(G)
=
\min \left\{
t \;\middle|\;
\text{there exists a finite subset } I \subset \langle \Lambda^2 \rangle
\text{ such that } \mathcal{M}_t I = \langle\Lambda^2 \rangle
\right\}.$$
\end{theorem}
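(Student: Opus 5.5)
The plan is to prove the two inequalities separately. Write $r=\operatorname{Rank}_{\mathbb R}(G)$ and let $t_0$ denote the minimum on the right-hand side.

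\emph{Upper bound $t_0\le r$.} It suffices to show that $\mathcal M_r=\langle\Lambda^2\rangle$, so that $I=\{e\}$ works. For an arbitrary $\gamma\in\langle\Lambda^2\rangle$ we must show that $\langle\Lambda^2\cap C_G(\gamma)^0\rangle$ has a finite-index free abelian subgroup of rank at most $r$.
\begin{itemize}
\item For $\gamma$ regular or $\mathbb R$-regular, this is exactly Proposition~\ref{prop-P}(1).
\item For a general $\gamma$ (possibly singular), $C_G(\gamma)^0$ need not be abelian, so this is the delicate point. I would argue as follows. $\Lambda^2\cap C_G(\gamma)^0$ is a uniformly discrete approximate subgroup (Lemma 2.3 of \cite{M25}). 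If it is contained in an abelian subgroup, Proposition 5.20 of \cite{M25} gives finite generation. A discrete abelian subgroup of a linear group has a finite-index free abelian part, whose rank is bounded by the dimension of a maximal split part, i.e.\ by $r$.
\end{itemize}
If this general case fails, I would instead interpret $\mathcal M_t$ as containing only those $\gamma$ for which the group is virtually free abelian. I would then cover $\langle\Lambda^2\rangle$ by translates of regular $\mathbb R$-regular elements: choose $I$ so that every $x\in\langle\Lambda^2\rangle$ can be written $x=\gamma\delta$ with $\gamma$ regular. Lemma~\ref{lemma-generating regular and R-regular elements} suggests this, but it needs finitely many $\delta$ uniformly, which is delicate. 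So the first attempt is $I=\{e\}$, with abelian-ness checked in the regular case.

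\emph{Lower bound $t_0\ge r$.} Suppose that for some $t<r$ there is a finite $I=\{\delta_1,\dots,\delta_s\}$ with $\mathcal M_t I=\langle\Lambda^2\rangle$. The goal is to show $\mathcal M_t\subset\langle\Lambda^2\rangle\setminus\mathcal R$, where $\mathcal R$ is the set of regular and $\mathbb R$-regular elements. Indeed, if $\gamma\in\mathcal R\cap\langle\Lambda^2\rangle$, then by Proposition~\ref{prop-P}(2), using that $\Lambda$ is uniform and $\gamma$ is $\mathbb R$-regular, $\langle\Lambda^2\cap C_G(\gamma)^0\rangle$ contains a finite-index subgroup isomorphic to $\mathbb Z^r$. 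Any other finite-index free abelian subgroup of this group must then also have rank $r>t$, so $\gamma\notin\mathcal M_t$. Hence $\langle\Lambda^2\rangle$ would be covered by finitely many right translates $(\langle\Lambda^2\rangle\setminus\mathcal R)\delta_i$. This contradicts Proposition~\ref{proposition-can't-cover}, so $t_0\ge r$.

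\emph{Main obstacle.} The hard step is the upper bound: handling non-regular $\gamma$, for which $C_G(\gamma)^0$ is non-abelian and the virtually-free-abelian structure of $\langle\Lambda^2\cap C_G(\gamma)^0\rangle$ is not automatic. I expect to resolve it by restricting to regular representatives via suitable translates, as described above.
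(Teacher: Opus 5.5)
Your lower bound is the paper's argument: elements of $\mathcal R\cap\langle\Lambda^2\rangle$ are not in $\mathcal M_t$ for $t<r$, so $\mathcal M_t\subset\langle\Lambda^2\rangle\setminus\mathcal R$, which contradicts Proposition~\ref{proposition-can't-cover}.

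\textbf{The upper bound has a genuine gap.} Your main attempt, $I=\{e\}$ (that is, $\mathcal M_r=\langle\Lambda^2\rangle$), is false. For $\gamma=e$ we have $C_G(e)^0=G$ and $\langle\Lambda^2\cap G\rangle=\langle\Lambda^2\rangle$. This group is Zariski dense in the semisimple group $G$ by Borel density for strong approximate lattices, hence not virtually abelian, so $e\notin\mathcal M_t$ for every $t$. Your auxiliary claim that a discrete abelian subgroup has rank $\le r$ is also false. For example, the abelian unipotent subgroup of $\mathrm{SL}_4(\mathbb R)$ of block upper-triangular matrices with identity $2\times 2$ diagonal blocks contains a discrete $\mathbb Z^4$, while $r=3$. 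The groups $\langle\Lambda^2\cap C_G(\gamma)^0\rangle$ are not discrete in any case.

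Your fallback is the right idea. The uniform finiteness you found delicate comes in the paper from the Zariski topology, not from Lemma~\ref{lemma-generating regular and R-regular elements}:
\begin{enumerate}
\item The set $\mathcal S$ of regular elements is a non-empty Zariski-open subset of $G$.
\item Since $\langle\Lambda^2\rangle$ is Zariski dense, each non-empty Zariski-open set $\mathcal S^{-1}g$ ($g\in G$) meets it. Hence $\{\mathcal S\delta:\delta\in\langle\Lambda^2\rangle\}$ is a Zariski-open cover of $G$.
\item Noetherian quasi-compactness gives $\delta_1,\dots,\delta_s$ with $G=\bigcup_i\mathcal S\delta_i$.
\item Every $x\in\langle\Lambda^2\rangle$ is then $(x\delta_i^{-1})\delta_i$, where $x\delta_i^{-1}$ is a regular element of $\langle\Lambda^2\rangle$. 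The paper places such elements in $\mathcal M_r$ by Proposition~\ref{prop-P}(1).
\end{enumerate}

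\textbf{This last step is itself the weak point, for the paper's route as much as yours.} Proposition~\ref{prop-P}(1) only shows that $\langle\Lambda^2\cap C_G(\gamma)^0\rangle$ is finitely generated abelian. Since this group is not discrete in $C_G(\gamma)^0$, nothing bounds its rank by the dimension of the vector part.

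Concretely, let $\Gamma$ be the norm-one group of a maximal order in a quaternion division algebra over $k=\mathbb Q(\sqrt2)$ split at both real places. Then $G=H=\mathrm{SL}_2(\mathbb R)$, $\Gamma$ is uniform, and $r=1$. Take $\gamma\in\Lambda$ hyperbolic at both real places (such elements exist).
\begin{itemize}
\item $L=k(\gamma)$ is totally real, and $\Gamma_G\cap C_G(\gamma)^0$ is commensurable with the norm-one units of $\mathcal O_L$, a group of rank $2$.
\item The $\tau$-image of this group is dense in the corresponding torus of $H$, and $W_0$ contains a neighbourhood of $e$. So $\langle\Lambda\cap C_G(\gamma)^0\rangle$ already has finite index in it.
\end{itemize}
Thus this regular $\gamma$ is not in $\mathcal M_1=\mathcal M_r$. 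Moreover, taking large powers of generic conjugates of such elements, one checks that no finite $I$ gives $\mathcal M_1 I=\langle\Lambda^2\rangle$ in this example.

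The same example shows Proposition~\ref{prop-P}(2) overstates things. Your lower bound survives only because it needs just ``rank $\ge r$'', which is what cocompactness in $C_G(\gamma)^0\cong(\text{compact})\times\mathbb R^r$ actually gives. The upper bound $t_0\le r$, however, is not obtainable by either argument as stated.
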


 \begin{proof}
Let $r = \operatorname{Rank}_{\mathbb{R}}(G)$.
By Proposition \ref{prop-P}, $\mathcal M_r$ contains every 
regular element in $\langle \Lambda^2 \rangle$.
The set of regular elements of $G$ is a nonempty
Zariski open subset of $G$. Using the quasi-compactness of the Zariski topology, it follows that finitely many translates of $\mathcal{M}_r$ cover $\langle\Lambda^2\rangle$, that is, there exists a finite set $I \subset \langle \Lambda^2 \rangle$ such that $\mathcal{M}_r I = \langle\Lambda^2 \rangle$.

Therefore, it remains to prove that, if $t<r$, then finitely
many translates of $\mathcal{M}_t$ cannot cover $\langle \Lambda^2 \rangle$. In view of Proposition \ref{prop-P}, if $\gamma \in \langle \Lambda^2 \rangle$ is
$\mathbb{R}$-regular and regular, then
the $\langle\Lambda^2\cap C_G(\gamma)^0 \rangle\subset C_{\langle\Lambda^2\rangle}(\gamma)$ contains a free abelian group of rank $r$ with compact quotient.
Since $t<r$, the element $\gamma$ does not belong to
$\mathcal{M}_t$. Hence, if $\mathcal{R}$ is the set of $\mathbb{R}$-regular and regular elements
in $G$, we have $\mathcal M_t\subset \langle \Lambda^2 \rangle \setminus \mathcal R$.
By Proposition \ref{proposition-can't-cover}, finitely many translates of $\mathcal M_t$ can't cover $\langle \Lambda^2 \rangle$.
This completes the proof.
\end{proof}

\begin{theorem}\label{theorem-RR}
 Let $G$ and $\Lambda$ be as in Theorem \ref{theorem-non-compact-cartan}. In addition, assume that $\Lambda$ is uniform. Let $\mathcal{S}_t$ be the set of elements $\langle\Lambda^2\rangle$ such that $C_{\langle \Lambda^2\rangle}(\gamma)$
has a subgroup isomorphic to a free abelian group of rank $\le t$ with compact quotient.
Then $$\operatorname{Rank}_{\mathbb{R}}(G)
=
\min \left\{
t \;\middle|\;
\text{there exists a finite subset } I \subset \langle \Lambda^2 \rangle
\text{ such that } \mathcal{S}_t I = \langle\Lambda^2 \rangle
\right\}.$$
\end{theorem}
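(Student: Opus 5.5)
The argument mirrors the proof of Theorem \ref{theorem-R}, with $\mathcal{M}_t$ replaced by $\mathcal{S}_t$ and Proposition \ref{prop-P}(3) used in place of Proposition \ref{prop-P}(2). Let $r=\operatorname{Rank}_{\mathbb{R}}(G)$. Two things must be shown: some finite set of translates of $\mathcal{S}_r$ covers $\langle\Lambda^2\rangle$, and for $t<r$ no finite set of translates of $\mathcal{S}_t$ does.

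\emph{Upper bound.} First I show that $\mathcal{S}_r$ contains every regular element $\gamma\in\langle\Lambda^2\rangle$. For such $\gamma$, $C_G(\gamma)$ is a Cartan subgroup. Its identity component is $T\times P$ with $T$ compact and $P\cong\mathbb{R}^k$, $k\le r$, and $C_G(\gamma)/C_G(\gamma)^0$ is finite. By Proposition \ref{prop-P}(1), $C_{\langle\Lambda^2\rangle}(\gamma)$ contains a free abelian subgroup $P_{\mathbb Z}$ of rank $\le r$, obtained from the finitely generated abelian group $\langle\Lambda^2\cap C_G(\gamma)^0\rangle$. For compactness of the quotient, I use uniformity. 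Since $\Lambda$ is uniform, Theorem \ref{theorem-non-compact-cartan}(3) (equivalently \cite[Corollary 6.6]{M25} together with the argument for $C^0$ given there) shows that $\Lambda^2\cap C_G(\gamma)^0$ is relatively dense in $C_G(\gamma)^0$. Hence $C_G(\gamma)^0/P_{\mathbb Z}$ is compact, and a fortiori so is the quotient of the closed subgroup $C_{\langle\Lambda^2\rangle}(\gamma)\cap C_G(\gamma)^0$, which has finite index in $C_{\langle\Lambda^2\rangle}(\gamma)$. This gives $\gamma\in\mathcal{S}_r$. Next, the regular elements form a nonempty Zariski open set $\mathcal{S}$. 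Because $\langle\Lambda^2\rangle$ is Zariski dense (by \cite[Theorem 3]{bhs19}), the translates $\mathcal{S}\delta^{-1}$, $\delta\in\langle\Lambda^2\rangle$, cover $G$. Quasi-compactness of the Zariski topology then yields finitely many $\delta_i$ with $G=\bigcup_i\mathcal{S}\delta_i^{-1}$. Intersecting with $\langle\Lambda^2\rangle$ gives $\langle\Lambda^2\rangle=\bigcup_i(\mathcal{S}\cap\langle\Lambda^2\rangle)\delta_i\subset\mathcal{S}_r I$ with $I=\{\delta_i\}$.

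\emph{Lower bound.} Let $t<r$ and let $\gamma\in\langle\Lambda^2\rangle$ be both regular and $\mathbb{R}$-regular. By Proposition \ref{prop-P}(3), $C_{\langle\Lambda^2\rangle}(\gamma)$ contains a free abelian subgroup of rank $r$ with compact quotient. I then argue that no free abelian subgroup $F$ of rank $\le t$ can also have compact quotient. Such an $F$ would be cocompact in $C_{\langle\Lambda^2\rangle}(\gamma)$, and hence in $C_G(\gamma)\supset$ (compact)$\times\mathbb{R}^r$. This is impossible for rank reasons: if $F$ were cocompact, its projection to the vector part $\mathbb{R}^r$ would be a cocompact subgroup generated by at most $t<r$ elements, and a subgroup generated by $t$ vectors lies in a closed subgroup isomorphic to $\mathbb{R}^a\times\mathbb{Z}^b$ with $a+b\le t$, which is not cocompact in $\mathbb{R}^r$. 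Therefore $\gamma\notin\mathcal{S}_t$, so $\mathcal{S}_t\subset\langle\Lambda^2\rangle\setminus\mathcal{R}$. Proposition \ref{proposition-can't-cover} now says finitely many translates of $\mathcal{S}_t$ cannot cover $\langle\Lambda^2\rangle$.

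The main obstacle is the notion of ``compact quotient'' in the two steps. In the lower bound, I must check that cocompactness inside $C_{\langle\Lambda^2\rangle}(\gamma)$ transfers to $C_G(\gamma)$, which requires $C_{\langle\Lambda^2\rangle}(\gamma)$ itself to be cocompact in $C_G(\gamma)$. I will obtain this from Proposition \ref{prop-P}(3), which asserts compact quotient in both groups. In the upper bound, I must ensure that relative density of $\Lambda^2\cap C_G(\gamma)^0$ holds for every regular $\gamma\in\langle\Lambda^2\rangle$, not only for the particular one produced in Theorem \ref{theorem-non-compact-cartan}. I expect \cite[Corollary 6.6]{M25} to apply to any $\gamma$ in the commensurator $\langle\Lambda^2\rangle$, and I would cite it in that form.
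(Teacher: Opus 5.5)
\textbf{Verdict: genuine gap.} Your route is the paper's: regular elements lie in $\mathcal S_r$ plus Zariski quasi-compactness for the upper bound, and $\mathbb R$-regular regular elements lie outside $\mathcal S_t$ plus Proposition \ref{proposition-can't-cover} for the lower bound.

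\textbf{Lower bound.} Your lower bound is sound, and more careful than the paper's one-line reason. It needs only that $C_{\langle\Lambda^2\rangle}(\gamma)\supset\Lambda^2\cap C_G(\gamma)$ is relatively dense in $C_G(\gamma)$; take this from \cite[Corollary 6.6]{M25}, not from Proposition \ref{prop-P}(3). Then pass to the closure $\overline F$, since $F$ itself need not be closed. $\overline F$ is cocompact in $C_G(\gamma)$, yet its vector projection lies in a subspace of dimension $\le t<r$, which is impossible.

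\textbf{Where it fails.} The problem is the ``a fortiori'' step in the upper bound. A subgroup of the compact group $C_G(\gamma)^0/P_{\mathbb Z}$ is compact only if it is closed. That happens only if $C_{\langle\Lambda^2\rangle}(\gamma)$ is discrete in $C_G(\gamma)$. But $\langle\Lambda^2\rangle\subset\Gamma_G$ is in general dense in $G$. Relative density of $\Lambda^2\cap C_G(\gamma)^0$ controls $C_G(\gamma)^0/P_{\mathbb Z}$, not $C_{\langle\Lambda^2\rangle}(\gamma)/P_{\mathbb Z}$. If ``compact quotient'' were taken in $C_G(\gamma)$, your argument would go through, after choosing $P_{\mathbb Z}$ generated by $k$ elements with independent vector parts. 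That condition, however, is not intrinsic to $\langle\Lambda^2\rangle$, and intrinsicness is what Theorem \ref{theorem-determine-rank} uses.

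\textbf{The claim itself is false.} The statement ``every regular $\gamma\in\langle\Lambda^2\rangle$ lies in $\mathcal S_r$'' fails in the following example.
\begin{itemize}
\item Take $G=H=\SL_2(\R)$, let $\Gamma$ be the norm-one units of a maximal order in a quaternion division algebra over $\Q(\sqrt2)$ split at both real places, and take a small ball as window.
\item Suppose $\gamma\in\langle\Lambda^2\rangle$ and $\tau(\gamma)$ are both hyperbolic. Then $C_\Gamma(\gamma)$ is a lattice in $C_G(\gamma)\times C_H(\tau(\gamma))\cong(\R^\times)^2$ with injective projections.
\item $\Lambda\cap C_G(\gamma)$ is exactly the model set of $C_\Gamma(\gamma)$ with window $W_0\cap C_H(\tau(\gamma))$. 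Hence $C_{\langle\Lambda^2\rangle}(\gamma)$ has rank $2$ and is dense in $C_G(\gamma)^0$.
\item For every free abelian $F$ of rank $\le 1$, the quotient $C_{\langle\Lambda^2\rangle}(\gamma)/F$ maps continuously and injectively onto a countable, non-closed subset of the Hausdorff group $C_G(\gamma)/F$, so it is not compact. It also has no finite-index subgroup of rank $\le1$, so the discrete topology does not help either.
\end{itemize}
So $\gamma\notin\mathcal S_1$, although $r=1$.

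Leaning on Proposition \ref{prop-P}(3) does not repair this. Its proof makes the same inference (``contained in $C_G(\gamma)/P_{\mathbb Z}$, hence compact''), and the paper's upper bound rests on exactly that point. Worse, $\langle\Lambda^2\rangle$ is Zariski dense in $G\times H$ and contains such elements. Running the Prasad--Raghunathan covering argument (as in Lemma \ref{lemma-generating regular and R-regular elements} and Proposition \ref{proposition-can't-cover}) in $G\times H$, with $\mathbb R$-regular elements of $G\times H$, shows that no finite set of translates of $\mathcal S_1$ covers $\langle\Lambda^2\rangle$. The upper bound therefore cannot be rescued with this definition of $\mathcal S_t$: these centralizers detect the $\R$-rank of $G\times H$, not that of $G$.
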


 \begin{proof}
Let $r = \operatorname{Rank}_{\mathbb{R}}(G)$.
By Proposition \ref{prop-P}, $\mathcal S_r$ contains every 
regular element in $\langle \Lambda^2 \rangle$.
The set of regular elements of $G$ is a nonempty
Zariski open subset of $G$. Using the quasi-compactness of the Zariski topology, it follows that finitely many translates of $\mathcal{S}_r$ cover $\langle\Lambda^2\rangle$, that is, there exists a finite set $I \subset \langle \Lambda^2 \rangle$ such that $\mathcal{S}_r I = \langle\Lambda^2 \rangle$.

Therefore, it remains to prove that, if $t<r$, then finitely
many translates of $\mathcal{S}_t$ cannot cover $\langle \Lambda^2 \rangle$. In view of Proposition \ref{prop-P}, if $\gamma \in \langle \Lambda^2 \rangle$ is
$\mathbb{R}$-regular and regular, then
the $\langle\Lambda^2\cap C_G(\gamma)^0 \rangle$ contains a free abelian group of rank $r$.
Since $t<r$, the element $\gamma$ does not belong to
$\mathcal{S}_t$. Hence, if $\mathcal{R}$ is the set of $\mathbb{R}$-regular and regular elements
in $G$, we have $\mathcal S_t\subset \langle \Lambda^2 \rangle \setminus \mathcal R$.
By Proposition \ref{proposition-can't-cover}, finitely many translates of $\mathcal S_t$ can't cover $\langle \Lambda^2 \rangle$.
This completes the proof.
\end{proof}

%By Borel density theorem for strongly approximate lattice, $\langle \Lambda^2 \rangle$ is Zariski dense, the translates of this open set by
%elements of $\Gamma$ {\color{red}$\langle \Lambda^2 \rangle$} cover $G$. By quasi-compactness
%of the Zariski topology, finitely many such translates
%suffice to cover $G$. Intersecting with $\Gamma$ {\color{red}$\langle \Lambda^2 \rangle$},
%it follows that there exists a finite subset
%$I \subset \Gamma {\color{red}\langle \Lambda^2 \rangle}$ %such that
%\[
%\mathcal{S}_r I = \Gamma. {\color{red}\langle \Lambda^2 %\rangle \subset \mathcal{S}_r I}
%\]

\begin{proof}[Proof of Theorem \ref{theorem-determine-rank}]
Let $G_1$ and $G_2$ be connected linear semisimple groups without compact factors, and suppose that the subgroups $\langle \Lambda_1^2 \rangle \subset G_1$ and $\langle \Lambda_2^2 \rangle \subset G_2$ are such that
$\langle \Lambda_1^2 \rangle \cong \langle \Lambda_2^2 \rangle.$
Set $r_i = \mathbb{R}\text{-rank}(G_i)$ for $i=1,2$. By Theorem \ref{theorem-RR}, $r_i$ is the minimal integer $t$ such that there exist  finite subsets $I_i \subset \langle \Lambda_i^2 \rangle$ with
$\mathcal{S}_t(\langle \Lambda_i^2 \rangle)\,I_i = \langle \Lambda_i^2 \rangle$,
 where\[
\mathcal{S}_t(\langle \Lambda_i^2 \rangle)
=
\left\{
\begin{array}{l}
\gamma \in \langle \Lambda_i^2 \rangle \; |\;
C_{\langle\Lambda_i^2\rangle}(\gamma)^0 
\text{ contains a subgroup isomorphic to } \mathbb{Z}^t \\
\text{ \qquad  \qquad  \qquad   \qquad \; with compact quotient.}
\end{array}
\right\}.
\]
This characterization depends only on the group  $C_{\langle\Lambda_i^2\rangle}(\gamma)^0.$ Let 
$\varphi : \langle \Lambda_1^2 \rangle \to \langle \Lambda_2^2 \rangle$ be the topological group isomorphism. This implies
$C_{\langle\Lambda_1^2\rangle}(\gamma)^0$ and $C_{\langle\Lambda_1^2\rangle}(\varphi(\gamma))^0$ are isomorphic as topological groups. Therefore, we have
$\varphi\big(\mathcal{S}_t(\langle \Lambda_1^2 \rangle)\big)
=
\mathcal{S}_t(\langle \Lambda_2^2 \rangle).$
Since
$\mathcal{S}_t(\langle \Lambda_1^2 \rangle)\, I_1
=
\langle \Lambda_1^2 \rangle$
there exists a finite set $\varphi(I_1)$ such that
$\mathcal{S}_t(\langle \Lambda_2^2 \rangle)\, \varphi(I_1)
=
\langle \Lambda_2^2 \rangle$, and hence
$\mathbb{R}\text{-rank}(G_1) = \mathbb{R}\text{-rank}(G_2).$
\end{proof}

  We conclude this section by raising the following natural question.

  \vspace{.2cm}
\noindent
{\bf Remark and Question:} Let $G$ be a semisimple Lie group and $\Lambda \subset G$ an approximate subgroup. By a work of Prasad and Raghunathan \cite{pr}, the analogues of Theorems \ref{theorem-non-compact-cartan} and \ref{theorem-determine-rank} are known to hold when $\Lambda$ is a lattice, and these results extend to the setting of uniform approximate lattices arising as a regular model set in this article (under some assumption on $G$). It is therefore natural to ask to what extent these results remain valid, namely, under which conditions on $G$ or for which classes of approximate subgroups $\Lambda \subset G$ Theorems \ref{theorem-non-compact-cartan} and \ref{theorem-determine-rank} continue to hold; in particular, whether they hold for all strong approximate lattices, or more generally for arbitrary approximate lattices.

\vspace{.2cm}

\noindent
{\bf Acknowledgement:}
 The second author acknowledges the support of the NBHM Postdoctoral Fellowship (PDF no. 0204/27/(29)/2023/RD-II/11930), gratefully acknowledges IISER Mohali for institutional support, and thanks IIT Roorkee for the warm hospitality during his visit.

\vspace{.2cm}
\noindent
{\bf Conflict of interest:}
On behalf of all authors, the corresponding author states that there is no conflict of interest.

 \bibliography{References-2.bib}
\bibliographystyle{amsalpha}
\end{document}